\def\e{\epsilon}
\newcommand{\mB}{\mathcal{B}}
\newcommand{\mE}{\mathcal{E}}
\newcommand{\mF}{\mathcal{F}}
\newcommand{\mG}{\mathcal{G}}
\newcommand{\mM}{\mathcal{M}}
\newcommand{\mN}{\mathcal{N}}
\newcommand{\mP}{\mathcal{P}}
\newcommand{\mS}{\mathcal{S}}
\newcommand{\mW}{\mathcal{W}}
\newcommand{\fm}{\mathfrak{m}}
\newcommand{\fM}{\mathfrak{M}}
\newcommand{\fp}{\mathfrak{p}}
\newcommand{\fq}{\mathfrak{q}}
\newcommand{\fX}{\mathfrak{X}}
\newcommand{\bfC}{\mathbf{C}}
\newcommand{\bfF}{\mathbf{F}}
\newcommand{\bfQ}{\mathbf{Q}}
\newcommand{\bfT}{\mathbf{T}}
\newcommand{\bfZ}{\mathbf{Z}}
\newcommand{\Oo}{\mathcal{O}}
\newcommand{\OF}{\mathcal{O}_F}
\newcommand{\OL}{\mathcal{O}_L}
\newcommand{\ov}{\overline}
\newcommand{\be}{\begin{equation}}
\newcommand{\ee}{\end{equation}}
\newcommand{\bes}{\begin{equation*}}
\newcommand{\ees}{\end{equation*}}
\newcommand{\bs}{\begin{split}}
\newcommand{\es}{\end{split}}
\newcommand{\bss}{\begin{split*}}
\newcommand{\ess}{\end{split*}}
\newcommand{\bmat}{\left[ \begin{matrix}}
\newcommand{\emat}{\end{matrix} \right]}
\newcommand{\bsmat}{\left[ \begin{smallmatrix}}
\newcommand{\esmat}{\end{smallmatrix} \right]}
\newcommand{\bml}{\begin{multline}}
\newcommand{\eml}{\end{multline}}
\newcommand{\bmls}{\begin{multline*}}
\newcommand{\emls}{\end{multline*}}
\DeclareMathOperator{\Cl}{Cl}
\DeclareMathOperator{\Frob}{Frob}
\DeclareMathOperator{\Gal}{Gal}
\DeclareMathOperator{\GL}{GL}
\DeclareMathOperator{\Hom}{Hom}
\DeclareMathOperator{\Spec}{Spec}
\DeclareMathOperator{\val}{val}
\newcommand{\R}{\textup{Res}_{K/\mathbf{Q}}\hspace{2pt}}
\newcommand{\hs}{\hspace{2pt}}
\newcommand{\tr}{\textup{tr}\hspace{2pt}}
\theoremstyle{plain}
\newtheorem{thm}{Theorem}
\newtheorem{prop}[thm]{Proposition}
\newtheorem{cor}[thm]{Corollary}
\newtheorem{lemma}[thm]{Lemma}
\theoremstyle{definition}
\newtheorem{definition}[thm]{Definition}
\newtheorem{rem}[thm]{Remark}
\numberwithin{thm}{section}
\numberwithin{equation}{section}
\author{Tobias Berger and Krzysztof Klosin}
\title{$R=T$ theorems for weight one modular forms}
\begin{document}
 
\thanks{The first author's research was supported by the EPSRC Grant EP/R006563/1. The second author was supported by a Collaboration for Mathematicians Grant \#578231  from the Simons Foundation
 and by a PSC-CUNY award jointly funded by the Professional Staff Congress and the City
University of New York.}

\begin{abstract} We prove modularity of certain residually reducible ordinary 2-dimensional $p$-adic Galois representations with determinant a finite order odd character $\chi$. For certain non-quadratic $\chi$ we prove an $R=T$ result for $T$ the weight 1 specialisation of the Hida Hecke algebra acting on non-classical weight 1 forms, under the assumption that  no two Hida families congruent to an Eisenstein series cross in weight 1. For quadratic $\chi$ we prove that the quotient of $R$ corresponding to deformations split at $p$ is isomorphic to the Hecke algebra acting on classical CM weight 1 modular forms. \end{abstract}
\maketitle

\section{Introduction}
 Let $\rho: G_{\bfQ} \to \GL_2(\ov{\bfQ}_p)$ be a Galois representation unramified outside a finite set $\Sigma$ of primes with $p \in \Sigma$ which is residually reducible, $p$-distinguished and ordinary at $p$. Suppose that $\det \rho = \chi \epsilon^{k-1}$ where $\chi$ is a finite order character, $\epsilon$ is the $p$-adic cyclotomic character and $k$ is a positive integer (such that $\chi\epsilon^{k-1}$ is odd) and that  the associated residual representation has semi-simplification $1\oplus\ov{\chi} \ov{\epsilon}^{k-1}$. If $k\geq 2$, the modularity of such representations by modular forms of weight $k$ was proved by Skinner and Wiles \cite{SkinnerWiles97, SkinnerWiles99} (recently generalised by Lue Pan \cite{Pan21} to the non-ordinary case). The case of $k=1$ is different because while one can still expect that the Galois representations should arise from weight one modular forms, in general not all such forms are classical, i.e., there are purely $p$-adic weight one ordinary modular forms. This phenomenon was first observed by Mazur and Wiles \cite{MazurWilesCompositio}.

 In this article we prove the first modularity theorem for residually reducible Galois representations with $k=1$ where the Galois representations in question are modular but not necessarily by a classical modular  form of weight one.  In fact, it was shown by Dummigan and Spencer \cite{DummiganSpencer} that if $\chi$ is not quadratic there are no classical modular forms of weight 1 whose associated residual Galois representation has semi-simplification $1 \oplus \ov{\chi}$ (see Remarks \ref{CM1} and  \ref{rem5.5}). In that case we prove a modularity theorem by purely $p$-adic weight 1 modular forms. If $\chi$ is quadratic we prove a modularity theorem by classical weight 1 modular forms with complex multiplication.

While we follow a well-established approach of identifying an appropriate deformation ring with a Hecke algebra, if the character is not quadratic we introduce some novel elements into the method which considerably shift the focus of the approach to dealing with some new challenges. In particular, we need to work with limits of modular Galois representations which, as opposed to the representations attached to forms of weight greater than or equal to 2,  are not automatically irreducible. The Hecke algebra in question is not a Hecke algebra acting on the space of classical weight one modular forms, but rather a certain quotient of the $\Lambda$-adic Hecke algebra which considerably complicates getting an ``$R\to T$''-map. The proof of the principality of the ideal of reducibility is also new. The key input on the automorphic side is Wiles' result from \cite{Wiles90} on $\Lambda$-adic Eisenstein congruences. 

On the other hand, if $\chi$ is quadratic we directly establish sufficiently many Eisenstein congruences of classical cusp forms of weight 1. To ensure modularity by classical forms we are also forced to work with a stronger deformation condition at $p$ (see Corollary \ref{lastcorollary}).

Let us now explain the contents of this paper in more detail.
Let $E$ be a finite extension of $\bfQ_p$ with integer ring $\Oo$, uniformizer $\varpi$ and residue field $\bfF$.  Let $G_{\Sigma}$ be the Galois group of the maximal Galois extension of $\bfQ$ unramified outside $\Sigma$. Let $\chi: G_{\Sigma} \to \Oo^{\times}$ be an odd Galois character associated with a Dirichlet character mod $Np$ whose $N$-part is primitive and let $\ov{\chi}: G_{\Sigma} \to \bfF^{\times}$ be its mod $\varpi$ reduction. We assume that  $\ov{\chi}|_{D_p}\neq 1$. In particular, we allow $\ov{\chi}$ to be unramified at $p$. Let $$\rho_0: G_{\Sigma} \to \GL_2(\bfF), \quad \rho_0=\bmat 1&*\\ 0& \ov{\chi}\emat\not\cong 1\oplus \ov{\chi}$$ be a continuous homomorphism. We study deformations $\rho$ of $\rho_0$ which are ordinary at $p$ (for precise definition see section \ref{deformationproblem}), and such that $\rho|_{I_{\ell}}=1\oplus \chi$ for all $\ell \in \Sigma$ with $\ell \equiv 1 $ mod $p$. We furthermore require that $\det \rho =\chi$, i.e., that $k=1$. Such a deformation problem is representable by a universal deformation ring $R$.  We then also study the deformation problem with the (stronger) assumption that $\rho|_{D_p}$ is split with corresponding universal deformation ring $R^{\rm split}$. We refer to these two cases as  ``ordinary'' and ``split''. 

We do not use the Taylor-Wiles method. Instead, we prove that there is a surjection from $R$ to a suitable Hecke algebra $T$, then show that reducible deformations are modular by demonstrating that $R/I \cong T/J$ for $I$ the reducibility ideal and $J$ the Eisenstein ideal. After establishing the principality of $I$ we then use the commutative algebra criterion from \cite{BergerKlosin11} to deduce $R=T$. For $R^{\rm split}$ we use a similar approach. 

However, to implement this general strategy we use very different routes in the ordinary and the split case. This dichotomy reflects the fact that in the first case we need to deal with non-classical, while in the latter one with  classical forms.

Let us first discuss the ordinary case.
In that case we work with Wiles' $\Lambda$-adic Hecke algebra $\bfT$ and consider a certain quotient $\bfT_1$ of it - its specialisation at weight one. More precisely, we take a localisation $\bfT_{1,\fm}$ of $\bfT_1$ at a maximal ideal corresponding to forms congruent to a weight 1 specialisation of a certain $\Lambda$-adic Eisenstein series $\mE$. 

To construct a map from $R$ to $\bfT_{1,\fm}$ we need to show that for each weight one specialisation $\mF_1$ of a Hida family $\mF$ congruent to $\mE$ there exists a lattice inside the Galois representation $\rho_{\mF_1}$ associated with $\mF_1$ which reduces to $\rho_0$. We do this, as is usually the case, by utilizing a result known as Ribet's Lemma, but this in turn requires irreducibility of $\rho_{\mF_1}$. For classical forms, this is a theorem of Deligne-Serre, but our $\mF_1$ need not be classical. As $\mF_1$ is a $p$-adic limit of classical forms in higher weights, and these have irreducible Galois representations,  the question becomes that  of proving that this irreducibility is preserved in the limit.  In general, of course, a limit of irreducible representations may be reducible. Here however, we show that in this context this does not happen, which is one of the key technical results of the paper (Theorem \ref{irreducibility}).

This  gives us a map from $R$ to $\prod_{\mF} \Oo$ where the product runs over all $\Lambda$-adic newforms congruent to $\mE$. A standard approach is then to identify the Hecke algebra with a subalgebra of such a product. Here however we encounter another problem, as $\bfT_{1,\fm}$ is  just an abstract quotient of the $\Lambda$-adic Hecke algebra and does not in general inject into $\prod_{\mF} \Oo$. The obstruction to this injection occurs when two Hida families congruent to $\mE$ cross at weight one. Let us note here that under our assumptions this cannot happen if the weight one specialisations are classical by a result of Bella{\"\i}che-Dimitrov (indeed, in their terminology we are in the regular case and cannot have real multiplication, so they prove that the eigencurve is \'etale at the corresponding point).

As the second key result of this article (Proposition \ref{inject3}), we prove that the lack of such crossings (i.e., the \'etaleness of the eigencurve at weight one) is indeed also a sufficient condition for having the desired injection in the general (also non-classical) case. In section \ref{Examples} we discuss examples when the non-crossing condition is satisfied.

Once we have a (surjective) map from $R$ to $\bfT_{1,\fm}$, we prove that it descends to an isomorphism $R/I\to \bfT_{1,\fm}/J_1$ roughly following the method of \cite{BergerKlosin13}, which boils down to bounding the orders of $R/I$ and $\bfT_{1,\fm}/J_1$. Here $I$ is the reducility ideal and $J_1$ is 
the (weight one specialisation of the) Eisenstein ideal. To bound $\bfT_{1.\fm}/J_1$ from below we use a theorem of Wiles from his proof of the Main Conjecture in \cite{Wiles90} which gives such a bound on the Eisenstein quotient of the $\Lambda$-adic Hecke algebra. For the corresponding upper bound on $R/I$ let us only mention that  we need to assume the cyclicity of the $\chi^{-1}$-eigenspace of $\Cl(\bfQ(\chi))$. A similar condition has been applied in various situations by Skinner-Wiles \cite{SkinnerWiles97},  the authors \cite{BergerKlosin13, BergerKlosin20}, and by Wake--Wang--Erickson \cite{WWE20}. For a full list of assumptions see section \ref{The residual representation}. 

To conclude that $R\cong \bfT_{1,\fm}$ we utilize the commutative algebra criterion from \cite{BergerKlosin11}, but to apply it we need to show that $I$ is a principal ideal. This is another major technical result of the paper that is needed in both the ordinary and the split case and uses different conditions in the two cases (Theorem \ref{principality}). In fact, the condition needed in the ordinary case excludes quadratic characters (see Remark \ref{exclusions}), so that case concerns exclusively non-classical forms. Hence for us the ordinary and the split case are in fact disjoint.

To complete our treatment of the modularity of residually reducible Galois representations with determinant a finite order character $\chi$ we prove in Theorem \ref{CMresult} that $R^{\rm split}=(\bfT_1^{\rm class})_\fm$ when $\chi$ is the character corresponding to an imaginary quadratic extension $F/\bfQ$ and $p$ is inert in $F/\bfQ$ and divides the class number of $F$. Here $(\bfT_1^{\rm class})_\fm$ is the localisation at the Eisenstein maximal ideal of the Hecke algebra acting on weight 1 classical cusp forms of level $d_F$ with complex multiplication. Whilst the usual methods for proving Eisenstein congruences do not apply in this case, it turns out that there is a very direct link here between elements of the Selmer group bounding $R^{\rm split}/I^{\rm split}$ and cusp forms congruent to the corresponding weight 1 Eisenstein series. To establish the required lower bound on the congruence module $(\bfT_1^{\rm class})_\fm/J$ we can therefore count the total depth of Eisenstein congruences provided by CM forms and apply the result of \cite{BergerKlosinKramer14}. This, in turn, requires us to know the principality of $J$ which we deduce from the principality of $I^{\rm split}$.

If $f$ is a classical weight one modular form, then by Deligne-Serre its Galois representation has finite image. However, there is no a priori reason why this should be true of an arbitrary deformation of $\rho_0$. In particular, if $f$ is classical and ordinary then $\rho_f|_{D_p}$ must be split (as it must be of finite order). Conjecturally this happens only for classical weight 1 forms. 
Under our assumptions we note that for $\chi$ unramified at $p$ our result $R^{\rm split}=(\bfT_1^{\rm class})_\fm$ (Theorem \ref{CMresult}) establishes the following equivalence (see Corollary \ref{lastcorollary}): an ordinary deformation $\rho:G_\Sigma \to \GL_2(\Oo)$ of $\rho_0$ is modular by a classical weight 1 form if and only if it is unramified at $p$ and $\chi$ is quadratic.

The modularity direction of this result is the analogue of that of Buzzard-Taylor \cite{BuzzardTaylor99} on the modularity of residually irreducible $p$-distinguished representations of $G_\Sigma$ that are unramified at $p$ (and therefore establishes another case of conjecture 5a in Fontaine-Mazur \cite{FontaineMazur95} that $p$-adic representations of $G_\Sigma$ that are unramified at $p$ have finite image).

Theorem \ref{CMresult} also complements the work of Castella, Wang-Erickson and Hida \cite{CastellaWangErickson21} in the residually irreducible case on Greenberg's conjecture that $\rho_f$ is split at $p$ if and only if $f$ is CM.

Let us note that the problem considered in \cite{SkinnerWiles97} is a related one and while \cite{SkinnerWiles97} assume throughout their article that $k\geq 2$, it appears to us that it would possible to infer a certain modularity result in the weight one case from their isomorphism of the ordinary universal deformation ring with the ordinary Hecke algebra. Even given that, however, there are significant differences in the setup and method. The residual representation considered in \cite{SkinnerWiles97} has the form $$\bmat \chi & * \\ 0 & 1\emat,$$ so has the opposite order of the characters on the diagonal to our $\rho_0$. Using this, the authors construct a certain reducible characteristic zero deformation of it, but its uniqueness (necessary for their method) requires that $\chi$ be ramified at $p$. In our setup such a deformation does not exist, and consequently our approach is different (not only because \cite{SkinnerWiles97} work in weight 2 and we work directly in weight 1). In fact, while both \cite{SkinnerWiles97} and we  assume that the $\chi$-part of the class group of the splitting field $F$ of $\chi$ is trivial, we work with the ideal of reducibility of $\rho_0$ and this requires another cyclicity assumption, namely that the $\chi^{-1}$ part of the class group of $F$ has dimension at most 1. However, as a benefit we do not need to assume that $\chi$  is ramified at $p$. 
Such unramified characters can occur in this context as we demonstrate in section \ref{Examples}.

\subsection{Acknowledgements}
We would like to thank Chris Skinner for teaching us about Wiles' proof of the Main Conjecture in Michigan in 2002. We are also grateful to Adel Betina for helpful comments, in particular regarding section \ref{Examples}. Finally, we would like to thank Neil Dummigan and Joe Kramer-Miller for enlightening conversations related to the topics of this article.

\section{Selmer groups} 

Let $p$ be a prime. 
For $\Sigma$ a finite set of finite places of $\bfQ$ containing $p$ we write $G_{\Sigma}$ for the Galois group of the maximal extension of $\bfQ$ unramified outside of $\Sigma$ and infinity. For any prime $\ell$ we write $D_{_{\ell}}\subset G_\Sigma$ for  a  decomposition group at $\ell$
and $I_{\ell} \subset D_\ell$ for the inertia subgroup.

We fix an embedding $\ov{\bfQ}_p \hookrightarrow \bfC$. Let $E$ be a finite extension of $\bfQ_p$. Write $\Oo$ for the valuation ring of $E$, $\varpi$ for a choice of a uniformizer and $\bfF$ for the residue field.

 For $\psi: G_\Sigma \to \Oo^\times$ a  non-trivial
character of order prime to $p$ 
we consider the $p$-adic coefficients $M=E(\psi)$, $E/\Oo(\psi)$ or $(\Oo/\varpi^n)(\psi)$ for $n \geq 1$. We also write $\ov{\psi}: G_{\Sigma} \to \bfF^{\times}$ for the mod $\varpi$ reduction of $\psi$. \begin{rem} \label{invariants1}  Note that if $G$ is a subgroup of $G_{\Sigma}$ such that $\psi|_G\neq 1$, then $(E/\Oo)(\psi)^G=0$. Indeed, as the order of $\psi$ is prime to $p$ the image of $\psi$ is contained in the prime-to-$p$ roots of unity of $\Oo$ and so $\psi$ is the Teichmüller lift of $\ov{\psi}$. This guarantees that if $\psi|_G\neq 1$ then there exists $\sigma \in G$ such that $\psi(\sigma) \not \equiv 1$ mod $\varpi$. \end{rem}

Let $\Sigma' \subset \Sigma$. For  $M$ as above we define the Selmer group $H^1_{\Sigma'}(\bfQ, M)$ to be the subgroup of $H^1(G_{\Sigma}, M)$ 
$$H^1_{\Sigma'}(\bfQ, M)=\ker(H^1(G_{\Sigma}, M) \to  \prod_{\ell \in \Sigma \backslash \Sigma'} (H^1(\bfQ_\ell, M)/H^1_{\rm f}(\bfQ_\ell, M))),$$ where the local conditions are defined as follows:

For $M=E(\psi)$ we take for all primes $\ell$, including $p$,
$$H^1_{\rm f}(\bfQ_\ell, M)=H^1_{\rm ur}(\bfQ_\ell, M)={\rm ker}(H^1(\bfQ_{\ell},M) \to H^1(\bfQ_{\ell, \rm ur},M)),$$  where $\bfQ_{\ell, \rm ur}$ is the maximal unramified extension of  $\bfQ_{\ell}$.
This induces conditions for $M=(E/\Oo)(\psi)$ and $(\Oo/\varpi^n)(\psi)$ via
$$H^1_{\rm f}(\bfQ_\ell, E/\Oo(\psi))={\rm im}(H^1_{\rm ur}(\bfQ_\ell, E(\psi)) \to H^1(\bfQ_{\ell}, E/\Oo(\psi)))$$  and
$$H^1_{\rm f}(\bfQ_\ell, (\Oo/\varpi^n)(\psi))=i_n^{-1} H^1_{\rm f}(\bfQ_\ell, E/\Oo(\psi)) \text{ for } i_n: H^1(\bfQ_\ell, (\Oo/\varpi^n)(\psi)) \to H^1(\bfQ_\ell, E/\Oo(\psi))$$ the natural map induced by the canonical injection $(\Oo/\varpi^n)(\psi)\to E/\Oo(\psi)$. 

For $\ell \neq p$ \cite{Rubin00} Lemma 1.3.5(iii) tells us that $H^1_{\rm f}(\bfQ_\ell, E/\Oo(\psi)))=H^1_{\rm ur}(\bfQ_\ell, E/\Oo(\psi)))$ since $(E/\Oo)(\psi)^{I_\ell}$ is divisible 
as $\psi$ has order prime to $p$. Indeed, if $\psi$ is unramified then the invariants are isomorphic to $E/\Oo$ as $\Oo$-modules,  hence divisible. If $\psi$ is ramified then the invariants are zero by Remark \ref{invariants1}.
By  the same \cite{Rubin00} Lemma 1.3.5(iii) $H^1_{\rm f}(\bfQ_\ell, \Oo(\psi)))$ (defined as preimage of $H^1_{\rm f}(\bfQ_\ell, E(\psi))$) agrees with ${\rm im}(H^1_{\rm ur}(\bfQ_{\ell}, \Oo(\psi))$, which by the proof of \cite{Rubin00} Lemma 1.3.8 also gives $H^1_{\rm ur}(\bfQ_\ell, (\Oo/\varpi^n)(\psi)) = H^1_{\rm f}(\bfQ_\ell, (\Oo/\varpi^n)(\psi))$.

For $\ell=p$ we also have $H^1_{\rm f}(\bfQ_p, E/\Oo(\psi))=H^1_{\rm ur}(\bfQ_p, E/\Oo(\psi)),$ by the proof of \cite{Rubin00} Proposition 1.6.2 as the order of $\psi$ is coprime to $p$. 
In addition an easy diagram chase like in the proof of \cite{Rubin00} Lemma 1.3.5 for $H^1_{\rm f}(K,T)$ shows that \be \label{urf} H^1_{\rm ur}(\bfQ_p, (\Oo/\varpi^n)(\psi)) \subset H^1_{\rm f}(\bfQ_p, (\Oo/\varpi^n)(\psi)).\ee

By \cite{Rubin00} Lemma 1.5.4 and Lemma 1.2.2(i) we have \begin{equation} \label{functoriality} H^1_{\Sigma'}(\bfQ, (\Oo/\varpi^n)(\psi))=H^1_{\Sigma'}(\bfQ, (E/\Oo)(\psi))[\varpi^n]\end{equation} since $(E/\Oo)(\psi)^{G_\Sigma}=0$ by Remark \ref{invariants1}.

\begin{prop}[\cite{Rubin00} Proposition 1.6.2] \label{clgroup}
$$H^1_{\emptyset}(\bfQ, E/\Oo(\psi)) \cong \Hom({\rm Cl}(\bfQ(\psi)), E/\Oo(\psi))^{{\rm Gal}(\bfQ(\psi)/\bfQ)}$$
\end{prop}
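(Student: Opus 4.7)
The plan is to deduce this via inflation--restriction combined with class field theory, following Rubin's approach. Set $F = \bfQ(\psi)$, the splitting field of $\psi$. Since $\psi$ has order prime to $p$, the group $\Gal(F/\bfQ)$ has order prime to $p$ and $F/\bfQ$ is unramified outside $\Sigma$. Because $E/\Oo(\psi)$ is a $p$-primary $\Oo$-module, the higher cohomology $H^i(\Gal(F/\bfQ),-)$ vanishes for $i>0$, so restriction gives an isomorphism
$$H^1(G_\Sigma, E/\Oo(\psi)) \xrightarrow{\sim} H^1(G_{F,\Sigma_F}, E/\Oo(\psi))^{\Gal(F/\bfQ)},$$
where $\Sigma_F$ is the set of primes of $F$ above those in $\Sigma$, and $G_{F,\Sigma_F}$ denotes the Galois group of the maximal extension of $F$ unramified outside $\Sigma_F$.

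Next, since $\psi$ becomes trivial upon restriction to $G_F$, the right-hand side identifies as abelian groups with $\Hom_{\rm cts}(G_{F,\Sigma_F}, E/\Oo)$, where the $\Gal(F/\bfQ)$-action combines conjugation on $G_{F,\Sigma_F}$ with the $\psi$-twisted action on $E/\Oo$; hence the target becomes $\Hom_{\rm cts}(G_{F,\Sigma_F}, E/\Oo(\psi))^{\Gal(F/\bfQ)}$. I would then translate the $H^1_\emptyset$ condition: unramifiedness at every $\ell \in \Sigma$ on the $\bfQ$ side corresponds, under restriction, to unramifiedness at every prime of $F$ above $\Sigma$. For $\ell \neq p$ this follows from the identification $H^1_{\rm f}(\bfQ_\ell, E/\Oo(\psi)) = H^1_{\rm ur}(\bfQ_\ell, E/\Oo(\psi))$ recalled above, while at $\ell=p$ one uses the same equality $H^1_{\rm f}(\bfQ_p, E/\Oo(\psi))=H^1_{\rm ur}(\bfQ_p, E/\Oo(\psi))$ already noted, which crucially relies on the order of $\psi$ being coprime to $p$. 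Since $G_{F,\Sigma_F}$ is already unramified outside $\Sigma_F$, imposing unramifiedness at all primes of $\Sigma_F$ singles out exactly the homomorphisms that factor through the Galois group of the maximal everywhere-unramified abelian extension of $F$.

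Finally, by class field theory this Galois group is canonically isomorphic to $\Cl(F) = \Cl(\bfQ(\psi))$, and the isomorphism is $\Gal(F/\bfQ)$-equivariant for the natural actions on both sides. Combining these steps yields
$$H^1_\emptyset(\bfQ, E/\Oo(\psi)) \cong \Hom(\Cl(\bfQ(\psi)), E/\Oo(\psi))^{\Gal(\bfQ(\psi)/\bfQ)},$$
as desired.

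The main technical subtlety, and the only place where real care is required, is the compatibility of local Selmer conditions under inflation--restriction, especially at $p$: one must verify that the $H^1_{\rm f}$ condition on the $\bfQ$ side really does pull back to plain unramifiedness at primes above $p$ on the $F$ side. Everything else is a routine combination of the vanishing of $H^i(\Gal(F/\bfQ),-)$ in positive degree on $p$-primary modules and unramified class field theory.
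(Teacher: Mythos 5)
Your proof is correct and follows essentially the same route as the source: the paper gives no argument of its own here, citing Rubin's Proposition 1.6.2 directly, and your inflation--restriction plus unramified class field theory argument (with the vanishing of $H^i(\Gal(\bfQ(\psi)/\bfQ),-)$ on $p$-primary modules and the matching of local conditions, including $H^1_{\rm f}=H^1_{\rm ur}$ at $p$) is exactly how that result is proved. The only point you might add is that unramifiedness at the archimedean place is automatic since the coefficients are $p$-primary with $p$ odd, so the relevant homomorphisms do factor through $\Gal$ of the wide Hilbert class field, i.e.\ through ${\rm Cl}(\bfQ(\psi))$.
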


\begin{lemma} \label{lower bound 21} Let $\tilde\xi$ be a Dirichlet character.  Let $\xi: G_{\Sigma} \to \Oo^{\times}$ be the associated Galois character and write $\ov{\xi}$ for its mod $\varpi$ reduction.  Let $s$ be a positive integer. Set $W=E/\Oo(\xi^{-1})$ and $W_s=W[\varpi^s]$. Suppose $\ell \in \Sigma - \{p\}$ and let $\Sigma' \subset \Sigma$ with $\ell \not\in \Sigma'$. Assume that either \begin{itemize}
\item[(i)] $\xi_s:=\xi$ mod $\varpi^s$ is unramified at $\ell$ and $\ell\ov{\xi}(\Frob_{\ell})\neq 1$; 

or 

\item[(ii)] $\ov{\xi}$ is ramified at $\ell$. \end{itemize} Then one has $$H^1_{\Sigma' \cup \{\ell\}}(\bfQ,W_s) =H^1_{\Sigma'}(\bfQ, W_s).$$
\end{lemma}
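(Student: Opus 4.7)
The claim is that adjoining $\ell$ to $\Sigma'$ does not enlarge the Selmer group, i.e.\ that for $\ell \neq p$ the local condition at $\ell$ is automatic: concretely, it suffices to prove
\[
H^1_{\rm f}(\bfQ_\ell, W_s) = H^1(\bfQ_\ell, W_s).
\]
Since $\ell \neq p$ and $\xi^{-1}$ has order prime to $p$, the same argument given earlier in the paper (following \cite{Rubin00} Lemmas 1.3.5, 1.3.8) identifies $H^1_{\rm f}(\bfQ_\ell, W_s)$ with $H^1_{\rm ur}(\bfQ_\ell, W_s)$. So the plan is to show that in both cases one in fact has $H^1(\bfQ_\ell, W_s) = H^1_{\rm ur}(\bfQ_\ell, W_s)$, which I will do by counting via the local Euler–Poincar\'e formula together with local Tate duality.

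Write $\eta := \xi^{-1}$, so $W_s = (\Oo/\varpi^s)(\eta)$. Since $\ell \neq p$, the Tate Euler characteristic formula gives $|H^1(\bfQ_\ell, W_s)| = |H^0(\bfQ_\ell, W_s)|\cdot |H^2(\bfQ_\ell, W_s)|$, and local duality yields $|H^2(\bfQ_\ell, W_s)| = |H^0(\bfQ_\ell, W_s^*(1))|$, where $W_s^*(1) \cong (\Oo/\varpi^s)(\eta^{-1}\epsilon)$ with $\epsilon$ the cyclotomic character (unramified at $\ell \neq p$, with $\epsilon(\Frob_\ell) = \ell$). Moreover, since $W_s$ is finite and killed by $\varpi^s$, one has $|H^1_{\rm ur}(\bfQ_\ell, W_s)| = |H^0(\bfQ_\ell, W_s)|$ (both equal $|\ker(\Frob_\ell-1)|$ on $W_s^{I_\ell}$). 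Thus the inclusion $H^1_{\rm ur}(\bfQ_\ell,W_s) \subseteq H^1(\bfQ_\ell,W_s)$ is an equality iff $H^0(\bfQ_\ell, W_s^*(1)) = 0$, and also iff $H^0(\bfQ_\ell, W_s) = |H^1(\bfQ_\ell,W_s)|/|H^2|$; so it suffices to show both $H^0$'s of $W_s^*(1)$ and, if needed, of $W_s$ itself are trivial.

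In Case (i), $\eta$ is unramified at $\ell$, so $W_s^*(1)$ is unramified and $\Frob_\ell$ acts on it by $\eta^{-1}(\Frob_\ell)\cdot\ell$. Using the hypothesis $\ell\,\ov{\xi}(\Frob_\ell) \neq 1$, i.e.\ $\ov{\eta}(\Frob_\ell) \neq \ell$ in $\bfF$, the scalar $\eta^{-1}(\Frob_\ell)\ell - 1$ is a unit in $\Oo/\varpi^s$, so $H^0(\bfQ_\ell,W_s^*(1)) = 0$. Hence $|H^2|=1$ and so $|H^1| = |H^0| = |H^1_{\rm ur}|$, yielding the desired equality. In Case (ii), $\ov{\xi}|_{I_\ell} \neq 1$; since $\eta = \xi^{-1}$ is the Teichm\"uller lift of $\ov{\eta}$ (Remark \ref{invariants1}), there exists $\sigma \in I_\ell$ with $\ov{\eta}(\sigma) \neq 1$, so $\eta(\sigma) - 1$ is a unit in $\Oo/\varpi^s$, giving $W_s^{I_\ell} = 0$ and hence $H^0(\bfQ_\ell, W_s)=0$. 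The same argument applies to $W_s^*(1) \cong (\Oo/\varpi^s)(\eta^{-1}\epsilon)$, since $\epsilon$ is trivial on $I_\ell$, so $\ov{\eta^{-1}\epsilon}|_{I_\ell} = \ov{\eta}^{-1}|_{I_\ell} \neq 1$, giving $H^0(\bfQ_\ell,W_s^*(1)) = 0$ as well. Therefore $|H^1(\bfQ_\ell,W_s)| = 1$ and certainly equals $|H^1_{\rm ur}|$. In both cases the local condition at $\ell$ imposes nothing, completing the proof.

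The only real point to be careful about is verifying the identification $W_s^*(1) \cong (\Oo/\varpi^s)(\eta^{-1}\epsilon)$ used in applying Tate duality, but this is standard for finite modules over $\Oo/\varpi^s$; no delicate obstacle arises.
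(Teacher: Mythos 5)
Your proof is correct, but it takes a genuinely different route from the paper's. You show that the local condition at $\ell$ is vacuous, i.e.\ $H^1_{\rm f}(\bfQ_\ell, W_s)=H^1_{\rm ur}(\bfQ_\ell, W_s)=H^1(\bfQ_\ell, W_s)$, by a counting argument: the local Euler--Poincar\'e formula $|H^1|=|H^0|\cdot|H^2|$ (valid since $\ell\neq p$ and $W_s$ has $p$-power order), Tate duality $|H^2(\bfQ_\ell,W_s)|=|H^0(\bfQ_\ell,W_s^*(1))|$ with $W_s^*(1)\cong(\Oo/\varpi^s)(\xi\epsilon)$, and the standard equality $|H^1_{\rm ur}|=|H^0|$ for finite modules; the hypotheses in (i) and (ii) are exactly what make $H^0(\bfQ_\ell,W_s^*(1))$ vanish, since $\xi_s(\Frob_\ell)\ell-1$ (resp.\ $\xi(\sigma)-1$ for suitable $\sigma\in I_\ell$) is a unit in $\Oo/\varpi^s$. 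The paper instead invokes the exact sequence of \cite{Rubin00} Theorem 1.7.3 and, in case (i), computes the quotient $H^1(\bfQ_\ell,W_s)/H^1_{\rm ur}(\bfQ_\ell,W_s)$ explicitly via inflation-restriction and the tame quotient $I_\ell^{\rm tame}\cong\bfZ_p(1)$, while case (ii) is delegated to an external lemma of \cite{BergerKlosin13}; your argument treats both cases uniformly and reduces the global statement directly to the definition of the Selmer group rather than to Rubin's sequence. Two cosmetic points: $H^1_{\rm ur}$ is the \emph{cokernel} of $\Frob_\ell-1$ on $W_s^{I_\ell}$, not the kernel (the cardinalities agree for finite modules, so nothing breaks); and in case (i) only $\xi\bmod\varpi^s$, not $\xi$ itself, is assumed unramified, which is all your argument actually uses. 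Note also that, like the paper's own proof, your case (ii) and the identification $H^1_{\rm f}=H^1_{\rm ur}$ rely on $\xi$ having order prime to $p$ (so that it is the Teichm\"uller lift of $\ov\xi$), a hypothesis implicit in the paper's standing conventions for Selmer coefficients.
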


\begin{proof}   First assume that $\ov{\xi}$ is ramified  at $\ell$. Then $W_1^{I_{\ell}}=0$ and so $W^{I_{\ell}}=0$ and we use \cite{BergerKlosin13} Lemma 5.6 to conclude that $$H^1_{\Sigma' \cup \{\ell\}}(\bfQ,W_s)=H^1_{\Sigma'}(\bfQ,W_s).$$ 

We note that the definition of the global Selmer group $H^1_\Sigma(\bfQ,W_s)$ in \cite{BergerKlosin13} differs from our definition here in that it uses the Fontaine-Laffaille condition at $p$, rather than assuming that classes are unramified. But on the level of divisible coefficients the definitions agree, so we apply \cite{BergerKlosin13} Lemma 5.6  to conclude $H^1_{\Sigma' \cup \{\ell\}}(\bfQ,W)=H^1_{\Sigma'}(\bfQ,W)$ and then invoke \eqref{functoriality}.

From now on assume that $\xi_s$ and so also $W_s$ is unramified at $\ell$. By  \cite{Rubin00}, Theorem 1.7.3 we have an exact sequence $$0 \to H^1_{\Sigma'}(\bfQ, W_s) \to H^1_{\Sigma' \cup \{\ell\}}(\bfQ, W_s) \to \frac{H^1(\bfQ_{\ell}, W_s)}{H^1_{\rm ur}(\bfQ_{\ell}, W_s)}.$$ Lemma 1.3.8(ii) in \cite{Rubin00} tells us that $H^1_{\rm ur}(\bfQ_{\ell}, W_s) = H^1_f(\bfQ_{\ell}, W_s)$. 

To prove the claim it is enough to show that the image of the map $H^1(\bfQ_{\ell}, W_s) \to H^1(I_{\ell}, W_s)$ is zero.  To do so consider the inflation-restriction sequence (where we set $G:=\Gal(\bfQ_{\ell}^{\rm ur}/\bfQ_{\ell})$):
$$H^1(G, W_s) \to H^1(\bfQ_{\ell}, W_s) \to H^1(I_{\ell}, W_s)^{G}\to H^2(G, W_s).$$ The last group in the above sequence is zero since $G\cong \hat{\bfZ}$ and $\hat{\bfZ}$ has cohomological dimension one. This means that the image of the restriction map $H^1(\bfQ_{\ell}, W_s) \to H^1(I_{\ell}, W_s)$ equals $H^1(I_{\ell}, W_s)^{G}$. Let us show that the  latter module is zero. Indeed, \begin{multline} H^1(I_{\ell}, W_s)^{G}=\Hom_G(I_{\ell}, W_s) = \Hom_G(I_{\ell}^{\rm tame}, W_s) \\= \Hom_G(\bfZ_p(1), \varpi^{-s}\Oo/\Oo( \xi^{-1} )) = \Hom_G(\bfZ_p, \varpi^{-s}\Oo/\Oo( \xi^{-1} \epsilon^{-1})).\end{multline} So, $\phi \in H^1(I_{\ell}, W_s)$ lies in $$H^1(I_{\ell}, W_s)^G=\Hom_G(\bfZ_p, \varpi^{-s}\Oo/\Oo( \xi^{-1} \epsilon^{-1}))$$ if and only if $\phi(x)=g \cdot \phi(g^{-1}\cdot x) = g\cdot \phi(x)=\xi_s^{-1} \epsilon^{-1}(g)\phi(x)$ for every $x \in I_{\ell}$ and every $g \in G$, i.e., if and only if \be \label{eq1} ( \xi_s^{-1} \epsilon^{-1}(g)-1)\phi(x)\in \Oo\quad \textup{for every $x \in I_{\ell}$, $g \in G$.}\ee Since $\Frob_{\ell}$ topologically generates $G$, we see that \eqref{eq1} holds if and only if it holds for every $x \in I_{\ell}$ and for $g=\Frob_{\ell}$.  So condition \eqref{eq1} becomes \be \label{eq2} (1-\xi_s^{-1}(\Frob_{\ell})\ell^{-1})\phi(x)\in \Oo\quad \textup{for every $x \in I_{\ell}$.}\ee Since $\ov{\xi}(\Frob_{\ell})\ell \neq 1$, the factor $\val_p(1-\xi_s^{-1}(\Frob_{\ell})\ell^{-1})=0$, we get that $\phi(x) \in \Oo$, as claimed.
\end{proof}

\section{Deformation theory}

\subsection{Assumptions} \label{The residual representation}

Let $p>2$ be a prime and $N $ a positive integer with $p\nmid N$.   Let $\tilde\chi: (\bfZ/Np\bfZ)^{\times} \to \bfC^{\times}$ 
denote a Dirichlet character of order prime to $p$ with $\tilde{\chi}(-1)=-1$. We write $\tilde{\chi}=\tilde{\chi}_N \tilde{\chi}_p$ where $\tilde{\chi}_N$ is a Dirichlet character mod $N$ and $\tilde{\chi}_p$ is a Dirichlet character mod $p$. We assume that  $\tilde{\chi}_N$ is primitive. In particular, we allow but do not require that $\tilde{\chi}$ has $p$ in its conductor.

Write $\Sigma$ for a finite set of primes containing $p$ and the primes dividing $N$. Let $\chi: G_{\Sigma} \to \Oo^{\times}$ be the Galois character associated to $\tilde\chi$ and write $\ov{\chi}: G_{\Sigma} \to \bfF^{\times}$ for its mod $\varpi$ reduction. 
We assume that $\ov{\chi}|_{D_p} \neq 1$.

Write $F:= \bfQ(\chi)$ for the splitting field of $\chi$ and ${\rm Cl}(F)$ for the class group of $F$. Set $C_F := {\rm Cl}(F)\otimes_{\bfZ} \Oo$. For any character $\psi: \Gal (F/\bfQ) \to \Oo^{\times}$ we write $C_F^{\psi}$ for the $\psi$-eigenspace of $C_F$ under the canonical action of $\Gal(F/\bfQ)$, i.e. $$C_F^{\psi}=\{c \in C_F| g \cdot c=\psi(g)c \, \text{ for all } g \in G_\Sigma\}.$$  In this paper we work under the following assumptions:
\begin{enumerate}
\item $C_F^{\chi^{-1}}$ is a non-zero cyclic $\Oo$-module, i.e., $\dim_{\bfF}C_F^{\chi^{-1}}\otimes_{\Oo}\bfF=1$;
\item if $\ell \in \Sigma$ but $\ell \nmid Np$ then   $\tilde\chi(\ell)\ell \not\equiv 1$ mod $\varpi$;
\item if $\ell \in \Sigma$ but $\ell \nmid Np$ then   $\tilde\chi(\ell)\not\equiv \ell$ mod $\varpi$.
\end{enumerate}

\begin{rem} We note that  $C_F^{\chi^{-1}}\neq 0$ is equivalent to $\val_p(L(0, \tilde \chi))>0$. This is so because  under our assumptions on $\chi$, we have that (cf. Theorem 2 in \cite{MazurWiles84}) \be \label{size of class group} \# C_F^{\chi^{-1}} = \# \Oo/L(0,\tilde \chi).\ee\end{rem}

 Let $\rho_0: G_{\Sigma} \to \GL_2(\bfF)$ be a continuous homomorphism of the form $$\rho_0 = \bmat 1 & * \\ 0 & \ov{\chi} \emat \not\cong 1 \oplus \ov{\chi}$$ such that $\rho_0|_{D_p}\cong 1\oplus  \ov{\chi}|_{D_p}$.

For the convenience of the reader we discuss in section \ref{summary} how the  assumptions are used.

\subsection{The residual representation} 
We begin by proving the uniqueness of $\rho_0$ up to isomorphism. Note that for this result we do not need to assume that $\rho_0$ is split on $D_p$, but only on $I_p$.

\begin{prop}\label{uniqueness} Let $\rho': G_{\Sigma} \to \GL_2(\bfF)$ be a continuous homomorphism of the form $$\rho'=\bmat 1 & * \\ 0 & \ov{\chi}\emat \not\cong 1 \oplus \ov{\chi}$$ such that $\rho'|_{I_p} \cong 1\oplus \ov{\chi}|_{I_p}$. Then $\rho'\cong \rho_0$. 
\end{prop}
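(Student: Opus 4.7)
The plan is to pass to extension classes. Any $\rho'$ of the stated form gives rise to a class $[c'] \in H^1(G_\Sigma, \bfF(\ov{\chi}^{-1})) \cong \Ext^1_{G_\Sigma}(\bfF(\ov{\chi}), \bfF)$, and two non-split upper-triangular extensions of $\ov{\chi}$ by the trivial character determine isomorphic representations if and only if their classes differ by an element of $\bfF^{\times}$. Writing $[c_0] \ne 0$ for the class attached to $\rho_0$, it thus suffices to show that
\[
V := \{\, [c] \in H^1(G_\Sigma, \bfF(\ov{\chi}^{-1})) : [c]|_{I_p} = 0 \,\}
\]
is one-dimensional over $\bfF$; then $[c'] \in V = \bfF \cdot [c_0]$, whence $\rho' \cong \rho_0$.

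Set $W := \bfF(\ov{\chi}^{-1})$. My first step is to identify $V$ with the Selmer group $H^1_{\Sigma \setminus \{p\}}(\bfQ, W)$. Inflation-restriction identifies the kernel of $H^1(D_p, W) \to H^1(I_p, W)$ with $H^1_{\rm ur}(\bfQ_p, W) = H^1(D_p/I_p, W^{I_p})$. A case distinction kills this: if $\ov{\chi}|_{I_p} \ne 1$, then $W^{I_p} = 0$ by Remark \ref{invariants1}; if $\ov{\chi}|_{I_p} = 1$ then the hypothesis $\ov{\chi}|_{D_p} \ne 1$ forces $\ov{\chi}(\Frob_p) \ne 1$ in $\bfF^{\times}$, so $H^1(D_p/I_p, W) = W/(\ov{\chi}^{-1}(\Frob_p)-1)W = 0$. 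A parallel argument via the long exact sequence attached to $0 \to W \to E/\Oo(\chi^{-1}) \xrightarrow{\varpi} E/\Oo(\chi^{-1}) \to 0$, combined with $(E/\Oo(\chi^{-1}))^{D_p} = 0$ (again by Remark \ref{invariants1}), shows $H^1_{\rm f}(\bfQ_p, W) = 0$. Consequently the condition $[c]|_{I_p} = 0$ becomes equivalent to $[c]|_{D_p} \in H^1_{\rm f}(\bfQ_p, W)$, i.e., to membership in $H^1_{\Sigma \setminus \{p\}}(\bfQ, W)$.

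Next I would strip ramification at the remaining primes. For each $\ell \in \Sigma \setminus \{p\}$ I apply Lemma \ref{lower bound 21} with $\xi = \chi$ and $s = 1$: if $\ell \mid N$, then primitivity of $\tilde{\chi}_N$ together with $\chi$ having order prime to $p$ forces $\ov{\chi}$ to be ramified at $\ell$, triggering hypothesis (ii); if $\ell \nmid Np$, then $\chi$ is unramified at $\ell$ and assumption (2) of \S\ref{The residual representation} supplies $\ell\, \ov{\chi}(\Frob_\ell) \ne 1$, triggering hypothesis (i). Iterating over $\ell$ yields $H^1_{\Sigma \setminus \{p\}}(\bfQ, W) = H^1_{\emptyset}(\bfQ, W)$.

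Finally, \eqref{functoriality} identifies $H^1_\emptyset(\bfQ, W)$ with $H^1_\emptyset(\bfQ, E/\Oo(\chi^{-1}))[\varpi]$, and Proposition \ref{clgroup} rewrites this as $\Hom(\Cl(F), E/\Oo(\chi^{-1}))^{\Gal(F/\bfQ)}[\varpi]$; decomposing $C_F$ into $\Gal(F/\bfQ)$-eigenspaces (legitimate as $\chi$ has order prime to $p$) collapses this to $\Hom_{\Oo}(C_F^{\chi^{-1}}, \bfF)$, which by the cyclicity assumption (1) is one-dimensional over $\bfF$. The main obstacle is the Step~1 case analysis at $p$: exploiting $\ov{\chi}|_{D_p} \ne 1$ in both the ramified and unramified subcases is what promotes the $I_p$-splitting hypothesis on $\rho'$ to a genuine global Selmer condition inside Rubin's framework, after which Lemma \ref{lower bound 21} and Proposition \ref{clgroup} drive the argument home.
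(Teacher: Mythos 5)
Your proposal is correct and follows essentially the same route as the paper: both reduce the statement to showing that the classes split on $I_p$ land in $H^1_{\emptyset}(\bfQ,\bfF(\ov{\chi}^{-1}))$ via Lemma \ref{lower bound 21}, and then identify that group with $C_F^{\chi^{-1}}[\varpi]$ via \eqref{functoriality} and Proposition \ref{clgroup}, concluding by cyclicity. The only (harmless) differences are that you treat the prime $p$ first rather than last and prove the slightly stronger local vanishing $H^1_{\rm f}(\bfQ_p,W)=H^1_{\rm ur}(\bfQ_p,W)=0$, whereas the paper only invokes the inclusion \eqref{urf}.
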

\begin{proof} Let $\rho'$ be as in the statement of the proposition. Then $*$ gives rise to a non-zero element $c$  in $H^1_{\Sigma}(\bfQ, \bfF(\ov{\chi}^{-1}))$.  Using Lemma \ref{lower bound 21} and Assumption (2) above we conclude that $H^1_{\Sigma}(\bfQ, \bfF(\ov{\chi}^{-1}))=H^1_{\{p\}}(\bfQ, \bfF(\ov{\chi}^{-1}))$. By the assumption that $\rho'|_{I_p} \cong 1\oplus \ov{\chi}|_{I_p}$ we see that $c$ is unramified at $p$, hence in fact $c\in H^1_{\emptyset}(\bfQ, \bfF(\ov{\chi}^{-1}))$ by \eqref{urf}. 
By Proposition \ref{clgroup} we have that $H^1_{\emptyset}(\bfQ, E/\Oo(\chi^{-1}))\cong \Hom({\rm Cl}(F), E/\Oo(\chi^{-1}))^{\Gal(\bfQ(\chi)/\bfQ)}$. This last group is (non-canonically) isomorphic to  $C_F^{\chi^{-1}}$. By Assumption (1), the group $C_F^{\chi^{-1}}$ is cyclic, hence so is $H^1_{\emptyset}(\bfQ, E/\Oo(\chi^{-1}))$.
By \eqref{functoriality} we get that $H^1_{\emptyset}(\bfQ, \bfF(\ov{\chi}^{-1}))\cong H^1_{\emptyset}(\bfQ, E/\Oo(\chi^{-1}))[\varpi]$, so $H^1_{\emptyset}(\bfQ, \bfF(\ov{\chi}^{-1}))$ is also cyclic. Hence  the extension given by $c$ is a non-zero scalar multiple of the one given by $\rho_0$. The claim follows.
\end{proof}

\subsection{The deformation problems} \label{deformationproblem}

Set $R$ to be the universal deformation ring for deformations $\rho:G_\Sigma \to \GL_2(A)$ of $\rho_0$ for $A$  an object in ${\rm CNL}(\Oo)$, the category of local complete Noetherian $\Oo$-algebras with residue field $\bfF$, such that:
\begin{itemize}
\item[(i)] $\det \rho = \chi$
\item[(ii)] $\rho|_{D_p} \cong \bmat \psi_1 &* \\ & \psi_2\emat$ with $\psi_2$ unramified $\psi_2 \equiv 1 \mod{\fm_A}$ (ordinary and $p$-distinguished)

\item[(iii)] If $\ell \in \Sigma$ is such that $\ell \equiv 1$ (mod $p$) then $\rho|_{I_{\ell}} = 1 \oplus \chi$.

\end{itemize}

Let $\rho^{\rm univ}: G_{\Sigma} \to \GL_2(R)$ be the universal deformation. Write $I$ for the ideal of reducibility of $\rho^{\rm univ}$.

Let $R^{\rm split}$ be the universal deformation ring for the deformations where (ii) is strengthened to assuming that $\rho|_{D_p}$ is split. We denote the universal deformation for the stronger condition by $\rho^{\rm split}$ and write $I^{\rm split}$ for its ideal of reducibility.

We will refer to deformations satisfying (i)-(iii) as \emph{ordinary deformations} (or simply as \emph{deformations}), whilst calling the ones satisfying the stronger condition  \emph{split deformations}. It is clear that every split deformation is a deformation, so we get a natural map $R \to R^{\rm split}$.

\begin{rem} \label{CM1}
Note that by Corollary to Theorem 11 in \cite{Sen81}  the assumption that $\rho|_{D_p}$ is split corresponds to $\rho|_{D_p}$ being Hodge-Tate (and even de Rham) with Hodge-Tate weights $0$. Such representations $\rho$ are expected to correspond to classical modular forms of weight $1$.

If one knows that $\rho(G_{\bfQ})$ is finite then one can easily prove this special case of Artin's conjecture: From the classification of subgroups of ${\rm GL}_2(\bfC)$ one can show (see e.g. section 2 of \cite{DummiganSpencer}) that the residual reducibility requires the image of $\rho$ to be dihedral. From this one deduces (see e.g. section 7 in \cite{Serre77b}) that there exists a quadratic extension $F/\bfQ$ for which $\rho \otimes \chi_{F/\bfQ} \cong \rho$, where $\chi_{F/\bfQ}$ is the unique character of $G_{\bfQ}$ that factors through the non-trivial character of $\Gal(F/\bfQ)$. This implies that $\chi=\chi_{F/\bfQ}$, so $F$ has to be imaginary quadratic as $\chi$ is odd. It further follows that $\rho$ is the induction of a finite order Galois character of $G_F$, i.e. that $\rho$ corresponds to a weight 1 CM form.  In section \ref{sect8} we prove (without the assumption that $\rho(G_\bfQ)$ is finite) that split deformations of $\rho_0$ with $\chi=\chi_{F/\bfQ}$ indeed correspond to classical weight 1 CM forms.

\end{rem}

\subsection{Reducible deformations}

We record the following general lemma regarding pseudocharacters that helps us study reducible deformations.
\begin{lemma}\label{uniqueness of Ti}  Let $A$ be a Henselian local ring with a maximal ideal $\fm$ and let $G$ be a group. Let $\tau_1, \tau_2: G \to (A/\fm)^{\times}$ be two distinct characters which we can regard as homomorphisms from $A[G]$ to $A/\fm$. Let $T: A[G]\to A$ be a pseudocharacter of dimension 2 such that there exist characters $T_1, T_2$ with $T=T_1+T_2$ with the property that $T_i\otimes_AA/\fm=\tau_i$ for $i=1,2$. Then $T_1$ and $T_2$ are uniquely determined.
\end{lemma}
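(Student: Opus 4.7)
The plan is to use Hensel's lemma: at each $g \in G$ the pair $(T_1(g), T_2(g))$ consists of the two roots of a monic quadratic over $A$ whose reduction mod $\fm$ has the distinct roots $\tau_1(g), \tau_2(g)$, and Henselianness forces the lifts to be unique.

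First I would verify that the product $g \mapsto T_1(g)T_2(g)$ is intrinsic to $T$, i.e., the same for any decomposition of $T$ into characters lifting $\tau_1, \tau_2$. Since $T_i$ is a character, $T_i(g^2) = T_i(g)^2$, and therefore
$$T(g)^2 - T(g^2) = (T_1(g) + T_2(g))^2 - T_1(g)^2 - T_2(g)^2 = 2\, T_1(g) T_2(g).$$
In the setting of this paper $2 \in A^{\times}$ (the residue characteristic is $p>2$), so $D(g) := T_1(g)T_2(g) = (T(g)^2 - T(g^2))/2$ depends only on $T$.

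Next, for each $g \in G$ the values $T_1(g), T_2(g)$ are roots of
$$P_g(X) = X^2 - T(g) X + D(g) \in A[X],$$
whose reduction modulo $\fm$ is $(X - \tau_1(g))(X - \tau_2(g))$. When $\tau_1(g) \neq \tau_2(g)$, these residual roots are distinct, so Hensel's lemma (valid as $A$ is Henselian) provides a unique factorization $P_g(X) = (X - \alpha_1)(X - \alpha_2)$ in $A[X]$ with $\alpha_i \equiv \tau_i(g) \pmod{\fm}$. Since $T_i(g)$ must lift $\tau_i(g)$, this forces $T_i(g) = \alpha_i$, determining $T_1(g)$ and $T_2(g)$ from $T$ alone at any such $g$.

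Finally, I would handle elements $h \in G$ with $\tau_1(h) = \tau_2(h)$ by a multiplicativity trick. Pick $g_0 \in G$ with $\tau_1(g_0) \neq \tau_2(g_0)$, which exists since $\tau_1 \neq \tau_2$. Then $\tau_1(hg_0) = \tau_1(h)\tau_1(g_0) \neq \tau_2(h)\tau_2(g_0) = \tau_2(hg_0)$, so $T_i(hg_0)$ is pinned down by the Hensel step. As $T_i(g_0) \equiv \tau_i(g_0) \not\equiv 0 \pmod{\fm}$, the element $T_i(g_0)$ is a unit in $A$, and multiplicativity of $T_i$ gives $T_i(h) = T_i(hg_0)\, T_i(g_0)^{-1}$, uniquely determined. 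The only real technical input is the first step --- showing that the auxiliary datum $D(g)$ is recoverable from $T$, which uses $2 \in A^{\times}$; the rest is a clean application of Hensel's lemma combined with the character relation $T_i(gh) = T_i(g) T_i(h)$.
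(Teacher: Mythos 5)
Your proof is correct, but it takes a genuinely different route from the paper, which simply quotes the last assertion of Proposition 1.5.1 in \cite{BellaicheChenevierbook} (applied with $J=0$, $R=A[G]$, and the partition $\mP=\{\{1\},\{2\}\}$). Your argument is self-contained and elementary: you recover the ``determinant'' $D(g)=T_1(g)T_2(g)=(T(g)^2-T(g^2))/2$ from $T$ alone, identify $T_1(g),T_2(g)$ as the roots of the monic quadratic $X^2-T(g)X+D(g)$, and use Hensel's lemma plus residual distinctness to pin them down, propagating to the non-residually-separated elements by multiplicativity. This is a perfectly valid substitute and arguably more transparent than the black-box citation; note that in fact only the \emph{uniqueness} half of Hensel is needed, which already holds over any local ring (if two monic linear factorizations lift the same coprime residual factorization, their difference is killed by a unit), so Henselianness is not essential to your argument. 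The one hypothesis you add, $2\in A^{\times}$, is harmless: it is built into the Bella{\"\i}che--Chenevier notion of a $2$-dimensional pseudocharacter ($d!$ invertible) and holds throughout the paper since the residue characteristic is $p>2$. What the paper's citation buys is uniformity (it works for arbitrary dimension and more general decompositions); what your argument buys is that the reader sees exactly which structure --- the trace and the recoverable determinant, plus residual separation --- forces the decomposition to be rigid.
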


\begin{proof} This is the last assertion of Proposition 1.5.1 in \cite{BellaicheChenevierbook} where we take $J=0$ and $R=A[G]$ and then $\textup{dec}_{\mathcal{P}}$ is satisfied for $\mP=\{\{1\},\{2\}\}$ with $I_{\mP}=0$. \end{proof}

\begin{prop} \label{infi} There do not exist any non-trivial upper-triangular deformations of $\rho_0$ to $\GL_2(\bfF[X]/X^2)$.  
\end{prop}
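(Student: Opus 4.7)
My approach is to parametrize upper-triangular deformations $\tilde\rho$ of $\rho_0$ to $\bfF[X]/X^2$ by adjoint cocycles and then show that each such cocycle is a coboundary. Write $\tilde\rho(g)=(I+Xu(g))\rho_0(g)$ for a $1$-cocycle $u: G_\Sigma\to\textup{Ad}(\rho_0)$; upper-triangularity forces $u$ to take values in the Borel sub-$G_\Sigma$-module $B=\bmat \bfF & \bfF(\ov\chi^{-1}) \\ 0 & \bfF\emat$, whose two diagonal summands carry the trivial action (diagonal matrices commute with upper-triangular ones). Thus $u_{11},u_{22}: G_\Sigma\to\bfF$ are group homomorphisms and $u_{12}\in Z^1(G_\Sigma,\bfF(\ov\chi^{-1}))$; condition (i) gives $u_{11}+u_{22}=0$.

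The first step is to prove $u_{11}=0$. As a homomorphism into the pro-$p$ group $(\bfF,+)$, $u_{11}|_{I_\ell}$ is automatically zero for $\ell\in\Sigma\setminus\{p\}$ with $\ell\not\equiv 1\pmod{p}$, since the pro-$p$ part of $\bfZ_\ell^\times$ is trivial. For $\ell\in\Sigma$ with $\ell\equiv 1\pmod{p}$, condition (iii) forces the diagonal characters of $\tilde\rho|_{I_\ell}$ to be $\{1,\chi|_{I_\ell}\}$, and matching by reduction mod $X$ yields $u_{11}|_{I_\ell}=0$. At $p$, the assumption $\ov\chi|_{D_p}\ne 1$ distinguishes $1+Xu_{11}|_{D_p}$ from the other diagonal character mod $X$, identifying it as the unramified $\psi_2$ of condition (ii); hence $u_{11}|_{I_p}=0$. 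So $u_{11}$ is everywhere unramified, and therefore trivial (as $\bfQ$ has no non-trivial everywhere-unramified abelian $p$-extensions); consequently $u_{22}=0$.

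With $u_{11}=u_{22}=0$ the deformation simplifies to $\tilde\rho=\bmat 1 & b+Xu_{12}\ov\chi \\ 0 & \ov\chi\emat$. Since $\rho_0$ has a unique one-dimensional sub (being non-split), any strict equivalence preserving upper-triangularity uses a conjugation $I+Xv$ with $v_{21}=0$, and a direct calculation shows this shifts $u_{12}$ by arbitrary $\bfF$-multiples of $c:=b\ov\chi^{-1}$ modulo coboundaries. Hence upper-triangular deformations modulo strict equivalence are parametrized by a Selmer quotient of $H^1(G_\Sigma,\bfF(\ov\chi^{-1}))$ by $\bfF\cdot[c]$. The cohomological translation of the local conditions is as follows: condition (iii) forces $u_{12}|_{I_\ell}$ to be a coboundary for $\ell\in\Sigma$ with $\ell\equiv 1\pmod{p}$; and since $\ov\chi|_{D_p}\ne 1$, condition (ii) for an upper-triangular $\tilde\rho$ collapses to the split condition, because the diagonal characters appear in the ``wrong order'' (with $1$ on top) and achieving $\psi_2\equiv 1$ on the bottom requires a sub-line in $\tilde\rho|_{D_p}$ lifting the $\ov\chi$-component of $\rho_0|_{D_p}$. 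An eigenvector calculation shows this sub-line exists iff $\tilde c|_{D_p}=c|_{D_p}+Xu_{12}|_{D_p}$ is a coboundary in $H^1(D_p,\bfF[X]/X^2(\ov\chi^{-1}))$, and since $c|_{D_p}$ is already a coboundary (by the assumed splitness of $\rho_0|_{D_p}$), this reduces to $u_{12}|_{D_p}$ being a coboundary in $H^1(D_p,\bfF(\ov\chi^{-1}))$.

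To conclude, the condition $u_{12}|_{D_p}=0$ in $H^1(D_p,\bfF(\ov\chi^{-1}))$ in particular forces $u_{12}$ to be unramified at $p$, so combined with Lemma \ref{lower bound 21} applied at each $\ell\in\Sigma\setminus\{p\}$ (via Assumption~(2) when $\ell\nmid N$, and via the ramification of $\ov\chi$ when $\ell\mid N$), the relevant Selmer group lies inside $H^1_{\emptyset}(\bfQ,\bfF(\ov\chi^{-1}))$. By Proposition \ref{clgroup} combined with \eqref{functoriality}, this last group is isomorphic to $C_F^{\chi^{-1}}[\varpi]$, which is one-dimensional over $\bfF$ by Assumption~(1). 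Since $[c]$ is a non-zero element of $H^1_\emptyset$ (as $\rho_0$ is non-split), the quotient vanishes and the proposition follows. The main obstacle I expect is the reformulation in the third paragraph: recognizing that because $\ov\chi|_{D_p}\ne 1$ while our $\tilde\rho|_{D_p}$ is presented with its diagonal characters in the ``wrong order'', the ordinary condition for an upper-triangular deformation collapses to the split condition, which is what forces the Selmer group to inject into $H^1_\emptyset$.
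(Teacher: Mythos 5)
Your proof is correct and follows essentially the same route as the paper's: the diagonal perturbations are killed by the identical everywhere-unramified argument, and your Selmer-theoretic treatment of the off-diagonal entry (including the key observation that ordinarity plus $p$-distinguishedness forces an upper-triangular lift with $1$ on top to be split at $D_p$) is precisely the content of the paper's appeal to its uniqueness statement for $\rho_0$. The only cosmetic difference is that the paper packages the second half as a $4$-dimensional representation whose relevant subquotient is handled by Proposition \ref{uniqueness}, whereas you carry out the underlying one-dimensionality computation for $H^1_{\emptyset}(\bfQ,\bfF(\ov{\chi}^{-1}))$ directly.
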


\begin{proof} Suppose that $\rho: G_{\Sigma} \to \GL_2(\bfF[X]/X^2)$ is such a deformation. Write $$\rho = \bmat 1+ aX & b \\ & \ov{\chi} + dX\emat$$ with $a,d : G_{\Sigma} \to \bfF$ and $b: G_{\Sigma} \to \bfF[X]/X^2$. Our deformation conditions guarantee that $a$ and $d$ are unramified at all primes. Indeed, $a$ and $d$ are at most tamely ramified at all primes $\ell \neq p$, but  if $\ell \in \Sigma - \{p\}$ and $\ell \not\equiv 1$ mod $p$, then there is no abelian $p$-extension of $\bfQ$ that is tamely ramified at $\ell$. On the other hand if $\ell \equiv 1$ mod $p$ then the deformation condition (iii) guarantees that $a|_{I_{\ell}}=0$. So, $a$ can only be ramified at $p$.

 By condition (ii) we have an isomorphism of $\bfF[X]/X^2[D_p]$-modules \be \label{onIp}  \bmat 1+ aX & b \\ & \ov{\chi} + dX\emat\cong \bmat \psi_1 &* \\ & \psi_2\emat,\ee
where each of the entries is considered to be restricted to $D_p$ and $\psi_2\equiv 1$ mod $X$. 
Using Lemma \ref{uniqueness of Ti} we see that we therefore must have $1+aX=\psi_2$ as $\ov{\chi}|_{D_p} \neq 1$. As $\psi_2$ is unramified, we conclude that $a$ is unramified (at $p$). Hence $a$ is unramified everywhere, so $a=0$. By  condition (i) we must also have $d=0$.

Now consider the entry $b=b_0 + b_1X$ with $b_0, b_1: G_{\Sigma} \to \bfF$. Using the basis $\bmat 1\\ 0 \emat, \bmat 0 \\ 1 \emat, \bmat X\\0\emat, \bmat 0 \\ X \emat$ we can write $\rho$ as a 4-dimensional representation over $\bfF$: $$\rho =  \bmat 1 & b_0 \\ & \ov{\chi} \\ &b_1 & 1 & b_0 \\ &&&\ov{\chi}\emat$$ which clearly has a subquotient isomorphic to $\bmat 1 & b_1 \\ & \ov{\chi}\emat$. If this subquotient is split we are done. Otherwise it must be isomorphic to $\rho_0$ by Proposition \ref{uniqueness}. From this it is easy to see that $\rho'\cong \rho_0$ as desired (cf. Proof of Proposition 7.2 in \cite{BergerKlosin13} for details).  
\end{proof}

\begin{cor} \label{structure} The structure maps $\Oo \to R/I$ and $\Oo \to R^{\rm split}/I^{\rm split}$ are surjective. \end{cor}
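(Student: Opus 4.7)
I apply topological Nakayama to the complete local Noetherian $\Oo$-algebra $R/I$: surjectivity of $\Oo\to R/I$ is equivalent to the vanishing of the relative tangent space, i.e.\ to the fact that every $\Oo$-algebra homomorphism $f:R/I\to\bfF[X]/X^2$ factors through the composition $\Oo\twoheadrightarrow\bfF\hookrightarrow\bfF[X]/X^2$. By the universal property of $R$, such an $f$ corresponds to an ordinary deformation $\rho:G_\Sigma\to\GL_2(\bfF[X]/X^2)$ of $\rho_0$ satisfying (i)-(iii) whose classifying map to $\bfF[X]/X^2$ kills the ideal of reducibility $I$.

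Writing $\rho(g)=\bsmat a(g)&b(g)\\ c(g)&d(g)\esmat$ in a basis with $\rho\bmod X=\rho_0$, and invoking the standard description of $I$ (via the generalized matrix algebra attached to $\rho^{\textup{univ}}$, cf.~\cite{BellaicheChenevierbook}) as the ideal generated by the products of off-diagonal entries of $\rho^{\textup{univ}}$, the condition that $f$ kill $I$ translates into the family of relations $b(g)c(h)=0$ in $\bfF[X]/X^2$ for every $g,h\in G_\Sigma$.

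Since $\rho_0$ is upper triangular, $c(g)\in X\bfF$ for every $g$, so $c(g)=X\tilde c(g)$ with $\tilde c(g)\in\bfF$. Since $\rho_0$ is non-split, the residual cocycle $\ast$ (the $(1,2)$-entry of $\rho_0$) is not a coboundary, in particular not identically zero, so $\ast(g_0)\neq 0$ for some $g_0\in G_\Sigma$; thus $b(g_0)=b_0+Xb_1$ with $b_0\neq 0$. The relation $b(g_0)c(h)=0$ then expands to $Xb_0\tilde c(h)=0$ in $\bfF[X]/X^2$, forcing $\tilde c(h)=0$ for every $h$, so $c\equiv 0$. Thus $\rho$ is upper triangular, and by Proposition~\ref{infi} it is strictly equivalent to the constant deformation of $\rho_0$ induced by $\bfF\hookrightarrow\bfF[X]/X^2$. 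Hence $f$ factors through $\Oo\twoheadrightarrow\bfF$, as required.

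For the split case, the natural surjection $R\twoheadrightarrow R^{\rm split}$ sends $I$ into $I^{\rm split}$ (it carries $\rho^{\textup{univ}}$ to $\rho^{\rm split}$ and hence off-diagonal products to off-diagonal products), inducing a surjection $R/I\twoheadrightarrow R^{\rm split}/I^{\rm split}$; surjectivity of $\Oo\to R/I$ then propagates to $\Oo\to R^{\rm split}/I^{\rm split}$. The only nontrivial ingredient beyond Proposition~\ref{infi} is the generalized matrix algebra description of the reducibility ideal, which I would cite rather than re-derive; the remaining argument is just the residual non-splitness of $\rho_0$ combined with the nilpotency $X^2=0$.
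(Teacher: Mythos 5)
Your ordinary-case argument is correct and is essentially the paper's own proof: the paper simply cites Proposition \ref{infi} together with the argument of Proposition 7.10 of \cite{BergerKlosin13}, which is exactly your reduction to the relative tangent space, the identification of ``kills $I$'' with the vanishing of the off-diagonal products via the GMA description (note that since $\rho_0$ is non-split one even has $B=R$ and $I=C$, so your computation with $b(g_0)$ a unit is precisely the reason), and the appeal to Proposition \ref{infi} to conclude that the resulting upper-triangular deformation is trivial.

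The one point to repair is your deduction of the split case. You invoke ``the natural surjection $R\twoheadrightarrow R^{\rm split}$'' and conclude that $R/I\twoheadrightarrow R^{\rm split}/I^{\rm split}$, but in this paper the surjectivity of $R\to R^{\rm split}$ is \emph{not} available at this stage: it is deduced in Proposition \ref{bound on R/I} from Proposition \ref{genbytraces} (generation by traces), which is itself a consequence of Corollary \ref{structure}. So propagating surjectivity through $R/I\to R^{\rm split}/I^{\rm split}$ is circular as written (a priori one only has a ring map $R\to R^{\rm split}$ carrying $I$ into $I^{\rm split}$, whence a map, not obviously surjective, on the quotients). The fix is immediate and is what the paper intends: run your tangent-space argument verbatim for $R^{\rm split}/I^{\rm split}$. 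An $\Oo$-algebra map $R^{\rm split}/I^{\rm split}\to\bfF[X]/X^2$ classifies a split deformation to the dual numbers killing $I^{\rm split}$; the same computation forces it to be upper triangular, and since every split deformation is in particular an ordinary deformation, Proposition \ref{infi} applies and shows it is trivial.
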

\begin{proof} Using Proposition \ref{infi} this is proved like Proposition 7.10 in \cite{BergerKlosin13}. \end{proof}
As a consequence of Corollary \ref{structure} one gets as in Proposition 7.13 in \cite{BergerKlosin13} the following proposition.

\begin{prop} \label{genbytraces} The ring $R$ is topologically generated as an $\Oo$-algebra by the set $\{\tr \rho^{\rm univ}(\Frob_{\ell}) \mid \ell \not\in\Sigma\}$ and $R^{\rm split}$ is topologically generated by $\{\tr \rho^{\rm split}(\Frob_{\ell}) \mid \ell \not\in\Sigma\}$. 
\end{prop}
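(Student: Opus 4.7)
I would adapt the argument of \cite[Proposition 7.13]{BergerKlosin13}. Let $R_0 \subset R$ denote the closed $\Oo$-subalgebra topologically generated by $\{\tr \rho^{\rm univ}(\Frob_\ell) : \ell \notin \Sigma\}$; the goal is to show $R_0 = R$, and the argument for $R^{\rm split}$ will be identical. By Chebotarev density and continuity of the trace, $R_0$ automatically contains $T(g) := \tr \rho^{\rm univ}(g)$ for every $g \in G_\Sigma$, and since $\det \rho^{\rm univ} = \chi$ takes values in $\Oo \subset R_0$ by condition (i), the $2$-dimensional pseudocharacter $(T, \chi)$ is already defined over $R_0$.

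The first step is to observe that $R = R_0 + I$. By Corollary \ref{structure} the structure map $\Oo \to R/I$ is surjective, and since $\Oo \subset R_0$, the composite $R_0 \hookrightarrow R \twoheadrightarrow R/I$ is surjective as well. Thus the proposition reduces to the inclusion $I \subset R_0$.

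For this last inclusion, I would exploit that the residual pseudocharacter $1 + \ov{\chi}$ is multiplicity-free (as $\ov{\chi} \neq 1$) to invoke the generalized matrix algebra framework of \cite[\S1.3--\S1.4]{BellaicheChenevierbook}: after a change of basis,
$$\rho^{\rm univ}(g) = \begin{pmatrix} a(g) & b(g) \\ c(g) & d(g) \end{pmatrix},$$
with $a \equiv 1$, $d \equiv \ov{\chi}$ modulo the maximal ideal of $R$, and with $I$ equal to the ideal of $R$ generated by the products $b(g)c(h)$ for $g,h \in G_\Sigma$. Lemma \ref{uniqueness of Ti} identifies $a \bmod I$ and $d \bmod I$ uniquely from $(T \bmod I, \chi \bmod I)$, and combining the dimension-$2$ Cayley--Hamilton identity $\rho^{\rm univ}(g)^2 = T(g)\,\rho^{\rm univ}(g) - \chi(g)\,\mathrm{Id}$ with iteration of the GMA multiplication law would let me write every generator of $I$ as a polynomial in the values $T(g')$ and $\chi(g')$ for $g' \in G_\Sigma$. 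All such quantities lie in $R_0$, yielding $I \subset R_0$ and hence $R = R_0$.

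The principal obstacle will be making the last step explicit. The trace $T(gh)$ directly delivers only the symmetric combination $b(g)c(h) + b(h)c(g)$; separating individual products $b(g)c(h)$ requires exploiting the orthogonal idempotents associated with the multiplicity-free residual pseudocharacter, for which the hypotheses $\ov{\chi} \neq 1$ and $p > 2$ are essential (the former to apply Lemma \ref{uniqueness of Ti} and produce the idempotents, the latter to keep the Cayley--Hamilton manipulations non-degenerate).
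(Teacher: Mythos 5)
Your outline follows the same route as the paper, which at this point simply invokes Corollary \ref{structure} and the argument of Proposition 7.13 of \cite{BergerKlosin13}: reduce to $I\subset R_0$ via $R=R_0+I$, then show that the generators $b(g)c(h)$ of $I$ lie in $R_0$. The obstacle you flag at the end is resolved exactly as you suggest, and it is worth recording how: since $\chi$ is odd and $p>2$ there is $g_0\in G_\Sigma$ with $\ov{\chi}(g_0)\neq 1$; the polynomial $X^2-T(g_0)X+\chi(g_0)$ has coefficients in $R_0$ and distinct roots modulo the maximal ideal, so Hensel's lemma (applicable because $R_0$ is a closed local subring of $R$) puts its roots $\alpha\equiv 1$ and $\delta\equiv\ov{\chi}(g_0)$ in $R_0$ with $\alpha-\delta$ a unit; after conjugating so that $\rho^{\rm univ}(g_0)=\diag(\alpha,\delta)$, one gets $a(g)=(\alpha-\delta)^{-1}\bigl(T(gg_0)-\delta T(g)\bigr)\in R_0$ for all $g$, and then $b(g)c(h)=a(gh)-a(g)a(h)\in R_0$ (likewise $c(g)b(h)=d(gh)-d(g)d(h)$), so the symmetrisation problem never arises. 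One genuine, though standard, step is still missing at the very end: knowing that the generators of $I$ lie in $R_0$ does not by itself give $I\subset R_0$, because $I$ is generated as an ideal of $R$, not of $R_0$. Writing $I_0$ for the ideal of $R_0$ generated by the elements $b(g)c(h)$, you must feed $R=R_0+I$ back in: $I=R\cdot I_0=(R_0+I)\,I_0\subset R_0+I\cdot I_0$, and iterating, $I\subset R_0+I\cdot I_0^{\,n}\subset R_0+\fm_R^{\,n+1}$ for every $n$, whence $I\subset R_0$ because $R_0$ is closed. With these two completions your argument is exactly the cited one, and it applies verbatim to $R^{\rm split}$.
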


\begin{prop} \label{bound on R/I} One has $\# R^{\rm split}/I^{\rm split} \leq \# R/I \leq \#C_F^{\chi^{-1}}$.  
\end{prop}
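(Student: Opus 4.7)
The first inequality is immediate from the canonical surjection $R\twoheadrightarrow R^{\rm split}$ induced by the inclusion of deformation problems, which sends $I$ into $I^{\rm split}$ since reducibility of the trace is functorial.

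For the main bound, by Corollary \ref{structure} write $R/I=\Oo/\varpi^n$ (allowing $n=\infty$), and let $\ov\rho: G_\Sigma\to\GL_2(R/I)$ denote the push-forward of $\rho^{\rm univ}$. The definition of $I$ and Lemma \ref{uniqueness of Ti} yield unique characters $\psi_1,\psi_2:G_\Sigma\to(R/I)^\times$ lifting $1$ and $\ov\chi$ respectively with $\tr\ov\rho=\psi_1+\psi_2$; a standard Ribet-style argument, using that $\rho_0$ is non-split with a unique $G_\Sigma$-stable residual line, then produces a basis in which
\[
\ov\rho=\bmat \psi_1 & b \\ 0 & \psi_2 \emat
\]
for a cocycle $b$. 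The main technical step is to show $\psi_1=1$. Since $\psi_1$ is valued in the pro-$p$ group $1+\fm_{R/I}$, it suffices to check it is unramified at every prime: at $\ell\in\Sigma\setminus\{p\}$ with $\ell\not\equiv 1\pmod p$, the tame inertia relation gives $\psi_1(x)^{\ell-1}=1$ on $I_\ell$ and hence $\psi_1|_{I_\ell}=1$; at $\ell\equiv 1\pmod p$, condition (iii) forces $\ov\rho|_{I_\ell}=1\oplus\chi|_{I_\ell}$, whence $\psi_1|_{I_\ell}=1$; at $p$, Lemma \ref{uniqueness of Ti} applied to the ordinary decomposition identifies $\psi_1|_{D_p}$ with the unramified character of the ordinary form. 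Since $\bfQ$ admits no non-trivial abelian pro-$p$ extensions unramified everywhere (with $p>2$), we conclude $\psi_1=1$ and $\psi_2=\chi$. The ordinary form at $p$ now presents $\ov\rho|_{D_p}$ in a second upper triangular structure with $\chi|_{D_p}$ on top and $1$ on the bottom; since the two diagonals are residually distinct, having both orderings forces $\ov\rho|_{D_p}$ to split, so $[b]|_{D_p}=0$. Together with condition (iii) at $\ell\equiv 1\pmod p$ and Lemma \ref{lower bound 21} at the remaining primes of $\Sigma$, this places $[b]$ in $H^1_\emptyset(\bfQ,(R/I)(\chi^{-1}))$, with $[b]\bmod\varpi=[\rho_0]\neq 0$.

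Finally, Proposition \ref{clgroup} and Assumption (1) give $\mathcal S:=H^1_\emptyset(\bfQ,(E/\Oo)(\chi^{-1}))\cong\Oo/\varpi^k$ with $\#\Oo/\varpi^k=\#C_F^{\chi^{-1}}$. By \eqref{functoriality}, $H^1_\emptyset(\bfQ,(\Oo/\varpi^n)(\chi^{-1}))\cong\mathcal S[\varpi^n]$, and comparing the short exact sequences $0\to\Oo/\varpi^n\to E/\Oo\xrightarrow{\varpi^n}E/\Oo\to 0$ and $0\to\bfF\to E/\Oo\xrightarrow{\varpi}E/\Oo\to 0$ identifies the reduction-of-coefficients map $H^1_\emptyset(\bfQ,(\Oo/\varpi^n)(\chi^{-1}))\to H^1_\emptyset(\bfQ,\bfF(\ov\chi^{-1}))\cong\mathcal S[\varpi]$ with multiplication by $\varpi^{n-1}$ on $\mathcal S$. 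For $n>k$ this map vanishes, contradicting that $[b]$ lifts the non-zero $[\rho_0]$. Hence $n\leq k$, giving $\#R/I\leq\#C_F^{\chi^{-1}}$. The main obstacle is assembling the local analysis at every prime of $\Sigma$ needed to force $\psi_1=1$; the cohomological conclusion then follows cleanly from cyclicity.
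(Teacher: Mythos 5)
Your proof is correct and follows essentially the same route as the paper's: surjectivity of $R\to R^{\rm split}$ (which the paper justifies via generation by traces) for the first inequality, then putting the reducible deformation in upper-triangular form, forcing the diagonal characters to be $1$ and $\chi$ by the same prime-by-prime local analysis, and bounding the resulting extension class in $H^1_{\emptyset}(\bfQ,(\Oo/\varpi^n)(\chi^{-1}))$ by $\#C_F^{\chi^{-1}}$. The only point to tidy is the case $n=\infty$, where your ``multiplication by $\varpi^{n-1}$'' step should be run on a finite quotient such as $\Oo/\varpi^{k+1}$ of $R/I$, exactly as the paper does with its auxiliary integer $s$.
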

\begin{proof} 
Note that Proposition \ref{genbytraces}  implies that the map $R \to R^{\rm split}$ is a surjection. This implies (see e.g. \cite{BergerKlosin13} Lemma 7.11) that the map $R/I \to R^{\rm split}/I^{\rm split}$ is also surjective, so we only need to prove the second inequality.

By Corollary \ref{structure} we get $R/I = \Oo/\varpi^r$ (allowing for $r=\infty$). Using Corollary 7.8 in \cite{BergerKlosin13} we know that any deformation to $\GL_2(R/I)$ is equivalent to one of the form $\bmat \Psi'_1 & b' \\ & \Psi'_2\emat $ with $\Psi'_1$ reducing to $1$ mod $\varpi$ and $\Psi'_2$ reducing to $\ov{\chi}$ mod $\varpi$.  (Note that the corollary assumes that the ring $R/I$ is Artinian. However, its proof uses Theorem 7.7 which allows for the ring to be Hausdorff and complete, so the case of  $r=\infty$ is also covered.) 
Let $s\leq r$ be a (finite) positive integer. Let $\rho: G_{\Sigma} \to \GL_2(\Oo/\varpi^s)$ be the composition of the deformation $\bmat \Psi'_1 & b' \\ & \Psi'_2\emat $  with the canonical projection $R/I \twoheadrightarrow \Oo/\varpi^s$. Then $\rho = \bmat \Psi_1 &  b\\& \Psi_2\emat$, where the non-primed entries are simply the reductions of the primed entries modulo $\varpi^s$. 
Write $\Psi_1=1+\alpha \varpi$ for some group homomorphism $\alpha: G_{\Sigma} \to \Oo/\varpi^{s-1}$. Hence $\Psi_1$ cuts out an abelian extension $K$ of $\bfQ$ that is of $p$-power degree. Let $\ell \in \Sigma$ be a prime different from $p$. Then $K$ can be at most tamely ramified at $\ell$, so it must be unramified unless $\ell \equiv 1 $ mod $p$. So the deformation condition (iii) guarantees that $K$ can only be ramified at $p$.  

By condition (ii) we get that $\rho|_{D_p}\cong \bmat \psi_1 & * \\ &\psi_2\emat$ with $\psi_2$ unramified at $p$ and reducing to the trivial character mod $\varpi$.
Using Lemma \ref{uniqueness of Ti} we must therefore have that $\Psi_1|_{D_p}=\psi_2$ as $\ov{\chi}|_{D_p}\neq 1$. Hence $\alpha$ must be unramified at $p$. 
Thus we have shown that  $\Psi_1$ is unramified everywhere and hence $\Psi_1=1$. Then condition (i) implies that $\Psi_2=\chi$.

We thus get that $b$ gives rise to a cohomology class in $H^1_{\Sigma}(\bfQ, \Oo/\varpi^s(\chi^{-1}))=H^1_{\Sigma}(\bfQ, W_s)$, where $W=E/\Oo(\chi^{-1})$. Using Lemma \ref{lower bound 21} and Assumption (2) we see that this  group equals $H^1_{\{p\}}(\bfQ, W_s)$. Condition (ii) now again forces $b$ to be unramified at $p$ as well, so in fact the class of $b$ lies in $H^1_{\emptyset}(\bfQ, W_s)$.

By \eqref{functoriality} we have $H^1_{\emptyset}(\bfQ, W_s)=H^1_{\emptyset}(\bfQ,W)[\varpi^s]$  and by Proposition \ref{clgroup} we have a non-canonical isomorphism
 $H^1_{\emptyset}(\bfQ, W) \cong C^{\chi^{-1}}_F$ (cf. the proof of Proposition \ref{uniqueness}). 

Define $k$ by $\#C_F^{\chi^{-1}}\cong \#\Oo/\varpi^k$. Then we conclude that $\varpi^k$ annihilates the class in $H^1_{\emptyset}(\bfQ, W)$ arising from $b$.
As $b$ is not a coboundary mod $\varpi$ by the assumption that $\rho_0$ is not split, we get that the image of $b$ in $\Oo/\varpi^s$ generates $\Oo/\varpi^s$ over $\Oo$. So, the class of $b$ generates an $\Oo$-submodule of $H^1_{\emptyset}(\bfQ, W)$ isomorphic to $\Oo/\varpi^s$. Hence $s\leq k$. If $r<\infty$, we can always take $s=r$, so this forces also $r\leq k$. If $r=\infty$, we could take $s=k+1$, which would lead to a contradiction, so $r$ cannot be infinite. 
\end{proof}

\subsection{Principality of ideals of reducibility}
Let $\tilde{\omega}:(\bfZ/p\bfZ)^{\times}\to \bfC^{\times}$ be the Teichmüller character. We denote by $\omega:G_{\Sigma} \to \bfZ_p^{\times}$  the corresponding $p$-adic Galois character. 

In this section we will prove the following result.

\begin{thm} \label{principality}
\,
\begin{enumerate} \item Suppose $C_F^{\chi}$ is a cyclic $\Oo$-module, then   the ideal $I^{\rm split}$ is principal. 

\item  Suppose that $C_F^{\chi}=0$. Assume further that at least one of the following conditions is satisfied: \begin{itemize}
\item[(i)] $e<p-1$ where $e$ is the ramification index of $p$ in $\bfQ(\chi)$ or
\item[(ii)] $\chi=\omega^s$ for some integer $s$ or \item[(iii)] $\tilde{\chi}_N(p)\neq 1$.\end{itemize} Then $I$ is principal.
\end{enumerate}
\end{thm}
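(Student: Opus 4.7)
My plan is to realize each of $I/I^2$ (resp.\ $I^{\rm split}/(I^{\rm split})^2$) as a cyclic module over $R/I$ (resp.\ $R^{\rm split}/I^{\rm split}$) via a cocycle construction, then conclude by Nakayama's lemma. Since both $R/I$ and $R^{\rm split}/I^{\rm split}$ are quotients of $\Oo$ by Corollary \ref{structure}, principality is equivalent to showing that $I/(I^2 + \varpi I)$ (resp.\ the analogous quotient for $I^{\rm split}$) is at most one-dimensional over $\bfF$, which will be accomplished by identifying it with the image of a cohomology class inside an appropriate Selmer subgroup of $H^1(G_\Sigma, \bfF(\ov\chi))$.

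To construct the cocycle, I work with $\rho^{\rm univ} \bmod I^2$ (with the same argument applying to $\rho^{\rm split} \bmod (I^{\rm split})^2$). Following \S7 of \cite{BergerKlosin13} (using Proposition \ref{infi} together with Lemma \ref{uniqueness of Ti} to fix the diagonal), I would conjugate $\rho^{\rm univ} \bmod I^2$ into the form $\bmat a & b \\ c & d \emat$ with $a \equiv 1$, $d \equiv \chi \bmod I$, and entry $c$ taking values in $I/I^2$. Multiplicativity of $\rho^{\rm univ}$ yields the relation $c(gh) = c(g) + \chi(g) c(h)$ in $I/I^2$, so $[c] \in H^1(G_\Sigma, (I/I^2)(\chi))$; and the standard theory of the reducibility ideal (Chapter 1 of \cite{BellaicheChenevierbook}) forces the values of $c$ to generate $I$. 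Hence the image of $[c]$ in $H^1(G_\Sigma, \bfF(\ov\chi))$ generates $I/(I^2 + \varpi I)$. At $\ell \in \Sigma - \{p\}$ with $\ell \equiv 1 \bmod p$, deformation condition (iii) forces unramifiedness of $[c]$; at the remaining $\ell \in \Sigma - \{p\}$, Lemma \ref{lower bound 21} applied with $\xi = \chi^{-1}$ removes the local condition (its hypothesis (ii) applies for $\ell \mid N$ since $\ov\chi$ is ramified there by primitivity of $\tilde\chi_N$, and hypothesis (i) applies otherwise by Assumption (3)). Thus $[c]$ descends to a class in a $\{p\}$-Selmer group.

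For part (1), the split condition at $p$ diagonalises $\rho^{\rm split}|_{D_p}$; this forces $c|_{D_p}$ to be a coboundary in the original basis, so the restriction of $[c]$ to $D_p$ is trivial and $[c] \in H^1_{\emptyset}(\bfQ, \bfF(\ov\chi))$. By Proposition \ref{clgroup} and \eqref{functoriality} this group is isomorphic to $C_F^\chi \otimes_\Oo \bfF$, which is at most one-dimensional by hypothesis; this completes part (1).

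For part (2), the ordinary condition is weaker: instead of splitting $\rho^{\rm univ}|_{D_p}$ it only provides a $D_p$-stable line $L_1 = R \bmat X \\ 1 \emat$ on which $D_p$ acts by $\psi_1$, with unramified quotient $\psi_2 \equiv 1$. Writing out the stability of $L_1$ modulo $I^2$ yields the equation $c|_{D_p}\cdot X = \psi_1 - d|_{D_p}$, which identifies the restriction of $[c]$ to $D_p$ with an explicit Greenberg-type local class. Since $C_F^\chi = 0$, Proposition \ref{clgroup} gives $H^1_{\emptyset}(\bfQ, \bfF(\ov\chi)) = 0$, so via a Poitou--Tate exact sequence $[c]$ injects into an ordinary-to-unramified quotient of $H^1(\bfQ_p, \bfF(\ov\chi))$, and it suffices to show this quotient is at most one-dimensional. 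Each alternative hypothesis supplies a different local computation: (i) $e < p-1$ gives Fontaine--Laffaille-style control of wild ramification on $\bfF(\ov\chi)$; (ii) $\chi = \omega^s$ permits a direct calculation via Kummer theory and Tate local duality against $H^0(\bfQ_p, \bfF(\omega^{1-s}))$; (iii) $\tilde\chi_N(p) \neq 1$ makes $\ov\chi|_{I_p}$ sufficiently nontrivial at $p$ to kill the quotient. The main obstacle is precisely this last step: transferring the ordinary filtration (which naturally lives in a local basis different from the global one) into a clean Selmer condition on $[c]$, and then giving the bound on the local quotient uniformly across the three quite different hypotheses (i)--(iii), each of which requires its own local cohomology computation.
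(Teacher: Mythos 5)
Your global framework---realize $I^{?}/\fm^{?}I^{?}$ inside a Selmer subgroup of $H^1(G_\Sigma,\bfF(\ov{\chi}))$ via the lower-left entry of the universal representation, then conclude by Nakayama---is the paper's, which packages the cocycle construction through the Bella\"iche--Chenevier GMA formalism (so that $I^{?}=C^{?}$ and $\Hom_{R^{?}}(C^{?},\bfF)$ injects into $H^1(G_\Sigma,\bfF(\ov{\chi}))$). Your handling of the places away from $p$ (Lemma \ref{lower bound 21} with $\xi=\chi^{-1}$, plus Assumption (3) and primitivity of $\tilde\chi_N$) and your part (1) (split at $p$ forces the class into $H^1_{\emptyset}(\bfQ,\bfF(\ov{\chi}))$, which is controlled by $C_F^{\chi}$) coincide with the paper's argument.

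Part (2) is where you diverge, and where the proposal is incomplete. The step you flag as the ``main obstacle''---converting the ordinary filtration into a local Selmer condition at $p$---is a red herring: no condition at $p$ is imposed at all. The paper bounds $\dim_{\bfF}H^1_{\{p\}}(\bfQ,\bfF(\ov{\chi}))\le 1$ outright, and does so \emph{globally}: inflation-restriction identifies this group with $G$-equivariant homomorphisms out of $\Gal(L/F)$ for $L$ the maximal abelian exponent-$p$ extension of $F$ unramified outside $p$; since $C_F^{\chi}=0$, class field theory (Washington, Cor.~13.6) reduces this to the $\chi$-eigenspace of the local units $M=\prod_{\fp\mid p}(1+\fp\Oo_{\fp})$ modulo the closure of the global units; a normal integral basis for $\fp$ (tame ramification) shows the $\chi$-eigenspace of the torsion-free part is exactly one-dimensional, and hypotheses (i)--(iii) enter solely to control the torsion $T$, which is locally $\omega$-isotypic. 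Your alternative---use $C_F^{\chi}=0$ to get $H^1_{\emptyset}(\bfQ,\bfF(\ov{\chi}))=0$ and inject $H^1_{\{p\}}$ into the local group at $p$---is viable and would in fact be shorter, because the unramified/finite local subgroup vanishes here and the local Euler characteristic plus Tate duality give $\dim H^1(\bfQ_p,\bfF(\ov{\chi}))=1+\dim H^0(\bfQ_p,\bfF(\ov{\chi}^{-1}\ov{\omega}))$; one then only has to kill that $H^0$ in each case. But your stated justifications do not do this: for (iii), $\tilde\chi_N(p)\neq 1$ says nothing about $\ov{\chi}|_{I_p}$ ($\chi_N$ is unramified at $p$); its role is that even when $\ov{\chi}|_{I_p}=\ov{\omega}|_{I_p}$ the Frobenius eigenvalue $\ov{\chi}_N^{-1}(\Frob_p)\neq 1$ forces $H^0(\bfQ_p,\bfF(\ov{\chi}^{-1}\ov{\omega}))=0$. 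For (ii) you must additionally invoke $C_F^{\chi^{-1}}\neq 0$ to exclude $\chi=\omega$ (Remark \ref{r4.4}); without that exclusion the local group is two-dimensional and only the paper's global argument (which uses $\mu_p\subset\mE$) closes the case. As written, the decisive step of part (2)---the actual one-dimensionality bound under each of (i)--(iii)---is either missing or incorrectly attributed.
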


\begin{rem}\label{exclusions} \,
\begin{enumerate} 
\item[(i)]  If $\chi$ is quadratic then $C_F^{\chi}=C_F^{\chi^{-1}}$, so the assumption in part (1) of Theorem \ref{principality} follows from Assumption (1) in section \ref{deformationproblem}.
\item[(ii)] Note that  $\chi$ in part (2) of the Theorem is automatically non-quadratic as we assume that $C_F^{\chi}=0$ while we have $C_F^{\chi^{-1}}\neq 0$. 
\item[(iii)] Let $\chi_N$ (resp. $\chi_p$ for later usage) be the Galois character associated with $\tilde{\chi}_N$ (resp. $\tilde{\chi}_p$).  Part (2) of Theorem \ref{principality} does not cover the case where $\chi=\omega^s \chi_N$ where $(s,p-1)=1$, so $e=p-1$, but $\chi_N$ is a non-trivial character with $\chi_N(p)=1$, which means that $\bfQ(\chi)$ is an extension of $\bfQ(\zeta_p)$ where all primes of $\bfQ(\zeta_p)$ lying over $p$ split completely in $\bfQ(\chi)/\bfQ(\zeta_p)$.
\end{enumerate} \end{rem}

\begin{proof} The universal deformations give rise to $R$-algebra homomorphisms $\rho:=\rho^{\rm univ}: R[G_{\Sigma}]\to M_2(R)$ and $\rho^{\rm split}: R^{\rm split}[G_{\Sigma}]\to M_2(R^{\rm split})$.  
 Fix $?\in \{\emptyset, {\rm split}\}$. 
The image of $\rho^{?}$ is a Generalized Matrix Algebra (GMA) in the sense of \cite{BellaicheChenevierbook} of the form $$\bmat R^? & B^? \\ C^? & R^? \emat,$$ where $B^?$ is the ideal of $R^?$ generated by $b^?(x)$ as $x$ runs over $R^?[G_{\Sigma}]$ and similarly for $C^?$.  As the residual representation is non-split we get $B^?=R^?$, so we get $I^?=B^?C^?=C^?$. Arguing as in the proof of Theorem 1.5.5. in \cite{BellaicheChenevierbook}, and using the fact that $I^?\subset \fm^?$ (where $\fm^?$ is the maximal ideal of $R^?$)  we get an injection:
$$\iota^?: \Hom_{R^?}(C^?, R^?/\fm^?)=\Hom_{R^?}(C^?, \bfF) \hookrightarrow H^1(\bfQ, \bfF(\ov{\chi})).$$

We first claim that the image lands inside $H^1_{\{p\}}(\bfQ, \bfF(\ov{\chi}))$, i.e., that it consists only of classes that are unramified outside $p$. First note that the map $\iota^?$ is given by (cf. Proof of Theorem 1.5.5 in \cite{BellaicheChenevierbook}) $$f \mapsto \left(x \mapsto \bmat a^?(x) \pmod{\fm^?} & 0\\  f(c^?(x))& d^?(x)\pmod{\fm^?}\emat\right).$$ Then it is clear that the image of $\iota^?$ is contained in $H^1_{\Sigma}(\bfQ, \bfF(\ov{\chi}))$, i.e., is unramified outside $\Sigma$. We will show that in both cases of the Theorem, this image is a one-dimensional $\bfF$-vector space. This implies that $I^?$ is a principal ideal, 
as we now explain.

Indeed, if the image of $\iota^?$ is one-dimensional, so is $\Hom_{R^?}(C^?, R^?/\fm^?)$.  The natural injection $$ \Hom_{R^?/\fm^?}(C^?/\fm^?C^?, R^?/\fm^?) =\Hom_{R^?}(C^?/\fm^?C^?, R^?/\fm^?) \hookrightarrow \Hom_{R^?}(C^?, R^?/\fm^?)$$  is an isomorphism, so this forces the $\bfF$-vector space $C^?/\fm^? C^?$ to be one-dimensional. Hence $C^?$ is a cyclic $R^?$-module by the complete version of the Nakayama's Lemma. We conclude that $C^? \cong R^?$ as $R^?$-modules, so $I^?$ is principal.

So, it remains to prove the one-dimensionality of the image of $\iota^?$. By Lemma \ref{lower bound 21} applied with $\tilde{\xi}=\chi^{-1}$  and $s=1$ we see that $H^1_{\Sigma}(\bfQ, \bfF(\ov{\chi}))=H^1_{\Sigma'}(\bfQ, \bfF(\ov{\chi}))$
 where $\Sigma'\subset \Sigma$ consists only of $p$ and those primes $\ell$ such that $\chi$  is unramified at $\ell$ and $\tilde\chi(\ell) \equiv\ell$ mod $p$. If $\ell$ is one of the latter primes then by our assumption (3) $\rho^{?}$ is unramified at $\ell$. This implies that the image of $\iota^?$ is contained in $H^1_{\{p\}}(\bfQ, \bfF(\ov{\chi}))$.
 
Now suppose we are in the case (1) of the Theorem. Then $?={\rm split}$ and the image of $\iota^{\rm split}$ is in fact contained in $H^1_{\emptyset}(\bfQ, \bfF(\ov{\chi}))$. Arguing as in the proof of Proposition \ref{uniqueness} we see that the one-dimensionality of this Selmer group is equivalent to the $\Oo$-cyclicity of  $C_F^{\chi}$. This proves part (1) of the Theorem.

From now on we study case (2) when $?=\emptyset$ and we will show that $$\dim_{\bfF}H^1_{\{p\}}(\bfQ, \bfF(\ov{\chi}))\leq 1.$$

As by Remark \ref{invariants1}, the character $\chi$ is the Teichmüller lift of $\ov{\chi}$, to ease notation below we will not distinguish between $\chi$ and $\ov{\chi}$ and always write $\chi$.
 Write $G$ for $\Gal(\bfQ(\chi)/\bfQ)$.
Consider the inflation-restriction exact sequence 
$$H^1(G, \bfF(\chi)^{\ker \chi})\to H^1(G_{\Sigma}, \bfF(\chi)) \to H^1(\ker \chi, \bfF(\chi))^G \to H^2(G, \bfF(\chi)^{\ker \chi}).$$
As $\chi$ has order prime to $p$, we see that $G$ has order prime to $p$, so the first and the last group must be zero as $\bfF(\chi)$ is killed by $p$. Hence the restriction maps gives us an isomorphism 
\be \label{infres3} {\rm res}: H^1(G_{\Sigma}, \bfF(\chi)) \cong H^1(\ker \chi, \bfF(\chi))^G,\ee where the last group equals $\Hom_G((\ker \chi)^{\rm ab}, \bfF(\chi))$. 
The isomorphism \eqref{infres3} carries classes unramified outside of $p$ to classes unramified outside $p$, which then correspond to homomorphisms in  $\Hom_G((\ker \chi)^{\rm ab}, \bfF(\chi))$ that are trivial on all inertia groups $I_{\ell}$ for all primes $\ell \neq p$. 
Hence the group $H^1_{\{p\}}(\bfQ, \bfF(\chi))$ maps into the subgroup of $\Hom_G((\ker \chi)^{\rm ab}, \bfF(\chi))$ consisting of all the homomorphisms which vanish on all $I_{\ell}$ for $\ell \neq p$. If we denote by $H$ the image of $(\ker \chi)^{\rm ab}$ in the group $G_{\{p\}}$ under the canonical map $G_{\Sigma} \twoheadrightarrow G_{\{p\}}$, then each of these homomorphisms factors through $H$. So they land in the subgroup $\Hom_G(H, \bfF(\chi))$ (which injects into $\Hom_G((\ker \chi)^{\rm ab}, \bfF(\chi))$ by left exactness of the $\Hom$-functor).

 Furthermore, as each element of $ \Hom_G(H, \bfF(\chi))$ is annihilated by $p$, we get that $\Hom_G(H, \bfF(\chi)) \cong \Hom_G(V, \bfF(\chi))$, where $V=H/H^p$. We can identify $V$ with (a quotient of) the Galois group $\Gal(L/\bfQ(\chi))$ where $L$ is the maximal abelian extension of $\bfQ(\chi)$ which is annihilated by $p$ and unramified away from primes of $\bfQ(\chi)$ lying over $p$. As one has  $\dim_{\bfF} \Hom_G(V, \bfF(\chi))=\dim_{\ov{\bfF}_p} \Hom_G(V, \ov{\bfF}_p(\chi))=\dim_{\ov{\bfF}_p}\Hom_{\bfF_p[G]}(V, \ov{\bfF}_p(\chi))=\dim_{\ov{\bfF}_p}\Hom_{\ov{\bfF}_p[G]}(V\otimes_{\bfF_p}\ov{\bfF}_p, \ov{\bfF}_p(\chi))$
 it suffices to prove that $$\dim_{\ov{\bfF}_p} \Hom_{\ov{\bfF}_p[G]}(V\otimes_{\bfF_p}\ov{\bfF}_p, \ov{\bfF}_p(\chi))\leq 1.$$

One has $$V\otimes_{\bfF_p}\ov{\bfF}_p=\bigoplus_{\varphi\in \Hom(G, \ov{\bfF}^{\times})}V^{\varphi},$$ where $$V^{\varphi}=\{v\in V\otimes_{\bfF_p} \ov{\bfF}_p\mid g\cdot v=\varphi(g) v \hs \textup{for every $g\in G$}\}.$$ 
It is clear that $$\Hom_{\ov{\bfF}_p[G]}(V\otimes_{\bfF_p}\ov{\bfF}_p, \ov{\bfF}_p(\chi))\cong \Hom_{\ov{\bfF}_p[G]}(V^{\chi}, \ov{\bfF}_p(\chi)).$$ 
Hence it suffices to show that $$\dim_{\ov{\bfF}_p} V^{\chi}\leq 1.$$

Write $S$ for the set of primes of $F=\bfQ(\chi)$ lying over $p$. Set $\Oo$ to be the ring of integers in $F$. For $\fp\in S$ let $\Oo_{\fp}$ denote the completion of $\Oo$ at $\fp$.  Let $M=\prod_{\fp\in S} (1+\fp\Oo_{\fp})$. Let $\mE$ denote the image of the global units of $F$ in $M$ and $\ov{\mE}$ the closure of $\mE$ in $M$. Using the assumption that $C_F^{\chi}=0$ by Corollary 13.6 in \cite{Washingtonbook} we get that $$M/\ov{\mE}\cong\Gal(K/H),$$ where $K$ denotes the maximal abelian pro-$p$ extension of the Hilbert class field $H$  of $F$ unramified outside of $S$. 

We have an exact sequence $$0\to M/\ov{\mE} \to \Gal(K/\bfQ(\chi)) \to \Cl(F)\to 0.$$ Tensoring with $\ov{\bfF}_p$ we get $$M/\ov{\mE}\otimes_{\bfZ_p}\ov{\bfF}_p \to V\otimes_{\bfF_p}\ov{\bfF}_p \to \Cl(F)\otimes_{\bfZ_p}\ov{\bfF}_p\to 0.$$ Using the assumption that $C_F^{\chi}=0$, we see that $(M/\ov{\mE}\otimes_{\bfZ_p}\ov{\bfF}_p)^{\chi}$ surjects onto $V^{\chi}$. As tensoring is right exact the surjection $M\to M/\ov{\mE}$ gives rise to a surjection $M\otimes_{\bfZ_p}\ov{\bfF}_p \to M/\ov{\mE}\otimes_{\bfZ_p}\ov{\bfF}_p$, hence $V^{\chi}$ is a quotient of $M\otimes_{\bfZ_p}\ov{\bfF}_p$.

It suffices to show that $\dim_{\ov{\bfF}_p}(M/\ov{\mE}\otimes_{\bfZ_p}\ov{\bfF}_p)^{\chi}\leq  1$. This would follow if we can show that $\dim_{\ov{\bfF}_p}(M\otimes_{\bfZ_p}\ov{\bfF}_p)^{\chi}\leq 1$. What we will actually show is that under our assumptions we have \be \label{MT1} \dim_{\ov{\bfF}_p}(M/T\otimes_{\bfZ_p}\ov{\bfF}_p)^{\chi}=1,\ee  where $T$ denotes the torsion submodule of $M$.  This is sufficient. Indeed, if $e<p-1$, we will show that $T=0$, hence \eqref{MT1} implies that $\dim_{\ov{\bfF}_p}(M\otimes_{\bfZ_p}\ov{\bfF}_p)^{\chi}= 1$. In the case when $\tilde\chi_N(p)\neq 1$, we show that $\chi$ does not occur in $T\otimes_{\bfZ_p}\ov{\bfF}_p$, hence again \eqref{MT1} gives us $\dim_{\ov{\bfF}_p}(M\otimes_{\bfZ_p}\ov{\bfF}_p)^{\chi}=1$. In the case when $\chi=\omega^s$, it can occur in $T$, but we will then show that $T\subset \ov{\mE}$, so proving $\dim_{\ov{\bfF}_p}(M/T\otimes_{\bfZ_p}\ov{\bfF}_p)^{\chi}=1$ again suffices for demonstrating that $\dim_{\ov{\bfF}_p}(M/\ov{\mE}\otimes_{\bfZ_p}\ov{\bfF}_p)^{\chi}\leq 1$.

Note that $M/T$ is a free $\bfZ_p$-module of rank $|G|$. We now analyze the action of $G$ on $T$. Note that $T$ results from the $p$-adic logarithm map not being an isomorphism on $1+\fp$. Combining Proposition II.5.5 and the proof of Proposition II.5.7(i) in \cite{Neukirch99} we see that $\log_p$ is an isomorphism as long as the ramification index $e$ of $p$ in $F$ is less than $p-1$. Hence if $e<p-1$, then $T=0$. 

For $\fp \in S$ we write $G_\fp$ for the stabilizer of $\fp$ in $G$. As $\chi=\chi_p\chi_N$ and $\chi_N$ is unramified at $p$, we get that $e\leq p-1$. In fact, $\bfQ(\chi_p)\subset \bfQ(\mu_p)$, so $e=p-1$ if and only if $\bfQ(\chi_p)=\bfQ(\mu_p)$. Hence we conclude that $T\neq 0$ if and only if $\bfQ(\chi_p)=\bfQ(\mu_p)$. So, assume $\bfQ(\chi_p)=\bfQ(\mu_p)$. Then we have $$1+\fp\Oo_{\fp}\cong \bfZ_p^{\#G_{\fp}}\times \mu_{p^a}.$$  Hence $$T\cong (\mu_{p^a})^{\#S}$$  and $T\otimes_{\bfZ_p}\ov{\bfF}_p=(\mu_{p^a})^{\#S} \otimes_{\bfZ_p} \bfF_p \otimes_{\bfF_p}\ov{\bfF}_p=(\mu_{p})^{\#S}  \otimes_{\bfF_p}\ov{\bfF}_p$. As $\mu_p \not\subset \bfQ_p$, the group $G_{\fp}$ acts on the corresponding copy of $\mu_p$ via $\omega$ and $G$ acts on $T\otimes_{\bfF_p}\ov{\bfF}_p$ by the character $\omega \cdot \psi$ where $\psi$ is an $\#S$-cycle (as $G$ is cyclic) in the symmetric group on $\#S$ letters. Suppose that $\chi$ occurs in $T\otimes_{\bfF_p}\ov{\bfF}_p$. Then we must have $\chi_p=\omega$ and $\chi_N=\psi$. As the order of $\psi$ is $\#S$, we get that $\bfQ(\chi_N)/\bfQ$ has  degree $\#S$. As $p$ splits into $\#S$ primes in $\bfQ(\chi_N)$ we conclude that $p$ splits completely in $\bfQ(\chi_N)$ which is equivalent to saying that $\tilde\chi_N(p)=1$. This shows that if $\chi$ appears in $T\otimes_{\bfF_p}\ov{\bfF}_p$ then $\tilde\chi_N(p)=1$. 

If $\chi=\omega^s$ there is only one prime above $p$ and either $e<p-1$
(which implies $T=0$) or $T=\mu_p$. However, in the latter case also $\mE$ contains a copy of $\mu_p$, so in the quotient $M/\ov{\mE}\otimes \ov{\bfF}_p$, the torsion part $T$ gets annihilated. 

Hence, as explained before, it now suffices to show \eqref{MT1}.
Note that to decompose $(M/T)
\otimes_{\bfZ_p}
\ov{\bfF}_p$ it is
enough to decompose $\prod_{\fp \in S} \fp\Oo_{\fp}
\otimes_{\bfZ_p} \ov{\bfF}_p$,
since
$(1+\fp\Oo_{\fp})/(\textup{torsion}) \cong
\fp\Oo_{\fp}$
as $\bfZ_p[G_{\fp}]$-modules. One has  $\dim_{\ov{\bfF}_p}\fp\Oo_{\fp}\otimes_{\bfZ_p}\ov{\bfF}_p=t$, where $t=|G_{\fp}|$ and
$\dim_{\ov{\bfF}_p}\prod_{\fp \in S} \fp \Oo_{\fp} \otimes_{\bfZ_p}\ov{\bfF}_p=r$, where $r=|G|$.

 As $|\Hom(G, \ov{\bfF}_p^{\times})|=r$, it suffices to show that $(M/T\otimes_{\bfZ_p}\ov{\bfF}_p)^{\varphi}\neq 0$ for all $\varphi \in  \Hom(G, \ov{\bfF}_p^{\times})$.
Note that $G$ being isomorphic to the image of $\chi$, hence to a subgroup of $\bfF^{\times}$, is a cyclic group. If we denote by $\zeta$ a  primitive $r$th root of unity in $\ov{\bfF}_p$ then the characters $g\mapsto \zeta^i$ for $i=0,1,\dots, r-1$ exhaust all the characters in $\Hom(G,\ov{\bfF}^{\times}_p)$.

Let $\alpha \in \fp\Oo_{\fp}$ be such that $\{g^i\alpha \mid i=0,1,...,t-1\}$ is linearly independent where $g$ is a generator of $G_{\fp}$. This is possible because the extension $F/\bfQ$ has degree prime to $p$, so  is at most tamely ramified at $p$, hence the ideal $\fp$ possesses a normal integral basis - cf. Theorem 1 in \cite{Ullom}.  

We now claim that the set $\{\gamma \alpha \mid \gamma \in G\}$ is a linearly independent set in $\prod_{\fp \in S} \fp \otimes_{\bfZ_p}\ov{\bfF}_p$. Indeed, if $x\in \fp'\Oo_{\fp'}$ for some prime $\fp'$ of $F$ over $p$, and $\delta \in G$ is an element such that $\delta x\in \fp\Oo_{\fp}$, then by the above we can write 
$$\delta x=a_0 \alpha + a_1 g\alpha + \dots + a_{t-1} g^{t-1} \alpha \quad \textup{for some $a_0, a_1, \dots, a_{t-1} \in \bfZ_p$}.$$ Hence we conclude that $$x=a_0\delta^{-1} \alpha + a_1\delta^{-1} g\alpha + \dots + a_{t-1}\delta^{-1} g^{t-1}.$$ This shows that there exist $\gamma_1,\gamma_2,\dots, \gamma_t \in G$ such that $\{\gamma_1\alpha, \dots, \gamma_t\alpha\}$ is a $\ov{\bfF}_p$-basis of $\fp'\otimes_{\bfZ_p}\ov{\bfF}_p$,  hence our claim is proved.

 With this we fix a generator $g$ of $G$ and observe that for each $i\in \{0,1,\dots, r-1\}$ the vector 
 $$v_i=\alpha +\zeta^{-i}g\alpha +\zeta^{-2i}g^2\alpha +\dots+\zeta^{-(r-1)i}g^{r-1}\alpha$$  is an eigenvector for the action of $G$ on which $G$ acts via the character $g\mapsto \zeta^i$.  
\end{proof}

\section{$R=T$ Theorem in the ordinary case} 
Our  methods for proving an $R=T$ theorem in the split and the ordinary case are different. The ordinary case will be treated in this section  using Wiles's Theorem on the $\Lambda$-adic Eisenstein congruences and $T$ will be a Hecke algebra acting on non-classical weight one cusp forms. In the split case (treated in section \ref{sect8}) we will construct  Eisenstein congruences with classical weight one cusp forms directly (i.e., without using Wiles' result) - see also Remark \ref{rem5.5}. %for a comparison of these cases.
\subsection{$\Lambda$-adic Eisenstein congruences} Let $\theta: (\bfZ/Np\bfZ)^\times \to \bfC^\times$ be a primitive even Dirichlet character. Let $\Oo' \subset \ov{\bfQ}_p$ be the valuation ring of any finite extension of $\bfQ_p$ containing the values of $\theta$. Put $\Lambda=\Oo'[[T]]$ and let $\fX=\{(k, \zeta)\mid k \in \bfZ, k \geq 1, \zeta \in \mu_{p^{\infty}}\}$. For every $(k, \zeta) \in \fX$ we have an $\Oo'$-algebra homomorphism $\nu_{k, \zeta}: \Lambda \to \Oo'[\zeta]$ induced by $\nu_{k, \zeta}(1+T)=\zeta u^{k-2}$ where $u=\epsilon(\gamma)$ for $\gamma$ a topological generator of $\Gal(\bfQ_{\infty}/\bfQ)$ and $\epsilon: \Gal(\bfQ_{\infty}/\bfQ)\xrightarrow{\sim} 1+p\bfZ_p$ the $p$-adic cyclotomic character. Here $\bfQ_{\infty}$ is the unique $\bfZ_p$-extension of $\bfQ$. Note that under our assumption on the conductor of $\theta$ we have $\bfQ(\theta) \cap \bfQ_{\infty}=\bfQ$.

We fix an algebraic closure $\ov{F}_{\Lambda}$ of $F_{\Lambda}$, the fraction field of $\Lambda$, and regard all finite extensions of $F_{\Lambda}$ as embedded in that algebraic closure.
For $L\subset \ov{F}_{\Lambda}$ a finite extension of $F_{\Lambda}$, and $\Oo_L$ the integral closure of $\Lambda$ in $L$ put $\fX_L=\{\varphi: \Oo_L \to \ov{\bfQ}_p \text{ extending some } \nu_{k, \zeta}\}$.

We define an $\Oo_L$-adic modular form of tame level $N$ and character $\theta$ to be a collection of Fourier coefficients $c(n, \mF)$, $n\in \bfZ_{\geq 0}$ with the property that for all but finitely many $\varphi \in \fX_L$ extending $\nu_{k, \zeta}$  with $(k, \zeta) \in \fX$ and $\zeta$ of exact order $p^{r-1}$ for $r \geq 1$ there is an element $\mF_{\varphi}\in M_k(Np^r, \theta \tilde{\omega}^{2-k} \chi_{\zeta}, \varphi(\Oo_L))$ whose $n$th Fourier coefficient equals $\nu_{k, \zeta}(c(n, \mF))$. Here $\chi_\zeta$ is the Dirichlet character defined by mapping the image of $1+p$ in $(\bfZ/p^r\bfZ)^\times\cong (\bfZ/p\bfZ)^\times \times \bfZ/p^{r-1}\bfZ$ to $\zeta$. $\mF$ is called a \emph{cusp form} if all the $\mF_{\varphi}$ are cusp forms.  We denote by $\mM_{\OL}(N, \theta)$ the $\Oo_L$-torsion free module consisting of $\Oo_L$-adic modular forms having character $\theta$ and we set $\mM_{L}(N, \theta)=\mM_{\OL}(N, \theta)\otimes_{\OL} L$  and similarly for $\mS_L(N, \theta)$. 
This module has a natural action of Hecke operators and we denote by $\mM^0_{\OL}(N, \theta)$ the submodule $e\mM_{\OL}(N, \theta)$ cut out by applying the Hida ordinary projector $e$.  The corresponding subspace of cusp forms will be denoted by $\mS^0_{\OL}(N, {\theta})$.

Let $\bfT$ denote the $\Lambda$-algebra generated by all the Hecke operators $T_n$, $n \in \bfZ_+$ acting on $\mS^0_{{\Lambda}}({N}, \theta)$. 
By a result of Hida (Theorem 3.1  in \cite{Hida86a})  $\bfT$ is finitely generated and free as a $\Lambda$-module.
Newforms are then defined in an obvious way,  see \cite{Wiles88} section 1.5.

Fix $L\subset \ov{F}_{\Lambda}$ to be a finite extension of $F_{\Lambda}$ over which all newforms in $\mS^0_{{{\Lambda}}}({N}, {\theta}) \otimes_\Lambda \ov{F}_{\Lambda}$ are defined. 
Let $\mN'$ be the set of all newforms in $\mS^0_L(N, \theta)$ and fix a complete set $\mS'\subset \mN'$ of representatives of the Galois conjugacy classes  (over $F_{\Lambda}$) of all the elements of $\mN'$.
For $\mF\in \mN'$, if we denote by $L_{\mF}$ the extension of $F_{\Lambda}$ generated by the Fourier coefficients of $\mF \in  \mS^0_{{{\Lambda}}}({N}, {\theta}) \otimes_\Lambda \ov{F}_{\Lambda}$  then $\bfT$ can be naturally  viewed (by mapping an operator $t$ to the tuple $(c(1,t\mF))_{\mF}$) as a subring of the $F_{\Lambda}$-algebra $\prod_{\mF\in \mS'} L_{\mF}$ 
and one has $\bfT\otimes_{\Lambda}F_{\Lambda} =\prod_{\mF\in \mS'} L_{\mF}$ (cf. \cite{Wiles90}, eq. (4.1)). In fact, we have $\bfT \subset \prod_{\mathcal{F}\in \mS'} \Oo_{L_{\mF}}$ as $c(1,t\mF)$ are integral over $\Lambda$,  see e.g. \cite{Wiles88} p. 546.
\begin{definition} \label{cell} For each prime $\ell \neq p$ put $c_{\ell}:=1+\theta(\ell) \ell (1+T)^{a_{\ell}},$ where $a_{\ell} \in \bfZ_p$ is defined by $\ell=\tilde \omega(\ell) (1+p)^{a_{\ell}}$. Put $c_p:=1$. \end{definition}
These are the Hecke eigenvalues of a $\Lambda$-adic Eisenstein series  with constant term given by $L_p(s, \theta)/2$.
Here the Kubota-Leopoldt $p$-adic $L$-function $L_p(s, \theta)$ is an analytic function for $s \in \bfZ_p - \{1 \}$ (and even at $s=1$ if $\theta \neq 1$), which satisfies the interpolation property \be \label{Lpinterpolation} L_p(1-k, \theta)=(1-\theta \tilde \omega^{-k}(p) p^{k-1}) L(1-k, \theta \tilde \omega^{-k})\ee for $k \in \bfZ_{\geq 1}$. Iwasawa showed that there exists a unique power series $G_\theta(T) \in \Lambda$ such that $$L_p(1-s, \theta)=G_\theta(u^s-1).$$ Note that in general there is a denominator $H_\theta$ but for us it is identically 1 since $\theta$ is of type S (in the sense that $\bfQ(\theta) \cap \bfQ_{\infty}=\bfQ$).

Put $\hat{G}_\theta(T)=G_{\theta \tilde \omega^2}(u^2(1+T)-1)$ and $\hat{G}^0_\theta=\pi^{- \mu} \prod_{\zeta \in \mu_{p^\infty}} (1+T-\zeta u^{-1})^{-s_\zeta} \hat{G}_\theta(T)$, where $\pi$ is a uniformizer of $\bfZ_p[\theta]$. Here $\pi^{\mu}$ (respectively  $(1+T-\zeta u^{-1})^{s_\zeta}$) is the highest power of $\pi$ (respectively $ (1+T-\zeta u^{-1})$) common to all coefficients of $\hat{G}_\theta$. 

\begin{definition}
Define the Eisenstein ideal $J\subset \bfT$ to be the ideal generated by $T_\ell-c_{\ell}$ for all primes $\ell$ and by $\hat{G}^0_{\theta}(T)$. 
\end{definition}

We have the following result due to Wiles.
\begin{thm} [Wiles, \cite{Wiles90}, Theorem 4.1] \label{Wiles2} If $\theta \neq \tilde{\omega}^{-2}$ then one has $$\bfT/J \cong \Lambda/\hat{G}^0_{\theta}(T).$$ \end{thm}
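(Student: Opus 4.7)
The plan is to follow Wiles's argument from \cite{Wiles90}. The surjection $\Lambda/\hat{G}^0_\theta \twoheadrightarrow \bfT/J$ is essentially by definition: since $J$ contains $T_\ell - c_\ell$ for every $\ell$ and $\bfT$ is generated as a $\Lambda$-algebra by the $T_\ell$, one has $\bfT = \Lambda + J$ inside $\bfT$, so $\bfT/J = \Lambda/(J\cap \Lambda)$; the inclusion $J\cap \Lambda \supseteq (\hat{G}^0_\theta)$ is clear from $\hat{G}^0_\theta \in J$. The substantive content is the reverse inclusion $J\cap\Lambda \subseteq (\hat{G}^0_\theta)$, equivalently the injectivity of the map above.

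To prove this I would first construct a $\Lambda$-adic ordinary Eisenstein series $\mE_\theta \in \mM^0_\Lambda(N,\theta)$ with $T_\ell$-eigenvalue $c_\ell$ for every $\ell$ and constant term at $\infty$ a fixed unit multiple of $\hat{G}_\theta(T)$; $\mE_\theta$ is obtained by Hida-style $p$-adic interpolation of the classical Eisenstein series $E_k(\theta_1,\theta_2)$, whose constant terms are exactly the special $L$-values interpolated by $L_p$ through \eqref{Lpinterpolation}. The eigensystem $c_\bullet$ extends to a $\Lambda$-algebra character $\tilde\bfT \to \Lambda$ on the full ordinary Hida algebra $\tilde\bfT$ acting on $\mM^0_\Lambda(N,\theta)$, and the cuspidal subalgebra $\bfT$ sits inside $\tilde\bfT$ so that the image of $J\cap\bfT$ under this character coincides with the congruence ideal measuring congruences of cusp forms to $\mE_\theta$. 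A Hecke-equivariant pairing argument -- pinning the depth of any congruence to the constant term of $\mE_\theta$ after removal of the trivial-zero factors $(1+T-\zeta u^{-1})^{s_\zeta}$ and the $\pi$-content $\pi^\mu$ -- gives the bound $J\cap\Lambda\subseteq (\hat{G}^0_\theta)$. To produce matching congruences in the opposite direction (showing that $\bfT/J$ really is at least as large as $\Lambda/\hat{G}^0_\theta$), one specialises at arithmetic primes $\nu_{k,\zeta}$ with $k\geq 2$, applies the classical Deligne--Serre lemma to convert each congruence of $L$-values into a classical cuspidal eigensystem congruent to $\mE_\theta$ of depth $v_\varphi(\hat{G}^0_\theta)$, and invokes Hida's control theorem (identifying $\bfT\otimes_{\Lambda,\varphi}\varphi(\Oo_L)$ with the classical ordinary cuspidal Hecke algebra) to glue these classical congruences into a $\Lambda$-adic one of the required depth.

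The main obstacle is the precise bookkeeping of the passage from $\hat{G}_\theta$ to $\hat{G}^0_\theta$: the factors $\pi^{-\mu}$ and $(1+T-\zeta u^{-1})^{-s_\zeta}$ represent the $\mu$-part and the trivial-zero contributions, neither of which can give rise to genuine cuspidal congruences, and one must verify that exactly these corrections account for the drop in congruence depth -- otherwise one ends up with a proper inequality rather than an isomorphism. The assumption $\theta\neq\tilde\omega^{-2}$ enters precisely here: in the excluded case the trivial-zero structure degenerates (essentially because the Eisenstein eigensystem $c_\bullet$ collides with itself under an Atkin--Lehner-type involution) and $\mE_\theta$ fails to sit as a non-zero honest ordinary $\Lambda$-adic Eisenstein component of $\mM^0_\Lambda(N,\theta)$, so the extraction $\hat{G}^0_\theta$ would not match the true congruence ideal.
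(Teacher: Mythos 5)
First, a point of comparison: the paper does not prove this statement at all --- it is imported verbatim as Theorem~4.1 of \cite{Wiles90} and used as a black box --- so your sketch can only be judged as a reconstruction of Wiles's argument, not against anything internal to the paper.

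Your opening reduction is correct: $\bfT=\Lambda+J$, so the theorem is exactly $J\cap\Lambda=(\hat{G}^0_{\theta})$, and only $J\cap\Lambda\subseteq(\hat{G}^0_{\theta})$ has content. From there the logic goes astray. An argument that ``pins the depth of any congruence to the constant term of $\mE$'' bounds congruences from \emph{above} and hence proves that $\bfT/J$ is a quotient of $\Lambda/\hat{G}^0_{\theta}$, i.e.\ the inclusion $(\hat{G}^0_{\theta})\subseteq J\cap\Lambda$, which is already immediate from $\hat{G}^0_{\theta}\in J$; it cannot give the inclusion you assign to it. Your second argument is then labelled ``the opposite direction,'' but the statement you attach to it (that $\bfT/J$ is at least as large as $\Lambda/\hat{G}^0_{\theta}$) is again precisely $J\cap\Lambda\subseteq(\hat{G}^0_{\theta})$: both halves aim at the same inclusion and neither lands. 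The second half also has a genuine gap on its own terms: the Deligne--Serre lemma produces an eigenform congruent to a given form only modulo a uniformizer, not modulo $\varpi^{v_\varphi(\hat{G}^0_\theta)}$, so it cannot supply classical congruences ``of depth $v_\varphi(\hat{G}^0_\theta)$'' weight by weight, and even granting these, specialisation-by-specialisation congruences do not automatically assemble into the ideal-theoretic identity $J\cap\Lambda=(\hat{G}^0_{\theta})$ in $\Lambda$. Wiles's actual proof works entirely $\Lambda$-adically: he produces an auxiliary ordinary $\Lambda$-adic form with unit constant term, subtracts the appropriate multiple of it from the Eisenstein family and controls the constant terms at the remaining cusps after applying the ordinary projector, obtaining a $\Lambda$-adic \emph{cusp} form $\mathcal{G}$ with $c(n,\mathcal{G})\equiv c(n,\mE)\bmod \hat{G}^0_{\theta}$ for $n\geq 1$ and $c(1,\mathcal{G})=1$; the $\Lambda$-linear functional $t\mapsto c(1,t\mathcal{G})$ then kills $J$ and hits $1$, giving a surjection $\bfT/J\twoheadrightarrow\Lambda/\hat{G}^0_{\theta}$ which, combined with the trivial surjection the other way and Noetherianity, forces the isomorphism. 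Finally, your account of the hypothesis $\theta\neq\tilde{\omega}^{-2}$ is not the right one: since $\hat{G}_\theta(T)=G_{\theta\tilde{\omega}^2}(u^2(1+T)-1)$, the excluded case is exactly the one where $\theta\tilde{\omega}^2$ is trivial, so the relevant Kubota--Leopoldt function is the $p$-adic zeta function with its pole, $G$ acquires a nontrivial denominator $H$, and the constant term of the Eisenstein family fails to lie in $\Lambda$; no Atkin--Lehner self-collision is involved.
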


Let $\theta=\tilde{\chi} \tilde{\omega}^{-1}$ 
and put $\Oo'=\Oo$. (Note that the values of $\omega$ are already contained in $\bfZ_p$.)

\begin{rem} \label{r4.4} Note that  the  theorem rules out $\chi=\omega^{-1}$ while Lemma \ref{noexceptionalzeroes} below  rules out $\chi=\omega$. However, in both cases $C_F^{\chi^{-1}}=0$ by \cite{Washingtonbook} Proposition 6.16 and Theorem 6.17, so these cases are not relevant for our deformation problem as we assume in section \ref{The residual representation} that $C_F^{\chi^{-1}} \neq 0$. 
\end{rem}

\begin{lemma} \label{noexceptionalzeroes}
Let $\theta=\tilde{\chi} \tilde{\omega}^{-1}$. Assume $\tilde{\chi} \tilde{\omega}^{-1}(p) \neq 1$. Then one has $\mu=s_\zeta=0$ for all $\zeta \in \mu_{p^\infty}$ in $\hat{G}^0_{\theta}$.
\end{lemma}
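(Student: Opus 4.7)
The statement has two parts, which I would address separately: the vanishing of the $\mu$-invariant of $\hat{G}_\theta$, and the non-vanishing of $\hat{G}_\theta$ at the points $T=\zeta u^{-1}-1$. The first reduces to Ferrero--Washington, while the second is a computation using the interpolation formula \eqref{Lpinterpolation}.

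For $\mu=0$: the power series $\hat{G}_\theta(T)=G_{\theta\tilde\omega^2}(u^2(1+T)-1)$ is the image of $G_{\theta\tilde\omega^2}$ under the continuous $\Oo$-algebra automorphism of $\Lambda$ sending $T\mapsto u^2(1+T)-1$, so $\mu(\hat{G}_\theta)=\mu(G_{\theta\tilde\omega^2})$. The hypothesis $\tilde\chi\tilde\omega^{-1}(p)\neq 1$ rules out $\chi=\omega$ and Theorem~\ref{Wiles2} rules out $\chi=\omega^{-1}$ (cf.\ Remark~\ref{r4.4}), so $\theta\tilde\omega^2=\tilde\chi\tilde\omega$ is a non-trivial character of type~$S$. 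The Ferrero--Washington theorem then gives $\mu(G_{\theta\tilde\omega^2})=0$.

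For $s_\zeta=0$ for all $\zeta\in\mu_{p^\infty}$: I compute
\begin{equation*}
\hat{G}_\theta(\zeta u^{-1}-1)=G_{\theta\tilde\omega^2}(u\zeta-1),
\end{equation*}
and identify this value, via the twisted interpolation property of the Iwasawa power series, with
\begin{equation*}
\bigl(1-(\tilde\chi\chi_\zeta)(p)\bigr)\,L(0,\tilde\chi\chi_\zeta),
\end{equation*}
which is \eqref{Lpinterpolation} applied to the character $\tilde\chi\tilde\omega\chi_\zeta$ at $k=1$. For $\zeta\neq 1$: $\chi_\zeta$ has non-trivial $p$-power conductor, so $p$ divides the conductor of $\tilde\chi\chi_\zeta$, whence $(\tilde\chi\chi_\zeta)(p)=0$ and the Euler factor is $1$. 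Since $\chi_\zeta$ is even (having $p$-power order with $p$ odd) and $\tilde\chi$ is odd, $\tilde\chi\chi_\zeta$ is odd, so $L(0,\tilde\chi\chi_\zeta)=-B_{1,\tilde\chi\chi_\zeta}\neq 0$. For $\zeta=1$: the value is $(1-\tilde\chi(p))L(0,\tilde\chi)$ with $L(0,\tilde\chi)=-B_{1,\tilde\chi}\neq 0$; to see $\tilde\chi(p)\neq 1$, note that if $\tilde\chi_p\neq 1$ then $\tilde\chi(p)=0\neq 1$, while if $\tilde\chi_p=1$ the standing assumption $\ov\chi|_{D_p}\neq 1$ from Section~\ref{The residual representation} forces $\ov{\tilde\chi}_N(p)\neq 1$, so $\tilde\chi(p)=\tilde\chi_N(p)\neq 1$.

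\textbf{Main obstacle.} The technical heart of the argument is the twisted interpolation identity for $\zeta\neq 1$: substituting $T\mapsto\zeta u^{k-2}-1$ in $G_\psi$ corresponds to twisting $\psi$ by the finite-order character $\chi_\zeta$ of $1+p\bfZ_p$, but unwinding the normalization requires care. Modulo this standard Iwasawa-theoretic input, the argument reduces to non-vanishing of Dirichlet $L$-values $L(0,\cdot)$ for odd primitive characters together with elementary bookkeeping of conductors and Euler factors, where the hypothesis and the standing assumption from Section~\ref{The residual representation} together eliminate the only potentially dangerous case --- the trivial zero at the weight-one specialization.
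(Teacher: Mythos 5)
Your proof follows the same overall strategy as the paper's (Ferrero--Washington for $\mu=0$, then evaluation of $\hat{G}_\theta$ at $T=\zeta u^{-1}-1$ via the interpolation formula), but it diverges at the one substantive computation, and the divergence is worth recording. Applying \eqref{Lpinterpolation} with $k=1$ to the character $\tilde\chi\tilde\omega\chi_\zeta$ gives, as you write, the Euler factor $1-(\tilde\chi\chi_\zeta)(p)$ (primitive character); the paper instead records the factor as $1-\tilde\chi\tilde\omega^{-1}\chi_\zeta(p)$, which is not what its own formula \eqref{Lpinterpolation} produces and is off by a twist by $\tilde\omega$ (one can also see that $1-\tilde\chi(p)$ is the right factor at $\zeta=1$ by comparing with the constant term of the ordinary $p$-stabilisation of $E_1(1,\tilde\chi)$). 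With the correct factor, the lemma's stated hypothesis $\tilde\chi\tilde\omega^{-1}(p)\neq 1$ is not the condition that excludes the trivial zero at $\zeta=1$: the dangerous case is $\tilde\chi_p=1$ together with $\tilde\chi_N(p)=1$, where the stated hypothesis holds vacuously (since $\tilde\chi\tilde\omega^{-1}$ is then ramified at $p$) yet $1-\tilde\chi(p)=0$. You close that case by invoking instead the standing assumption $\ov{\chi}|_{D_p}\neq 1$ from section \ref{The residual representation}, which does give $\tilde\chi(p)\neq 1$. This is the right fix in the setting where the lemma is applied, but be explicit that you are substituting a different hypothesis for the one in the statement: as literally stated, the lemma's hypothesis does not suffice for your (correct) Euler factor, so what you have proved is a corrected version of the lemma valid under the paper's standing assumptions. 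The remaining steps --- that $T\mapsto u^2(1+T)-1$ is a $\mu$-preserving automorphism of $\Lambda$, that $(\tilde\chi\chi_\zeta)(p)=0$ for $\zeta\neq 1$ by the conductor, and that $L(0,\tilde\chi\chi_\zeta)\neq 0$ for the odd character $\tilde\chi\chi_\zeta$ --- are all correct and agree with the paper's proof.
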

\begin{proof}
 The $\mu$-invariant is zero by \cite{FW79}. 
Let $\zeta \in \mu_{p^\infty}$. We need to show that $\hat{G}_\theta$ does not have a zero at $T=\zeta u^{-1}-1$. We calculate  $\hat{G}_\theta(\zeta u^{-1}-1)=G_{\theta \tilde \omega^2}(u^2 \zeta u^{-1}-1)$. By (1.4)  in \cite{Wiles90} and \eqref{Lpinterpolation}  this equals $L_p(0, \tilde \chi \tilde \omega \chi_\zeta)=L(0, \tilde \chi \chi_\zeta)(1-\tilde \chi \tilde \omega^{-1} \chi_\zeta(p))$. 

Since $\tilde \chi \chi_\zeta$ is odd (as $\chi_\zeta(-1)=+1$ since $-1$ is not congruent to 1 modulo $p$) we have $L(0, \tilde \chi \chi_\zeta) \neq 0$ by the class number formula. Since $\tilde \chi$ is of order prime to $p$ the Euler factor could only vanish for $\zeta=1$ and if $(\tilde \chi \tilde \omega^{-1})(p)=1$.
\end{proof}

Theorem \ref{Wiles2} and  Lemma \ref{noexceptionalzeroes} imply the following corollary (note that by Remark \ref{r4.4} we have $\chi \neq \omega^{\pm 1}$ ).
\begin{cor}\label{Wiles} 
One has $$\bfT/J \cong \Lambda/\hat{G}_{\tilde{\chi} \tilde{\omega}^{-1}}(T).$$
\end{cor}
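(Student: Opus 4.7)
The plan is to deduce the corollary as a direct consequence of Theorem \ref{Wiles2} and Lemma \ref{noexceptionalzeroes}, with the only substantive work being the verification that the hypotheses of both results are met in our setting with $\theta := \tilde{\chi}\tilde{\omega}^{-1}$.

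First I would check the hypothesis $\theta \neq \tilde{\omega}^{-2}$ of Theorem \ref{Wiles2}. This is equivalent to $\tilde{\chi} \neq \tilde{\omega}^{-1}$, i.e., $\chi \neq \omega^{-1}$. By Remark \ref{r4.4} this case would force $C_F^{\chi^{-1}} = 0$, contradicting our standing Assumption (1) of Section \ref{The residual representation}. Hence the hypothesis is met and Theorem \ref{Wiles2} gives $\bfT/J \cong \Lambda/\hat{G}^0_{\tilde{\chi}\tilde{\omega}^{-1}}(T)$.

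Next I would verify the hypothesis $\tilde{\chi}\tilde{\omega}^{-1}(p) \neq 1$ needed for Lemma \ref{noexceptionalzeroes}. Writing $\tilde{\chi} = \tilde{\chi}_N \tilde{\chi}_p$, the $p$-part of $\tilde{\chi}\tilde{\omega}^{-1}$ is $\tilde{\chi}_p\tilde{\omega}^{-1}$. If $\tilde{\chi}_p \neq \tilde{\omega}$, then $p$ divides the conductor of $\tilde{\chi}\tilde{\omega}^{-1}$, so by the usual convention $\tilde{\chi}\tilde{\omega}^{-1}(p) = 0 \neq 1$. If $\tilde{\chi}_p = \tilde{\omega}$, then $\tilde{\chi}\tilde{\omega}^{-1} = \tilde{\chi}_N$ and the value at $p$ would equal $1$ only if $\tilde{\chi}_N(p) = 1$; the subcase $\tilde{\chi}_N$ trivial yields $\chi = \omega$, which is again excluded by Remark \ref{r4.4}.

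Applying Lemma \ref{noexceptionalzeroes} now yields $\mu = 0$ and $s_\zeta = 0$ for every $\zeta \in \mu_{p^\infty}$ in the formula for $\hat{G}^0_{\theta}$. Unpacking the definition
\[
\hat{G}^0_\theta(T) = \pi^{-\mu}\prod_{\zeta \in \mu_{p^\infty}}(1+T-\zeta u^{-1})^{-s_\zeta}\,\hat{G}_\theta(T),
\]
this collapses to $\hat{G}^0_{\tilde{\chi}\tilde{\omega}^{-1}}(T) = \hat{G}_{\tilde{\chi}\tilde{\omega}^{-1}}(T)$, and substituting into the isomorphism of Theorem \ref{Wiles2} gives the claim. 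The only real obstacle is the case-analysis to exclude $\tilde{\chi}\tilde{\omega}^{-1}(p) = 1$; once $\chi \notin \{\omega, \omega^{-1}\}$ has been eliminated via Remark \ref{r4.4}, no further computation is needed.
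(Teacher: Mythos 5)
Your route is exactly the paper's route: the paper's entire proof of this corollary is the sentence preceding its statement, namely that Theorem \ref{Wiles2} and Lemma \ref{noexceptionalzeroes} give the claim once one notes, via Remark \ref{r4.4}, that $\chi\neq\omega^{\pm1}$. Your verification that $\theta\neq\tilde{\omega}^{-2}$ (equivalently $\chi\neq\omega^{-1}$, excluded because it would force $C_F^{\chi^{-1}}=0$) is precisely the intended argument.

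There is, however, a hole in your case analysis for the hypothesis $\tilde{\chi}\tilde{\omega}^{-1}(p)\neq 1$ of Lemma \ref{noexceptionalzeroes}. You correctly reduce to the situation $\tilde{\chi}_p=\tilde{\omega}$ and $\tilde{\chi}_N(p)=1$, but you then dispose only of the subcase $\tilde{\chi}_N=1$. The subcase $\tilde{\chi}_p=\tilde{\omega}$ with $\tilde{\chi}_N$ non-trivial and $\tilde{\chi}_N(p)=1$ is left unaddressed: there $\chi=\omega\chi_N\neq\omega^{\pm1}$, so Remark \ref{r4.4} gives nothing, the Euler factor $1-\tilde{\chi}\tilde{\omega}^{-1}(p)$ vanishes, $\hat{G}_{\tilde{\chi}\tilde{\omega}^{-1}}$ acquires an exceptional zero at $T=u^{-1}-1$ (i.e., $s_1>0$), and hence $\hat{G}^0\neq\hat{G}$, so the displayed isomorphism cannot be deduced by this argument. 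To be fair, the paper's own one-line justification (invoking only $\chi\neq\omega^{\pm1}$) silently skips the same subcase --- compare the proof of Corollary \ref{surjT1}, where it is asserted that $\chi\neq\omega$ implies $\tilde{\chi}\tilde{\omega}^{-1}(p)\not\equiv 1 \bmod \varpi$, and Remark \ref{exclusions}(iii), where the closely analogous configuration $\chi=\omega^s\chi_N$ with $\chi_N(p)=1$ is explicitly flagged as not covered for Theorem \ref{principality}. So you have not introduced an error that the paper avoids, but as written your proof (like the paper's) needs either an additional hypothesis ruling out $\tilde{\chi}_p=\tilde{\omega}$ with $\tilde{\chi}_N(p)=1$, or a separate treatment of that case.
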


\subsection{The weight one specialisations}
We set $\bfT_k:= \bfT/\ker \nu_{k,1} \bfT$. It is a well-known result of Hida that for $k\geq 2$ the algebra $\bfT_k$ coincides with the Hecke algebra acting on the space of classical modular forms $S^0_k(Np, \tilde\chi)$, and that all the specialisations are classical, but this is not the case in weight 1. We write $J_{k}$ for the image of $J$ in $\bfT_{k}$ under the map $\bfT\to \bfT/\ker \nu_{k,1}\bfT$ which  we will also denote by $\nu_{k,1}$.

\begin{rem} \label{rem5.5}
A classical specialisation in weight 1  corresponds to Galois representations with finite image, which can only happen if  $\chi=\chi_{F/\bfQ}$ for an imaginary quadratic field $F$, as explained in Remark \ref{CM1}. In the ordinary case such characters are excluded by Remark \ref{exclusions}(ii). In section \ref{sect8} we prove that split deformations of $\rho_0$ with $\chi=\chi_{F/\bfQ}$  are modular by classical CM-forms using a different method.
\end{rem}

 The following lemma will allow us  to later relate $J_1$ to the reducibility ideal.

\begin{lemma} \label{generation of Jk}
The ideal $J_k$ is generated by the set $$S=\{T_{\ell}-1-\tilde\chi(\ell)\tilde{\omega}^{1-k}(\ell)\ell^{k-1}\mid \ell\neq p\} \cup \{T_p-1\}.$$ 
\end{lemma}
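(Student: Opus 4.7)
The plan is to show both containments $(S)\subseteq J_k$ and $J_k\subseteq (S)$, where $(S)$ denotes the $\bfT_k$-ideal generated by $S$. The first inclusion is immediate from a direct computation of the images of the generators of $J$ under $\nu_{k,1}$. For $\ell\neq p$, combining $\nu_{k,1}(1+T)=u^{k-2}$ with the identity $u^{a_\ell}=\ell/\tilde\omega(\ell)$ (implicit in the choice $u=1+p$) and $\theta=\tilde\chi\tilde\omega^{-1}$ gives, after a routine calculation, $\nu_{k,1}(c_\ell)=1+\tilde\chi(\ell)\tilde\omega^{1-k}(\ell)\ell^{k-1}$, so $\nu_{k,1}(T_\ell-c_\ell)$ is precisely the element of $S$ associated to $\ell$. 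For $\ell=p$, the definition $c_p=1$ makes the specialisation $T_p-1\in S$ immediate.

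The reverse inclusion is the substance of the lemma. Since $J_k$ is generated by the images of the generators of $J$, it also contains the constant $\nu_{k,1}(\hat G^0_\theta)\cdot 1\in\bfT_k$, which must be shown to lie in $(S)$. My approach is structural: because $\bfT$ is generated as a $\Lambda$-algebra by the $T_\ell$'s, the quotient $\bfT/(T_\ell-c_\ell)_\ell$ is automatically a cyclic $\Lambda$-module, of the form $\Lambda/\fa$ for a unique ideal $\fa\subseteq\Lambda$. A short diagram chase using Corollary \ref{Wiles} and the decomposition $J=(T_\ell-c_\ell)_\ell+(\hat G^0_\theta)$ forces the inclusion $\fa\subseteq\hat G^0_\theta\Lambda$. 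To upgrade this to the equality $\fa=\hat G^0_\theta\Lambda$ (which is what is needed), I would specialise at arithmetic primes of $\Lambda$ of weight $k'\geq 2$: Hida's theorem identifies the specialisation of $\bfT$ with the classical ordinary weight-$k'$ cuspidal Hecke algebra, where Mazur--Ribet-type sharpness of Eisenstein congruences forces the image of $\fa$ to have exactly the order $\nu_{k',1}(\hat G^0_\theta)\Oo$. Density of arithmetic primes in $\Spec\Lambda$ then promotes this to the $\Lambda$-adic equality $\fa=\hat G^0_\theta\Lambda$, giving $\hat G^0_\theta\in(T_\ell-c_\ell)_\ell\bfT$; specialising through $\nu_{k,1}$ yields $\nu_{k,1}(\hat G^0_\theta)\in(S)$.

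The main obstacle is precisely this upgrade from $\fa\subseteq\hat G^0_\theta\Lambda$ to equality. Corollary \ref{Wiles} as stated controls only the cardinality of the quotient $\bfT/J$ and leaves open a strict inclusion, which would correspond to Eisenstein congruences being deeper than the $L$-value predicts; ruling this out seems to require the sharpness provided by classical weight-$k'$ Eisenstein congruence theory for $k'\geq 2$. Once that is granted, the specialisation at weight $k=1$ (the case of interest) is purely formal.
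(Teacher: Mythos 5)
Your verification that $\nu_{k,1}(c_\ell)=1+\tilde\chi(\ell)\tilde\omega^{1-k}(\ell)\ell^{k-1}$ for $\ell\neq p$ is exactly the computation in the paper's proof, and together with the (standard) observation that $J_k$, being the image of $J$ under the surjection $\bfT\to\bfT_k$, is generated over $\bfT_k$ by the images of any generating set of $J$, this computation is the \emph{entirety} of the paper's argument: the paper simply takes $A=\{T_\ell-c_\ell\}$ as a generating set of $J$, i.e.\ it treats the additional generator $\hat{G}^0_\theta$ in the definition of $J$ as redundant, without offering an argument for that redundancy.

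You have correctly identified that this redundancy is the only non-formal point. Writing $\bfT/(T_\ell-c_\ell)_\ell\cong\Lambda/\fa$, it amounts to the equality $\fa=\hat{G}^0_\theta\Lambda$, whereas Corollary \ref{Wiles} (Wiles's theorem as quoted) yields only $\fa\subseteq\hat{G}^0_\theta\Lambda$, exactly as you observe. The trouble is that your proposed upgrade to equality is not a proof but a restatement of the goal: the ``Mazur--Ribet-type sharpness'' of weight-$k'$ Eisenstein congruences for $k'\geq 2$, i.e.\ the upper bound $\#\bigl(\bfT_{k'}/(T_\ell-c_\ell)_\ell\bigr)\leq\#\,\Oo/\nu_{k',1}(\hat{G}^0_\theta)$, is precisely the reverse divisibility $\hat{G}^0_\theta\Lambda\subseteq\fa$ specialised at $\nu_{k',1}$, and you supply no argument or reference for it; it does not follow from the congruence-construction (lower-bound) direction, and in the Mazur--Wiles/Wiles framework it is recovered only a posteriori by closing the loop with the analytic class number formula. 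So as written your proof of $J_k\subseteq(S)$ assumes its essential content. (Your final density step is fine once sharpness is granted: writing $\fa=\hat{G}^0_\theta\fb$, the condition $\fb+\ker\nu_{k',1}=\Lambda$ for infinitely many $k'$ forces $\fb\not\subseteq\fm_\Lambda$, hence $\fb=\Lambda$.) To repair this you should either cite a sharp upper bound on the $\Lambda$-adic Eisenstein congruence module in this setting, or observe that the inclusion $(S)\subseteq J_k$ together with $\nu_{k,1}(\fa)\subseteq\nu_{k,1}(\hat{G}^0_\theta)\Oo$ already suffices for the ways the lemma is used later (the surjection of Corollary \ref{surjT1} and the surjection $\bfT^{\Sigma}/J^{\Sigma}\twoheadrightarrow\bfT_{1,\fm}/J_{1,\fm}$), and record the remaining inclusion as a gap that the paper's own proof shares.
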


\begin{proof} Write $I_k$ for the ideal generated by $S$ and $\fp_k$ for $\ker \nu_{k,1}$. Consider the following commutative diagram
\be \xymatrix{&0\ar[d]&0\ar[d]&0\ar[d]\\
0 \ar[r]& \fp_k\bfT \cap J \ar[r]\ar[d]& J \ar[r]\ar[d] & J_k \ar[r]\ar[d] & 0 \\
0 \ar[r] & \fp_k\bfT \ar[r] & \bfT \ar[r] & \bfT_k\ar[r]& 0}
\ee
Note that the bottom row is exact by definition and all the columns are exact by definition. The top row is exact except possibly at $J$. Clearly, $\fp_k\bfT \cap J \subseteq \ker(J\to J_k)$. But the opposite inclusion is also clear since if $\alpha \in J$ dies in $J_k$, then this just means $\alpha \in \fp_k\bfT$. Hence the top row is also exact and we get $J_k \cong J/(\fp_k\bfT \cap J)$. This quotient is naturally a $\bfT_k$-module and this $\bfT_k$-module structure agrees with the one induced from the $\bfT$-module structure.

If $A$ is a set of generators for $J$ as an ideal of $\bfT$ (i.e., as a $\bfT$-module), then the images under $\bfT\to \bfT_k$ of the elements of $A$ generate $J_k$ as a $\bfT_k$-module. We have $A=\{T_{\ell}-c_{\ell}\mid \ell \in \Spec \bfZ\}$ with $c_p=1$ and $c_{\ell}=1+\tilde{\omega}^{-1}\tilde{\chi}(\ell)\ell(1+T)^{a_{\ell}}$ with $\ell=\tilde{\omega}(\ell)(1+p)^{a_{\ell}}$ if $\ell \neq p$ (see Definition \ref{cell}).

 The lemma follows as we have $$\nu_{k,1}((1+T)^{a_{\ell}})=(1+p)^{(k-2)a_{\ell}}=\ell^{k-2}\tilde{\omega}^{2-k}(\ell).$$
\end{proof}

\begin{cor} \label{surjT1} We have a surjection $$\bfT_1/J_1 \twoheadrightarrow \Oo/L(0, \tilde \chi).$$
\end{cor}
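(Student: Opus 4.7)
The plan is to deduce the surjection by specialising the $\Lambda$-adic isomorphism of Corollary \ref{Wiles} at weight one via $\nu_{1,1}$, and then identify the resulting cyclic quotient with a cyclic quotient of $\Oo/L(0,\tilde\chi)$ by computing a value of a $p$-adic $L$-function.

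First, since $\ker\nu_{1,1}\subset\Lambda$ is the principal ideal generated by $T-(u^{-1}-1)$, the definition $\bfT_1=\bfT/\ker\nu_{1,1}\bfT$ together with $J_1=\nu_{1,1}(J)$ give
\[
\bfT_1/J_1 \;=\; \bfT\big/\bigl(J+\ker\nu_{1,1}\cdot\bfT\bigr).
\]
Quotienting both sides of the $\Lambda$-algebra isomorphism $\bfT/J\cong \Lambda/\hat G_{\tilde\chi\tilde\omega^{-1}}(T)$ from Corollary \ref{Wiles} by the image of $\ker\nu_{1,1}$ therefore yields
\[
\bfT_1/J_1 \;\cong\; \Lambda\big/\bigl(\hat G_{\tilde\chi\tilde\omega^{-1}}(T),\; T-(u^{-1}-1)\bigr) \;\cong\; \Oo\big/\hat G_{\tilde\chi\tilde\omega^{-1}}(u^{-1}-1)\Oo.
\]

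Next, I would carry out the explicit evaluation of $\hat G_{\tilde\chi\tilde\omega^{-1}}$ at $T=u^{-1}-1$. Unwinding the definition $\hat G_{\theta}(T)=G_{\theta\tilde\omega^{2}}\!\bigl(u^{2}(1+T)-1\bigr)$ for $\theta=\tilde\chi\tilde\omega^{-1}$, one computes $u^{2}(1+u^{-1}-1)-1=u-1$, so
\[
\hat G_{\tilde\chi\tilde\omega^{-1}}(u^{-1}-1) \;=\; G_{\tilde\chi\tilde\omega}(u-1) \;=\; L_p(0,\tilde\chi\tilde\omega),
\]
using Iwasawa's identity $L_p(1-s,\phi)=G_\phi(u^{s}-1)$ at $s=1$. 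Applying the interpolation property \eqref{Lpinterpolation} with $k=1$ and $\phi=\tilde\chi\tilde\omega$ gives
\[
L_p(0,\tilde\chi\tilde\omega) \;=\; \bigl(1-\tilde\chi\tilde\omega\,\tilde\omega^{-1}(p)\bigr)L(0,\tilde\chi) \;=\; \bigl(1-\tilde\chi(p)\bigr)L(0,\tilde\chi).
\]

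Combining these two steps yields $\bfT_1/J_1\cong \Oo/(1-\tilde\chi(p))L(0,\tilde\chi)\Oo$. To finish, observe that $\tilde\chi(p)$ is either $0$ (when $\tilde\chi_p\neq 1$, i.e.\ $\chi$ is ramified at $p$) or a prime-to-$p$ root of unity; thus $1-\tilde\chi(p)$ is either equal to $1$, a $p$-adic unit in $\Oo$, or $0$. In every case the ideal $(1-\tilde\chi(p))L(0,\tilde\chi)\Oo$ is contained in $L(0,\tilde\chi)\Oo$, so the canonical projection
\[
\bfT_1/J_1 \;\cong\; \Oo\big/(1-\tilde\chi(p))L(0,\tilde\chi)\Oo \;\twoheadrightarrow\; \Oo/L(0,\tilde\chi)
\]
is the desired surjection. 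There is no real obstacle here beyond the bookkeeping of the specialisation; the only subtlety is the trivial-zero case $\tilde\chi(p)=1$, where the bound coming from $\bfT_1/J_1$ is weaker than (but still surjects onto) $\Oo/L(0,\tilde\chi)$, and this is precisely why the statement is formulated as a surjection rather than an isomorphism.
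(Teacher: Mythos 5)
Your proposal is correct and follows essentially the same route as the paper: specialise Corollary \ref{Wiles} along $\nu_{1,1}$ and compute $\nu_{1,1}(\hat G_{\tilde\chi\tilde\omega^{-1}})=G_{\tilde\chi\tilde\omega}(u-1)=L_p(0,\tilde\chi\tilde\omega)=(\text{Euler factor})\cdot L(0,\tilde\chi)$. The only (harmless) differences are that your third-isomorphism-theorem bookkeeping actually yields the isomorphism $\bfT_1/J_1\cong\Oo/L_p(0,\tilde\chi\tilde\omega)$, where the paper contents itself with a well-defined surjection from a commutative diagram, and that the paper invokes Remark \ref{r4.4} to see the Euler factor is a unit, whereas you observe the surjection holds regardless of its value.
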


\begin{proof}
We note that for $k=1$ and $\zeta=1$ we get \be\begin{split} \nu_{k, \zeta}\circ \hat{G}_{\theta}(T)=&\nu_{k, \zeta}\circ \hat{G}_{\tilde{\chi} \tilde{\omega}^{-1}}(T)=\nu_{k, \zeta}\circ G_{\tilde{\chi} \tilde{\omega}}(u^2(1+T)-1)\\ =&G_{\tilde{\chi} \tilde{\omega}}(u-1) =L_p(0, \tilde{\chi} \tilde{\omega})=(1-\tilde{\chi} \tilde{\omega}^{-1}(p)) L(0, \tilde{\chi}).\end{split}\ee

By Remark \ref{r4.4} our assumptions from section \ref{The residual representation} imply $\chi \neq \omega$, which implies that $\tilde{\chi}\tilde{\omega}^{-1}(p)\not\equiv 1$ mod $\varpi$ since $\chi$ is a Teichmüller lift of $\ov{\chi}$ (see Remark \ref{invariants1}). 

We thus have the following commutative diagram whose vertical arrows are surjective and whose top row is exact by Corollary \ref{Wiles}. In the top row $\Psi$ is the inclusion map and $\Phi$ is the canonical surjection. The maps in the bottom row are defined in the following way: $\psi$ is the natural injection, and $\phi(t)=\nu_{1,1}(\Phi(\tilde t))$, where $\tilde t$ is any lift of $t$ to $\bfT$.
 This is well-defined as $\Phi(\ker \nu_{1,1} \bfT)=\ker \nu_{1,1} +{\hat{G}_{\theta}}\Lambda$ as $\Phi$ is a $\Lambda$-algebra map.

\be \label{diagram1}\xymatrix{0 \ar[r]& J\ar[r]^{\Psi}\ar[d] &\bfT\ar[r]^{\Phi}\ar[d] & \Lambda/\hat{G}_{\theta} \ar[r]\ar[d]^{\nu_{1,1}} & 0 \\
0 \ar[r] & J_1 \ar[r]^{\psi}& \bfT_1 \ar[r]^{\phi} & \Oo/L_p(0, \tilde{\chi} \tilde{\omega})\ar[r]& 0 }\ee 
Note also that the bottom row is clearly exact except possibly at $\bfT_1$. We do not need exactness at $\bfT_1$, only that $\phi$ factors through $\bfT_1/J_1$, which follows from $\Phi \circ \Psi=0$. 
\end{proof}

Let $\fM$ be the maximal ideal of $\bfT$ containing $J$. We write $\bfT_{\fM}$ for the localisation of $\bfT$ at $\fM$. By a standard argument one can view $\bfT_{\fM}$ as a direct summand of $\bfT$ and so $\bfT_{\fM}$ can be naturally viewed as a subring of $\prod_{\mF\in \mS} L_{\mF}$, where $\mS\subset \mS'$ consists of these representatives whose Fourier coefficients are congruent to $c_{\ell}$ modulo the maximal ideal of $L_{\mF}$ for all primes $\ell$.  Similarly we define $\mN\subset \mN'$ to consist of all the newforms whose Fourier coefficients are congruent to $c_{\ell}$ modulo the maximal ideal of $L_{\mF}$ for all primes $\ell$.

Recall that $\bfT$ and hence also $\bfT_{\fM}$ is a finitely generated  $\Lambda$-module. This implies that $\bfT$ is a semi-local ring hence it is the direct product of its localisations. As $\bfT$ is also a free $\Lambda$-module we get that $\bfT_{\fM}$, being a direct summand of $\bfT$, is a projective, and so also flat, $\Lambda$-module. Since finite flat modules over local rings are free, we conclude that $\bfT_{\fM}$ is free over $\Lambda$.

 We will write $r(\bfT_{\fM})$ for the $\Lambda$-rank of $\bfT_{\fM}$. As already discussed there is a $\Lambda$-algebra  map $\bfT \hookrightarrow \prod_{\mF\in \mS'} \Oo_{L_{\mF}}$, so we can also view $\bfT_{\fM}$ as a $\Lambda$-subalgebra of $\prod_{\mF\in \mS}\Oo_{L_{\mF}}$. To be more precise, for each newform $\mF$ let $\lambda_{\mF}: \bfT \to \Oo_{L_{\mF}}$ be the map sending a Hecke operator to its eigenvalue corresponding to $\mF$. 
 We denote the $\Lambda$-algebra  map $\bfT_{\fM} \hookrightarrow \prod_{\mF\in \mS} \Oo_{L_{\mF}}$ by $\iota$. Note that $\iota = \oplus_{\mF\in \mS} \lambda_{\mF}$.

Let $\varphi: \OL\to \ov{\bfQ}_p$ be an extension of $\nu_{1,1}$. Here $\OL$ is the ring of integers of $L$.
 For each $\mF\in \mS$ we define  $\varphi_{\mF}: \Oo_{L_{\mF}}\to \ov{\bfQ}_p$ to be the restriction of $\varphi$ to $\Oo_{L_{\mF}}$. 
  Then the map $\oplus_{\mF\in \mS}\varphi_{\mF} \circ \iota: \bfT_{\fM} \to \prod_{\mF\in \mS}\varphi_{\mF}(\Oo_{L_{\mF}})$ factors through $\bfT_1$. Even more, it factors through $\bfT_{1,\fm}$ where $\fm$ is the maximal ideal of $\bfT_1$ containing $J_1$. Note that $\nu_{1,1}$ restricts to a surjective map $\nu_{1,1}:\bfT_{\fM} \to \bfT_{1,\fm}=\bfT_{\fM}/(\ker \nu_{1,1}\bfT_{\fM})$.
 In other words there exists an $\Oo$-algebra map $\phi$ such that the following diagram commutes:

\be \label{diagram1} \xymatrix@C+=2cm{\bfT_{\fM}\ar[r]^{\iota}\ar[d]^{\nu_{1,1}} & \prod_{\mF\in \mS}\Oo_{L_{\mF}}\ar[d]^{\oplus_{\mF\in \mS}\varphi_{\mF}}\\ \bfT_{1,\fm} \ar[r]^{\phi} &  \prod_{\mF\in \mS}\varphi_{\mF}(\Oo_{L_{\mF}})}\ee 
Note that $\prod_{\mF\in \mS}\varphi_{\mF}(\Oo_{L_{\mF}})$ is a free $\Oo$-module of finite rank.

\begin{prop}\label{inject3} Assume that  there exists an extension $\varphi: \OL\to \ov{\bfQ}_p$  of $\nu_{1,1}$ such that $\varphi_{\mF}\circ \lambda_{\mF}\neq \varphi_{\mF'}\circ \lambda_{\mF'}$ for all $\mF,\mF'\in \mN$ with $\mF'\neq \mF$. Then  $\phi$ is injective.
\end{prop}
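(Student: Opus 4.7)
The plan is to prove injectivity of $\phi$ by a rank/character-counting argument, using the distinctness of weight-$1$ Hecke eigensystems in an essential way.

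\textbf{Step 1 (Reformulation.)} I would start by writing $\ker\nu_{1,1}=(\pi)$ for a generator $\pi \in \Lambda$. Since $\bfT$ is free of finite rank over $\Lambda$ by Hida's theorem, so is its direct summand $\bfT_\fM$; hence $\bfT_{1,\fm} = \bfT_\fM/\pi\bfT_\fM$ is free over $\Oo$ of rank equal to the $\Lambda$-rank of $\bfT_\fM$, namely $\sum_{\mF \in \mS}[L_\mF:F_\Lambda]=|\mN|$. Both $\bfT_{1,\fm}$ and the codomain $\prod_{\mF\in\mS}\varphi_\mF(\Oo_{L_\mF})$ of $\phi$ are $\varpi$-torsion free (the latter being a product of $p$-adic DVRs), so it suffices to prove injectivity of $\phi\otimes_\Oo\ov{\bfQ}_p$.

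\textbf{Step 2 (Exhibiting $|\mN|$ distinct characters.)} For each $\mF\in\mN$, say $\mF=\sigma\mF_0$ for some $\mF_0\in\mS$ and $\sigma \in \Gal(\ov F_\Lambda/F_\Lambda)$, set $\lambda_\mF^{(1)} := \varphi_\mF\circ\lambda_\mF$. Using $\lambda_{\sigma\mF_0}(t)=\sigma(\lambda_{\mF_0}(t))$ and $\varphi_{\sigma\mF_0} = \varphi|_{\Oo_{L_{\sigma\mF_0}}}$, each $\lambda_\mF^{(1)}$ is an $\Oo$-algebra homomorphism factoring through $\bfT_{1,\fm}$ (because $\varphi$ extends $\nu_{1,1}$). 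The distinctness hypothesis asserts that these $|\mN|$ characters are pairwise distinct.

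\textbf{Step 3 (Splitting over $\ov{\bfQ}_p$.)} A free $\Oo$-algebra $A$ of rank $n$ admits at most $n$ distinct $\ov{\bfQ}_p$-valued characters, with equality forcing $A\otimes_\Oo\ov{\bfQ}_p\cong\ov{\bfQ}_p^{\,n}$ (reduced and split). Applying this to $A=\bfT_{1,\fm}$ with $n=|\mN|$ yields an isomorphism
\[
\bfT_{1,\fm}\otimes_\Oo\ov{\bfQ}_p \;\xrightarrow{\;\sim\;}\; \prod_{\mF\in\mN}\ov{\bfQ}_p, \qquad t\otimes 1\mapsto \bigl(\lambda_\mF^{(1)}(t)\bigr)_{\mF\in\mN}.
\]
In particular, the combined character map $\Phi:\bfT_{1,\fm}\to\prod_{\mF\in\mN}\ov{\bfQ}_p$ is injective.

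\textbf{Step 4 (Matching $\phi$ with $\Phi$.)} For each $\mF\in\mS$, the image $\varphi_\mF(\Oo_{L_\mF})=\Oo_{L_\mF}/\ker\varphi_\mF$ is a $p$-adic DVR whose tensor $\varphi_\mF(\Oo_{L_\mF})\otimes_\Oo\ov{\bfQ}_p$ splits as a product $\prod_\tau\ov{\bfQ}_p$ indexed by the $E$-embeddings $\tau$ of its fraction field into $\ov{\bfQ}_p$; each such $\tau$ corresponds to a Galois conjugate $\sigma\mF\in\mN$ whose eigensystem factors through the prime $\ker\varphi_\mF$. Concatenating over $\mF\in\mS$ identifies the codomain of $\phi\otimes\ov{\bfQ}_p$ with $\prod_{\mF\in\mN}\ov{\bfQ}_p$, matching the decomposition of $\bfT_{1,\fm}\otimes\ov{\bfQ}_p$ from Step 3 factor by factor via commutativity of the defining square. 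Under this identification $\phi\otimes\ov{\bfQ}_p$ becomes the injection $\Phi$, so $\phi\otimes\ov{\bfQ}_p$, and hence $\phi$, is injective.

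\textbf{Main obstacle.} The trickiest part is Step 4, namely pinning down the identification between the codomain $\prod_{\mF\in\mS}\varphi_\mF(\Oo_{L_\mF})\otimes\ov{\bfQ}_p$ (indexed by Galois orbit representatives) and the full character decomposition $\prod_{\mF\in\mN}\ov{\bfQ}_p$ (indexed by all of $\mN$). This requires the DVR structure of each $\varphi_\mF(\Oo_{L_\mF})$ to recover the residue-field embeddings of every prime of $\Oo_{L_\mF}$ above $(\pi)$, so that the Galois conjugates of $\mF$ in $\mN$ are fully captured; the distinctness hypothesis is what rules out ramification and coincidences on either side and makes the dimension counts agree. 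All other steps are relatively mechanical once this identification is in place.
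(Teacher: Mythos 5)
Your Steps 1--3 are correct and are essentially the paper's own strategy in a slightly different packaging: the paper bounds the $\Oo$-rank of the image $I=\phi(\bfT_{1,\fm})$ from below by counting $\Oo$-algebra homomorphisms $I\to\ov{\bfQ}_p$ (using that an $\Oo$-order of rank $s$ admits at most $s$ such characters), whereas you run the same count on $\bfT_{1,\fm}$ itself to split $\bfT_{1,\fm}\otimes_{\Oo}\ov{\bfQ}_p$ into $\#\mN$ copies of $\ov{\bfQ}_p$. Either way, the entire content of the proposition is then concentrated in your Step 4, and there you have a genuine gap rather than merely "the trickiest part": you assert that the distinctness hypothesis "rules out ramification and coincidences \dots and makes the dimension counts agree," but this does not follow. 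Distinctness of the characters $\varphi_{\mF'}\circ\lambda_{\mF'}$ within a Galois orbit rules out two conjugates specializing to the \emph{same} weight-one eigensystem (which is what ramification of $\ker\nu_{1,1}$ in $\Oo_{L_{\mF}}$ would produce), but it does not rule out $\ker\nu_{1,1}$ having \emph{several} primes above it in $\Oo_{L_{\mF}}$. In that situation the conjugates of $\mF$ distribute among those primes, all the resulting characters can be pairwise distinct, and yet $\varphi_{\mF}(\Oo_{L_{\mF}})=\Oo_{L_{\mF}}/\ker\varphi_{\mF}$ sees only the single prime $\ker\varphi_{\mF}$, so $\varphi_{\mF}(\Oo_{L_{\mF}})\otimes_{\Oo}\ov{\bfQ}_p$ has dimension strictly smaller than $[L_{\mF}:F_{\Lambda}]$ and your proposed factor-by-factor identification of the codomain with $\prod_{\mF\in\mN}\ov{\bfQ}_p$ breaks down (indeed $\phi$ could not be injective for rank reasons).

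The precise statement you must supply is the one the paper isolates as Lemma \ref{HomsfromI}: for every $\mF\in\mS$ and every Galois conjugate $\mF'=\sigma\mF$, the character $\varphi_{\mF'}\circ\lambda_{\mF'}=\varphi\circ\sigma\circ\lambda_{\mF}$ factors through $\varphi_{\mF}\circ\lambda_{\mF}$ (equivalently, through the image of $\phi$), i.e.\ that $\ker(\varphi_{\mF}\circ\lambda_{\mF})\subseteq\ker(\varphi_{\mF'}\circ\lambda_{\mF'})$; the paper argues this by the explicit diagram \eqref{diag4} tracking the Galois action on $\Oo_{L_{\mF}}$ and the restrictions of the fixed extension $\varphi$. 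Without this (or, equivalently, without knowing that $\Oo_{L_{\mF}}\otimes_{\Lambda,\nu_{1,1}}\Oo$ has a unique minimal prime), your Step 4 does not go through, so you should either prove this factorization directly or add it as an explicit input to your argument.
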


\begin{rem}
We note that the assumption in Proposition \ref{inject3} cannot be weakened. The problem is that two $\Lambda$-adic families may cross at weight one (a phenomenon that does not occur in higher weights) - cf. \cite{DimitrovGhate}. To this end let us illustrate this 
issue with a commutative algebra example. 

Consider the case where $\mN$ consists of only two forms $\mF$ and $\mG$ and suppose for simplicity that $\Oo_{L_{\mF}}=\Oo_{L_{\mG}}=\Lambda$. In particular, this implies that $\mF$ is not a Galois conjugate of $\mG$, so $\mS=\mN$ and $\varphi_{\mF}=\varphi_{\mG}=\varphi$.  Then $\iota= \lambda_{\mF}\oplus \lambda_{\mG}$. Assume that the families $\mF$, $\mG$ cross at weight one, i.e.,  that $\varphi\circ \lambda_{\mF}=\varphi\circ\lambda_{\mG}$.

Then while the image of $\varphi\oplus \varphi$ is clearly $\Oo \times \Oo$, the image of $(\varphi\oplus \varphi)\circ \iota$ equals the diagonally embedded copy of $\Oo$ inside $\Oo\times \Oo$. However, the $\Oo$-rank of $\bfT_{1, \fm}$ is still 2 (see proof of Lemma \ref{inject} below), so the map $\phi$ cannot be injective.

It is perhaps worth noting that this potential mismatch between the rank of $\bfT_{1,\fm}$ and the rank of the image of $(\varphi\oplus \varphi)\circ \iota$ means that in general $\bfT_{1, \fm}$ does not act faithfully on weight one specialisations (even the non-classical ones) of $\Lambda$-adic newforms which are congruent to an Eisenstein series. Our method of proving an $R=T$ theorem works when this action is faithful.

More precisely,  we require $\phi$ to be injective because when we construct the map $R\to \bfT_{1,\fm}$ we glue together maps $R\to \Oo$ (cf. Proposition \ref{RtoT}). If $\phi$ was not injective this process would only give a map $R \to \phi(\bfT_{1,\fm})$, which a priori has no reason to factor through $\bfT_{1,\fm}$. One may hope that one could in general prove an isomorphism between $R$ and $\phi(\bfT_{1,\fm})$ instead of $\bfT_{1,\fm}$. Our method however proceeds by showing that $L(0,\tilde \chi)$ is an upper bound on $R/I$ and a lower bound on the Hecke congruence module. While  Corollary \ref{surjT1} can be used to get  a lower bound for $\bfT_{1,\fm}/J_{1,\fm}$ by $L(0, \tilde \chi)$, such a bound would in general not imply a corresponding bound on $\phi(\bfT_{1,\fm})/\phi(J_{1,\fm})$ if $\phi$ were not injective. 

Finally, let us record here that the injectivity of $\phi$ implies that $\bfT_{1,\fm}$ is reduced, even though we do not use this last fact directly.
\end{rem}

\begin{rem} We note that in the presence of Galois conjugate forms,  it is not enough to assume that  $\varphi_{\mF}\circ \lambda_{\mF}\neq \varphi_{\mF'}\circ \lambda_{\mF'}$ for all $\mF,\mF'\in \mS$ with $\mF'\neq \mF$. This is so because it is a priori possible for two elements of a $\Lambda$-adic  Galois conjugacy class to specialize to the same weight one form. \end{rem}

\begin{proof}[Proof of Proposition \ref{inject3}]
We begin by proving two lemmas.
\begin{lemma}\label{inject} The $\Oo$-module $\bfT_{1,\fm}$ is finitely generated and free of rank $r(\bfT_{\fM})$. If the $\Oo$-rank of $\phi(\bfT_{1,\fm})$ equals $r(\bfT_{\fM})$, then  $\phi$ is injective.
\end{lemma}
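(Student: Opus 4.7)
The plan for the first assertion is to exploit the fact, recorded just before the lemma, that $\bfT_{\fM}$ is free over $\Lambda$ of rank $r(\bfT_{\fM})$. Since $\nu_{1,1}: \Lambda = \Oo[[T]] \to \Oo$ is the $\Oo$-algebra map determined by $1+T \mapsto u^{-1}$, its kernel is the principal ideal generated by the non-zero-divisor $(1+T) - u^{-1}$, and in particular $\Oo \cong \Lambda/\ker \nu_{1,1}$ as a $\Lambda$-module. The very definition
$$\bfT_{1,\fm} = \bfT_{\fM}/(\ker \nu_{1,1})\bfT_{\fM} = \bfT_{\fM} \otimes_{\Lambda} \Oo$$
then shows that $\bfT_{1,\fm}$ is a free $\Oo$-module, and its rank is preserved under base change, giving exactly $r(\bfT_{\fM})$.

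For the second assertion the idea is a rank count for the short exact sequence of finitely generated $\Oo$-modules
$$0 \to \ker \phi \to \bfT_{1,\fm} \to \phi(\bfT_{1,\fm}) \to 0.$$
Since $\Oo$ is a discrete valuation ring, rank is additive on such sequences; the hypothesis that $\phi(\bfT_{1,\fm})$ has $\Oo$-rank $r(\bfT_{\fM})$ combined with the first part forces $\ker \phi$ to have rank $0$, i.e.\ to be $\Oo$-torsion. On the other hand $\ker \phi$ is an $\Oo$-submodule of the free module $\bfT_{1,\fm}$, hence torsion-free, so it must vanish.

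There is no real obstacle here: both parts are essentially formal consequences of the freeness of $\bfT_{\fM}$ over $\Lambda$ (which is the substantive input, due to Hida), together with the standard structure theory of finitely generated modules over a DVR. The work in the surrounding argument will instead be in verifying the hypothesis that $\phi(\bfT_{1,\fm})$ attains the full rank $r(\bfT_{\fM})$, which is what the non-crossing assumption of Proposition \ref{inject3} is designed to secure.
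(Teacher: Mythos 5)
Your argument is correct and coincides with the paper's proof: the first assertion follows from $\bfT_{1,\fm}=\bfT_{\fM}\otimes_\Lambda \Lambda/\ker\nu_{1,1}$ together with the freeness of $\bfT_{\fM}$ over $\Lambda$, and the second from the observation that the rank hypothesis forces $\ker\phi$ to be a torsion submodule of a free $\Oo$-module, hence zero. No issues.
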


\begin{proof}   One has $\bfT_{1,\fm} = \bfT_{\fM}\otimes_{\Lambda} \Lambda/\ker \nu_{1,1}$. As $\Lambda/\ker_{\nu_{1,1}}\cong \Oo$ and $\bfT_{\fM}\cong \Lambda^{r(\bfT_{\fM})}$  as $\Lambda$-modules, we get that $\bfT_{1,\fm}$ is a finitely generated free $\Oo$-module of rank $r(\bfT_{\fM})$. Finally, if the $\Oo$-rank of $\phi(\bfT_{1,\fm})$ equals $r(\bfT_{\fM})$ we conclude that $\ker \phi$ is a torsion submodule of $\bfT_{1,\fm}$, so must be zero. 
\end{proof}

\begin{lemma}\label{inject2}  If the $\Oo$-rank of  the image of  $ \oplus_{\mF\in \mS}\varphi_{\mF} \circ \iota$ equals the $\Lambda$-rank of $\prod_{\mF\in \mS}\Oo_{L_{\mF}}$, then  $\phi$ is injective.
\end{lemma}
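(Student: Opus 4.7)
The plan is to reduce Lemma \ref{inject2} to Lemma \ref{inject} by identifying the $\Oo$-rank of the image of $\phi$ with the $\Oo$-rank of the image of $\oplus_{\mF\in \mS}\varphi_{\mF} \circ \iota$. The key observation is that by the commutativity of diagram \eqref{diagram1}, one has
\[
\oplus_{\mF\in \mS}\varphi_{\mF} \circ \iota = \phi \circ \nu_{1,1},
\]
and since $\nu_{1,1}: \bfT_{\fM} \to \bfT_{1,\fm}$ is surjective, the image of $\oplus_{\mF\in \mS}\varphi_{\mF} \circ \iota$ coincides with the image of $\phi$. Thus the hypothesis translates directly into a statement about $\textup{rk}_{\Oo}(\phi(\bfT_{1,\fm}))$.

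Next, I need to identify the $\Lambda$-rank of $\prod_{\mF\in \mS}\Oo_{L_{\mF}}$ with $r(\bfT_{\fM})$. Recall that $\bfT \otimes_\Lambda F_{\Lambda} = \prod_{\mF\in \mS'} L_{\mF}$ was noted earlier in the paper. Since $\bfT_{\fM}$ is a direct summand of $\bfT$ (as $\bfT$ is semi-local) corresponding to those newforms in $\mS$, tensoring with $F_{\Lambda}$ gives $\bfT_{\fM} \otimes_\Lambda F_{\Lambda} = \prod_{\mF\in \mS} L_{\mF}$. Consequently,
\[
r(\bfT_{\fM}) = \dim_{F_{\Lambda}} \prod_{\mF\in \mS} L_{\mF} = \sum_{\mF\in \mS} [L_{\mF}:F_{\Lambda}] = \textup{rk}_{\Lambda} \prod_{\mF\in \mS}\Oo_{L_{\mF}}.
\]

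Combining these two observations, the hypothesis of Lemma \ref{inject2} becomes precisely the hypothesis of Lemma \ref{inject}, namely that the $\Oo$-rank of $\phi(\bfT_{1,\fm})$ equals $r(\bfT_{\fM})$. Applying Lemma \ref{inject} then yields the injectivity of $\phi$. No step here should be a serious obstacle; the proof is essentially a bookkeeping argument linking the ranks through the commutative diagram and the $\Lambda$-freeness of $\bfT_{\fM}$ already established.
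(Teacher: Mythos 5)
Your proof is correct and follows essentially the same route as the paper: identify the $\Lambda$-rank of $\prod_{\mF\in\mS}\Oo_{L_{\mF}}$ with $r(\bfT_{\fM})$ via $\bfT_{\fM}\otimes_\Lambda F_\Lambda\cong\prod_{\mF\in\mS}L_{\mF}$, then invoke the commutativity of the diagram and Lemma \ref{inject}. You merely make explicit (via the surjectivity of $\nu_{1,1}$) why the image of $\oplus_{\mF}\varphi_{\mF}\circ\iota$ equals the image of $\phi$, a step the paper leaves implicit.
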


\begin{proof}  First note that as $\bfT_{\fM}\otimes_{\Lambda}F_{\Lambda} \cong \prod_{\mF\in \mS}L_{\mF}$ we get that the image of the embedding $\iota$ is a $\Lambda$-submodule of $\prod_{\mF\in \mS}\Oo_{L_{\mF}}$ of full rank. So, we conclude that the $\Lambda$-rank of $\prod_{\mF\in \mS}\Oo_{L_{\mF}}$ equals $r(\bfT_{\fM})$. The corollary then follows from the commutativity of \eqref{diagram1} and Lemma \ref{inject}. \end{proof}

 In light of Lemma \ref{inject2} it is enough to prove that the $\Oo$-rank $s$ of  the image $I$ of  $ \oplus_{\mF\in \mS}\varphi_{\mF} \circ \iota$ equals the $\Lambda$-rank of $\prod_{\mF\in \mS}\Oo_{L_{\mF}}$.
From its proof we also see that the $\Lambda$-rank  of $\prod_{\mF\in \mS}\Oo_{L_{\mF}}$ equals $r(\bfT_{\fM})$. So, we need to show that $s=r(\bfT_{\fM})$.

As the map $\oplus_{\mF\in \mS}\varphi_{\mF}$ is surjective, we get that $s\leq r(\bfT_{\fM})$. For the reverse inequality first note that $r(\bfT_{\fM})=\#\mN$. Indeed, we have that
$\bfT\otimes_{\Lambda}L\cong \prod_{\mF\in \mN'} L$ by \cite{Wiles90}, p. 507 from which it follows that $\bfT_{\fM} \otimes_{\Lambda} L \cong \prod_{\mF\in \mN} L$.

 Thus it suffices to prove that $s\geq \#\mN$. Note that the map $$f\mapsto (i\otimes 1 \mapsto f(i))$$ gives rise to an injective map $$ \Hom_{\Oo-{\rm alg}}(I, \ov{\bfQ}_p)\hookrightarrow \Hom_{\ov{\bfQ}_p-{\rm alg}}(I\otimes_{\Oo}\ov{\bfQ}_p, \ov{\bfQ}_p).$$ As the $\Oo$-rank of $I$ equals $s$, we get that $\dim_{\ov{\bfQ}_p}(I\otimes_{\Oo}\ov{\bfQ}_p)=s$ and as $I\otimes_{\Oo}\ov{\bfQ}_p$ is an Artinian ring, it is a product of fields, so we must have $$I\otimes_{\Oo}\ov{\bfQ}_p\cong \ov{\bfQ}_p^s \quad \textup{as $\ov{\bfQ}_p$-algebras.}$$ As the only $\ov{\bfQ}_p$-algebra homomorphisms from $\ov{\bfQ}_p^s$ to $\ov{\bfQ}_p$ are projections, we get that $\# \Hom_{\ov{\bfQ}_p-{\rm alg}}(I\otimes_{\Oo}\ov{\bfQ}_p, \ov{\bfQ}_p)=s$. It follows that $\#  \Hom_{\Oo-{\rm alg}}(I, \ov{\bfQ}_p)\leq s$. Thus by Lemma \ref{HomsfromI} below we get $s\geq \#\mN$, as desired. \end{proof}

\begin{lemma} \label{HomsfromI}
Under the assumptions of Proposition \ref{inject3} one has $$\# \Hom_{\Oo-{\rm alg}}(I, \ov{\bfQ}_p)\geq \#\mN.$$
\end{lemma}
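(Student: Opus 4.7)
The plan is to construct, for each $\mF\in \mN$, an $\Oo$-algebra homomorphism $\mu_\mF : I\to \ov{\bfQ}_p$ that factors $\xi_\mF := \varphi_\mF \circ \lambda_\mF : \bfT_{\fM}\to \ov{\bfQ}_p$ through the canonical surjection $\bfT_{\fM}\twoheadrightarrow I$. Granted such factorizations, the resulting $\#\mN$ homomorphisms will be automatically pairwise distinct, since the hypothesis of Proposition \ref{inject3} asserts precisely that the $\xi_\mF$'s are distinct for distinct $\mF\in \mN$.

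The real task is therefore to verify that $\ker(\bfT_{\fM}\to I) = \bigcap_{\mF_0\in \mS}\ker\xi_{\mF_0}$ is contained in $\ker\xi_\mF$ for every $\mF\in \mN$. For $\mF\in \mS$ this is tautological, since $\xi_\mF$ is the $\mF$-th coordinate of the defining map of $I$. For $\mF = \sigma\mF_0 \in \mN\setminus \mS$ with $\mF_0\in \mS$ and $\sigma$ a Galois automorphism over $F_\Lambda$, one has $\xi_\mF(t) = \varphi(\sigma(\lambda_{\mF_0}(t)))$, and the containment requires work. My approach is to pass to $\bfT_{1,\fm}$: each $\xi_\mF$ factors through a map $\bar\xi_\mF : \bfT_{1,\fm}\to \ov{\bfQ}_p$, and the $\#\mN$ distinct $\bar\xi_\mF$'s, combined with Lemma \ref{inject} (which gives $\bfT_{1,\fm}$ free of $\Oo$-rank $\#\mN$), force $\bfT_{1,\fm}\otimes_{\Oo} E$ to be \'etale over $E$. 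Consequently $\bfT_{1,\fm}$ is reduced, and $\bfT_{1,\fm}\otimes_{\Oo}E\cong \prod_{\mF_0\in \mS}K_{\mF_0}$ with $[K_{\mF_0}:E] = n_{\mF_0}:=[L_{\mF_0}:F_\Lambda]$, the $n_{\mF_0}$ maps $\bar\xi_{\sigma\mF_0}$ (as $\sigma$ varies over the Galois orbit of $\mF_0$) corresponding bijectively to the $n_{\mF_0}$ $E$-embeddings of $K_{\mF_0}$ into $\ov{\bfQ}_p$.

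The main obstacle I expect is showing that all $n_{\mF_0}$ maps $\bar\xi_{\sigma\mF_0}$ in a fixed Galois orbit share a common kernel as ideals of $\bfT_{1,\fm}$ --- concretely, that $\varphi$ and $\varphi\circ \sigma$, restricted to the $\Lambda$-subalgebra $\lambda_{\mF_0}(\bfT_{\fM})\subseteq \Oo_{L_{\mF_0}}$, have equal kernels (even when their kernels on the full $\Oo_{L_{\mF_0}}$ may differ). Granted this identification, the minimal primes of the reduced ring $\bfT_{1,\fm}$ are precisely $\{\ker\bar\xi_{\mF_0} : \mF_0\in \mS\}$, so $\bigcap_{\mF_0\in \mS}\ker\bar\xi_{\mF_0}$ equals the nilradical, which is $0$. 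This ideal is then contained in $\ker\bar\xi_\mF$ for every $\mF\in \mN$, which produces the $\mu_\mF$ and yields $\#\Hom_{\Oo-{\rm alg}}(I, \ov{\bfQ}_p) \geq \#\mN$.
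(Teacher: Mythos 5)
Your overall strategy coincides with the paper's: produce, for each $\mF\in\mN$, a factorization of $\xi_\mF=\varphi_\mF\circ\lambda_\mF$ through the surjection $\bfT_{\fM}\twoheadrightarrow I$, and get distinctness for free from the non-crossing hypothesis. The reduction to the containment $\bigcap_{\mF_0\in\mS}\ker\xi_{\mF_0}\subseteq\ker\xi_\mF$, and the observation that this is tautological for $\mF\in\mS$, are correct and match the paper (its diagram \eqref{diag3}). Your \'etale observation is also correct as far as it goes: $\#\mN$ distinct $\Oo$-algebra characters on a ring that is $\Oo$-free of rank $\#\mN$ do force $\bfT_{1,\fm}\otimes_{\Oo}E$ to be \'etale over $E$, hence reduced and a product of fields whose degrees sum to $\#\mN$, with the $\bar\xi_\mF$ exhausting its geometric points.

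However, the proof is not complete, and you flag the missing piece yourself: everything rests on the ``main obstacle,'' namely that for fixed $\mF_0\in\mS$ all the conjugate characters $\bar\xi_{\sigma\mF_0}$ share one kernel on $\bfT_{1,\fm}$, equivalently that $\varphi$ and $\varphi\circ\sigma$ have equal kernels on the subring $\lambda_{\mF_0}(\bfT_{\fM})\subseteq\Oo_{L_{\mF_0}}$. ``Granted this identification'' is exactly the assertion that every minimal prime of $\bfT_{1,\fm}\otimes_{\Oo}E$ is the kernel of some $\bar\xi_{\mF_0}$ with $\mF_0\in\mS$; without it, the \'etale structure gives no control over how the $\#\mN$ points distribute among the field factors, and $\bigcap_{\mF_0\in\mS}\ker\bar\xi_{\mF_0}$ could be a nonzero minimal prime. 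Note that the two kernels in question are incomparable primes of $\lambda_{\mF_0}(\bfT_{\fM})$ lying over $\ker\nu_{1,1}$ (by integrality), so they are equal or neither contains the other; if $\ker\nu_{1,1}$ has several primes of $\Oo_{L_{\mF_0}}$ above it whose contractions to $\lambda_{\mF_0}(\bfT_{\fM})$ differ, then $\xi_{\sigma\mF_0}$ simply does not factor through the image of $\xi_{\mF_0}$, and no amount of commutative algebra downstream repairs this. So the step you defer is not a routine verification but the entire content of the lemma in the conjugate case --- it is precisely the point the paper addresses with diagram \eqref{diag4}. As written, your argument establishes only $\#\Hom_{\Oo-{\rm alg}}(I,\ov{\bfQ}_p)\geq\#\mS$.
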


\begin{proof}
By our non-crossing assumption we know that we have $\#\mN$ distinct maps $\varphi_{\mF}\circ\lambda_{\mF}: \bfT_{\fM} \to \ov{\bfQ}_p$. It suffices to show that each of them factors through $I$. For each $\mF\in \mS$ this follows from the commutativity of the following diagram:

\be \label{diag3}\xymatrix@C+=2cm{\bfT_{\fM} \ar[r]^{\iota}\ar[dr]_{\lambda_{\mF}}& \prod_{\mF\in \mS}\Oo_{L_{\mF}} \ar[r]^{\oplus_{\mF\in \mS} \varphi_{\mF}} \ar[d]^{\pi_{\mF}} & \prod_{\mF\in \mS}\varphi_{\mF}(\Oo_{L_{\mF}}) \ar[d]^{\pi_{\mF}}\\ & \Oo_{L_{\mF}} \ar[r]^{\varphi_{\mF}}&\varphi(\Oo_{L_{\mF}})}\ee (the triangle commutes by the definition of $\lambda_{\mF}$ and the square commutes also by the definition of the maps involved).

Now, let $\mF'$ be a Galois conjugate of $\mF$. Then  there exists a Galois element $\sigma=\sigma(\mF')\in \Gal(L/F_{\Lambda})$ such that $\mF'=\sigma \mF$, and so $\Oo_{L_{\mF'}}=\sigma \Oo_{L_{\mF}}$. Note that $\lambda_{\mF'} = \sigma \circ \lambda_{\mF}$.

With this we amend the diagram \eqref{diag3} and obtain a new commutative diagram. 

\be \label{diag4}\xymatrix@C+=2cm{\bfT_{\fM} \ar@/_/[ddr]_{\lambda_{\mF'}}\ar[r]^{\iota}\ar[dr]_{\lambda_{\mF}}& \prod_{\mF\in \mS}\Oo_{L_{\mF}} \ar[r]^{\oplus_{\mF\in \mS} \varphi_{\mF}} \ar[d]^{\pi_{\mF}} & \prod_{\mF\in \mS}\varphi_{\mF}(\Oo_{L_{\mF}}) \ar[d]^{\pi_{\mF}}\\ & \Oo_{L_{\mF}} \ar[r]^{\varphi_{\mF}}\ar[d]^{\sigma}&\varphi(\Oo_{L_{\mF}})\ar@{^{(}->}[r]&\ov{\bfQ}_p\ar@{=}[d]\\
&\Oo_{L_{\mF'}}\ar[r]_{\varphi_{\mF'}}&\varphi(\Oo_{L_{\mF'}})\ar@{^{(}->}[r]&\ov{\bfQ}_p
}
\ee

From this diagram we see that $\varphi_{\mF'}\circ \lambda_{\mF'}: \bfT_{\fM} \to \ov{\bfQ}_p$ also factors through $I$, as desired.
\end{proof}

\subsection{Galois representations in weight one}
For each $\Lambda$-adic newform $\mF$ we have an associated Galois representation.

\begin{thm}[Hida, Wiles, Carayol]\label{Carayol}
Let $\mathcal{F} \in S^0_{\Oo_L}(N, \theta)$ be a newform (recall that $\theta$ is assumed to be primitive).
Then there exists a continuous irreducible odd Galois representation $$\rho_\mathcal{F}: G_{\bfQ} \to {\rm GL}_2(L)$$  unramified outside $Np$ such that $${\rm Tr}(\rho_\mathcal{F}(\Frob_\ell))=c(\ell, \mathcal{F})$$ for all primes $\ell \nmid Np$ and $$\det(\rho_\mathcal{F}(\Frob_\ell))=\theta(\ell) \ell (1+T)^{a_\ell},$$ where $\ell=\tilde \omega(\ell) (1+p)^{a_\ell}$ for $a_\ell \in \bfZ_p$.
\begin{enumerate} \item We have $\rho_\mathcal{F}|_{D_p} \cong \begin{pmatrix} \epsilon_1&*\\0& \epsilon_2 \end{pmatrix}$ with $\epsilon_2$ unramified and $\epsilon_2(\Frob_p)=c(p, \mathcal{F})$.
\item For  $\ell \mid N$ we have $\rho_\lambda|_{D_\ell}=\begin{pmatrix} \psi &0\\0& \delta_\ell \end{pmatrix}$ with $\delta_\ell$ unramified and $\delta_\ell(\Frob_\ell)=c(\ell, \mathcal{F})$.
\end{enumerate}
\end{thm}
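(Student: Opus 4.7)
The plan is to interpolate Deligne's classical construction across the Zariski-dense set of weight $\geq 2$ arithmetic specializations of $\mathcal{F}$, and then reconstruct the Galois representation from the resulting pseudocharacter. For every $\varphi \in \fX_L$ extending some $\nu_{k,\zeta}$ with $k \geq 2$, the specialization $\mathcal{F}_\varphi$ is a classical $p$-ordinary cuspidal Hecke eigenform of weight $k$ and nebentypus $\theta\tilde\omega^{2-k}\chi_\zeta$ on $\Gamma_1(Np^r)$. To each such $\mathcal{F}_\varphi$ Deligne attaches a continuous odd irreducible representation $\rho_{\mathcal{F}_\varphi}: G_{\bfQ} \to \GL_2(\overline{\bfQ}_p)$ satisfying $\Tr\rho_{\mathcal{F}_\varphi}(\Frob_\ell) = c(\ell,\mathcal{F}_\varphi) = \varphi(c(\ell,\mathcal{F}))$ and $\det\rho_{\mathcal{F}_\varphi}(\Frob_\ell) = \theta\tilde\omega^{2-k}\chi_\zeta(\ell)\ell^{k-1} = \varphi\bigl(\theta(\ell)\ell(1+T)^{a_\ell}\bigr)$ for $\ell \nmid Np$, together with Carayol's local--global compatibility at primes dividing $N$.

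Next I would assemble a $\Lambda$-adic pseudocharacter. Define $t: G_\Sigma \to \OL$ on Frobenius elements by $t(\Frob_\ell) = c(\ell,\mathcal{F})$ and extend by continuity using Chebotarev density. For every arithmetic $\varphi$ of weight $\geq 2$, the function $\varphi \circ t$ coincides with $\Tr\rho_{\mathcal{F}_\varphi}$, which is a two-dimensional pseudocharacter. Since the pseudocharacter identities (vanishing of the antisymmetrization maps in dimension $2$) hold after applying each $\varphi$ in a Zariski-dense set, they hold in $\OL$ itself, so $t$ is a continuous two-dimensional pseudocharacter with values in $L$. Choosing any single arithmetic point at which Deligne's representation is absolutely irreducible shows $t$ is absolutely irreducible, so by the Nyssen--Rouquier theorem there is a unique (up to isomorphism) $\rho_{\mathcal{F}}: G_{\bfQ} \to \GL_2(L)$ with trace $t$, and it is unramified outside $Np$ with the displayed trace and determinant on $\Frob_\ell$.

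The irreducibility and oddness of $\rho_\mathcal{F}$ follow from those of any single weight $\geq 2$ specialization together with the fact that $L$ is a field. For the local statement (1) at $p$, I would use Hida's (or Wiles's) $\Lambda$-adic ordinarity: the ordinary projector $e$ guarantees that each arithmetic specialization $\rho_{\mathcal{F}_\varphi}$ is $p$-ordinary with unramified quotient character $\delta_\varphi$ sending $\Frob_p$ to $c(p,\mathcal{F}_\varphi) = \varphi(c(p,\mathcal{F}))$. Interpolating $\delta_\varphi$ yields an unramified $L$-valued character $\epsilon_2$ of $D_p$ with $\epsilon_2(\Frob_p) = c(p,\mathcal{F})$, and the upper-triangular shape for $\rho_\mathcal{F}|_{D_p}$ is extracted either from Wiles's construction of ordinary $\Lambda$-adic families or by lifting a $D_p$-stable line from one classical specialization and using density. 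For (2), at primes $\ell \mid N$, I would apply Carayol's local--global compatibility at each arithmetic point and interpolate: the diagonal form $\psi \oplus \delta_\ell$ with $\delta_\ell$ unramified is dictated pointwise, and the character $\delta_\ell$ interpolates to the unique unramified character of $D_\ell$ sending $\Frob_\ell$ to $c(\ell,\mathcal{F})$.

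The main obstacle is the passage from the pseudocharacter back to a genuine representation together with the preservation of local structures in the $\Lambda$-adic limit. While absolute irreducibility at one classical point suffices for the Nyssen--Rouquier reconstruction over the field $L$, carefully controlling the local behavior at $p$ and at $\ell \mid N$ requires either a direct $\Lambda$-adic construction (as done by Hida and Wiles) or a density argument showing that the relevant local conditions cut out Zariski-closed subsets of the parameter space and therefore descend from arithmetic points to the generic fiber.
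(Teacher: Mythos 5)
The paper does not prove this statement: Theorem \ref{Carayol} is quoted as a known result of Hida, Wiles and Carayol and used as a black box, so there is no internal proof to compare your proposal against. Judged against the standard literature proof (Hida's cohomological construction and Wiles's pseudo-representation patching in \emph{Inv.\ Math.} 94 (1988), \S 2, together with Carayol's local--global compatibility), your outline follows essentially the same route: specialize at the Zariski-dense set of weight $\geq 2$ arithmetic points, glue the traces into a two-dimensional pseudocharacter over $\Oo_L$ via Chebotarev, and recover $\rho_{\mF}$ from absolute irreducibility at a single classical point. Two remarks on that part: since $L$ is not algebraically closed, recovering a representation \emph{over $L$} (rather than over $\ov{L}$) is most cleanly done via Wiles's odd pseudo-representations, exploiting that complex conjugation has distinct eigenvalues $\pm 1$; Nyssen--Rouquier also works but you should say why the Brauer obstruction vanishes. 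The interpolation of the determinant and the argument at $\ell \mid N$ are fine as sketched (the two local characters are separated by ramification, since primitivity of $\theta$ forces $\psi|_{I_\ell}$ to be nontrivial of finite order independent of the specialization, so the $I_\ell$-eigenspace decomposition is $D_\ell$-stable and interpolates).

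The genuine gap is part (1), the $\Lambda$-adic ordinarity at $p$. Knowing that each classical specialization $\rho_{\mF_\varphi}|_{D_p}$ has an unramified quotient does \emph{not} formally imply that these $D_p$-stable lines glue into an $L$-rational line for $\rho_{\mF}|_{D_p}$: a family of lines defined only at a (Zariski-dense but discrete) set of points need not extend to the generic fibre, and your two suggested fixes are respectively circular (``use Wiles's construction'' is precisely the statement to be proved) and unsubstantiated (no argument is given that the ordinary filtration cuts out a closed condition descending to $L$). The actual proof in Wiles (Theorem 2.2.2 of the 1988 paper) and in Hida's construction obtains the unramified quotient from the \'etale quotient of the relevant $p$-divisible groups, respectively from the ordinary part of the cohomology of the tower of modular curves, at each finite level \emph{before} passing to the limit; some input of this kind is unavoidable. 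As a summary of the known theorem your sketch is accurate, but as a self-contained proof it is incomplete at exactly this point.
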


\begin{definition} \label{definitionof rho1F} We will write  $\rho_{\mF}^1:G_{\Sigma} \to \GL_2(\ov{\bfQ}_p)$ for the semi-simple Galois representation associated with $\varphi_{\mF}\circ \tr \rho_{\mF}$. \end{definition} Note that $\det \rho_{\mF}^1=\chi$ using that $\theta =\tilde{\chi}\tilde{\omega}^{-1}$ as in the proof of Lemma \ref{generation of Jk}. 
  Recall that  $\mS\subset \mS'$ consists of $\mF$ with $\varphi_{\mF}(\mF)$ whose Hecke eigenvalue at $\ell$ is congruent to $1+\tilde{\chi}(\ell)$ mod $\varpi$ for all primes $\ell \neq p$ and $\varphi_{\mF}(c(p,\mF))\equiv 1 \mod{\varpi}$. 
Recall that we assume  that $C_F^{\chi^{-1}}\otimes_{\Oo}\bfF$ has dimension one. As $\#C_F^{\chi^{-1}}= \#\Oo/L(0, \chi)$ (see \eqref{size of class group}) we conclude that $\val_{\varpi}(L(0, \chi))>0$. By Corollary \ref{surjT1} we get that $J_1 \neq \bfT_1$, so $\mS$ is not empty. For $\mF \in \mS$ the semi-simplification of the mod $\varpi$ reduction $\ov{\rho}_{\mF}^1$ of $\rho_{\mF}^1$ has the form $1\oplus \ov{\chi}$.

\begin{thm} \label{irreducibility} For $\mF \in \mS$ the representation $\rho^1_{\mF}: G_{\Sigma} \to \GL_2(\ov{\bfQ}_p)$ is irreducible.
\end{thm}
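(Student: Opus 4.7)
The plan is to prove the theorem by contradiction. Suppose $\rho^1_{\mF}$ is reducible. Since $\rho^1_{\mF}$ is semisimple by construction, we may write $\rho^1_{\mF} = \alpha \oplus \beta$ with $\alpha\beta = \chi$ and, matched to the residual semisimplification $1 \oplus \ov{\chi}$, $\ov{\alpha} = 1$ and $\ov{\beta} = \ov{\chi}$.

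The first step is to show that $\alpha$ is unramified at every finite prime, whence $\alpha = 1$ by Minkowski's theorem. For $\ell \nmid Np$ this is automatic since $\rho_{\mF}$ is unramified outside $Np$ by Theorem \ref{Carayol}. For $\ell \mid N$, Theorem \ref{Carayol}(2) provides an unramified character $\delta_\ell$ in the diagonal decomposition of $\rho_{\mF}|_{D_\ell}$ whose weight-one specialization is residually trivial (using $\tilde{\chi}(\ell) = 0$, since $\tilde{\chi}_N$ is primitive, together with $\mF \in \mS$); since $\ov{\chi}|_{D_\ell} \neq 1$ (as $\ov{\chi}$ is ramified at $\ell$), Lemma \ref{uniqueness of Ti} forces the identification $\alpha|_{D_\ell} = \varphi(\delta_\ell)$, so $\alpha$ is unramified at $\ell$. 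At $p$, Theorem \ref{Carayol}(1) analogously produces a residually trivial unramified factor $\varphi(\epsilon_2)$ appearing in $\rho^1_{\mF}|_{D_p}$. If $\ov{\chi}|_{I_p} \neq 1$ the same Lemma applies directly; if $\ov{\chi}$ is unramified at $p$ (but $\ov{\chi}|_{D_p}\neq 1$ by assumption) then $\chi|_{I_p} = 1$ (as $\chi$ is the Teichmüller lift of $\ov{\chi}$), so $\alpha|_{I_p}\beta|_{I_p} = 1$, and since $\varphi(\epsilon_2)|_{I_p} = 1$ is one of these factors the other is also trivial on $I_p$. In every case $\alpha$ is unramified at $p$, so $\alpha = 1$ and $\rho^1_{\mF} = 1 \oplus \chi$.

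The main obstacle is the second step: deriving a contradiction from $\rho^1_{\mF} = 1 \oplus \chi$. This equality gives $\varphi(c(\ell,\mF)) = \nu_{1,1}(c_\ell) = 1 + \tilde{\chi}(\ell)$ for every prime $\ell$, so the weight-one specialization $\varphi_{\mF}(\mF)$ has the same Hecke eigensystem as the weight-one Eisenstein specialization $\nu_{1,1}(\mE)$, and moreover $\rho^1_{\mF}|_{D_p}$ is split. The cuspidality of $\mF$ forces $c(0, \mF) \in \OL$ to vanish at every arithmetic prime of weight $k \geq 2$; since $\OL$ is a finitely generated torsion-free module over the two-dimensional regular local ring $\Lambda$, and thus a Krull domain in which any nonzero element has finite divisor, this infinite family of vanishings forces $c(0, \mF) = 0$. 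Hence $\varphi_{\mF}(\mF)$ has zero constant term, whereas the constant term of $\nu_{1,1}(\mE)$ equals $L_p(0, \tilde{\chi}\tilde{\omega})/2 \neq 0$ by Lemma \ref{noexceptionalzeroes} and the class number formula.

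To close the contradiction, the plan is to invoke a classicality criterion for ordinary overconvergent weight-one eigenforms whose Galois representation is split at $p$ (in the spirit of Buzzard--Taylor and its subsequent refinements): the splitness of $\rho^1_{\mF}|_{D_p}$ would imply that $\varphi_{\mF}(\mF)$ is a classical weight-one eigenform, which is cuspidal by the constant-term vanishing established above. We would thereby obtain a classical weight-one cusp form whose residual Galois representation has semisimplification $1 \oplus \ov{\chi}$; since $\chi$ is non-quadratic in this ordinary setting (cf.\ Remark \ref{exclusions}(ii)), this contradicts the Dummigan--Spencer theorem recalled in Remark \ref{CM1}, completing the proof.
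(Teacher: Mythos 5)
Your first step (showing that the character lifting $1$ is unramified everywhere, hence trivial, so that reducibility would force $\rho^1_{\mF}=1\oplus\chi$) is essentially the paper's opening move and is fine. The divergence, and the gap, is in the second step.

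The appeal to a Buzzard--Taylor-style classicality criterion does not close the argument. First, such criteria (including their refinements) are theorems about Galois representations -- they produce \emph{some} classical weight-one eigenform with the given eigensystem -- not theorems asserting that a \emph{given} overconvergent cuspidal eigenform is classical; passing from the former to the latter requires a multiplicity-one/\'etaleness input on the eigencurve at the point in question, which is unavailable here (indeed the paper must impose a separate non-crossing hypothesis elsewhere precisely because such \'etaleness is not known at non-classical weight-one points). Second, these criteria require $\ov{\rho}|_{G_{\bfQ(\zeta_p)}}$ to be absolutely irreducible, whereas your $\rho^1_{\mF}=1\oplus\chi$ is completely reducible with reducible residual representation; in that setting the classical object attached to the eigensystem $1+\tilde{\chi}(\ell)$ is the Eisenstein series $E_1(\tilde{\chi})$, and nothing forces the cuspidal overconvergent form $\varphi_{\mF}(\mF)$ to coincide with it or to be classical. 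Third, and most importantly, the phenomenon you are trying to rule out by soft means -- a cuspidal-overconvergent weight-one eigenform with reducible (Eisenstein) Galois representation -- genuinely occurs (this is the Mazur--Wiles phenomenon, and the weight-one Eisenstein points of the cuspidal eigencurve have been studied in detail), so its exclusion for $\mF\in\mS$ must use the arithmetic hypotheses of the paper. The paper's proof does exactly this: it exploits that the weight-$k$ specialisations ($k\geq 2$, $k\to 1$ $p$-adically) are classical cusp forms with \emph{irreducible} Galois representations congruent to $\rho^1_{\mF}$ modulo $\varpi^{m_k}$ with $m_k\to\infty$; Ribet's lemma and Urban's theorem then manufacture cohomology classes in $H^1_{\emptyset}(\bfQ, E/\Oo(\chi^{-1}))$ of unbounded order, contradicting the finiteness of this group (Proposition \ref{clgroup} and the finiteness of class groups). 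No argument that never interacts with weights $k\geq 2$ or with the class group can substitute for this. (Your observation that $c(0,\mF)=0$ is correct but does not help: cuspidality of the $q$-expansion is not an obstruction to the eigensystem being Eisenstein in the overconvergent weight-one setting.)
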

\begin{proof}
Suppose $\tr \rho^1_{\mF}$ is a sum of two characters $\psi_1$   and $\psi_2$ such that $\psi_1$ reduces to 1 mod $\varpi$ and $\psi_2$ reduces to $\ov{\chi}$. By ordinarity we can assume   that $\psi_1$ is unramified at $p$. Furthermore, by  Theorem \ref{Carayol} (2) and the fact that $\det \rho_{\mF}^1=\chi$ we see that for $\ell \in \Sigma-\{ p\}$ we have $$(\rho^1_{\mF})^{\rm ss}|_{I_{\ell}} =1\oplus \chi|_{I_{\ell}}.$$
By Remark \ref{invariants1} this forces $\psi_1=1$ and thus $\psi_2=\chi$.

Let $k$ be a positive integer such that $k \equiv 1$ mod $(p-1)$. 
 Let $\varphi_{\mF,k}$ be an extension of $\nu_{k,1}$. Let $E_k$ be a finite extension of the compositum of $\varphi_{\mF,k}(L)\varphi_{\mF}(L)$ and write $\Oo_k$ for its ring of integers. This is a finite extension of $\bfQ_p$. Let $\varpi_k$ be a uniformizer of $\Oo_k$.  As $\mF$ is a cusp form we know that $\varphi_{\mF,k}(\mF)$ is a cusp form for an infinite subset $\mW$ of $k$s as above. Assume that $k\in \mW$. 
Similarly to the case of weight 1,  composing $\rho_{\mF}$ with $\varphi_{\mF,k}$ gives rise to a Galois representation $\rho^k_{\mF}: G_{\Sigma}\to \GL_2(\varphi_{\mF,k}(L))$. By cuspidality of $\varphi_{\mF,k}(\mF)$, this Galois representation is irreducible. 
Then by Proposition 2.1 in \cite{Ribet76} there exists a lattice inside the representation space of $\rho^k_{\mF}$ such that with respect to that lattice the mod $\varpi_k$ reduction $\ov{\rho}^k_{\mF}$ of $\rho_{\mF}^k$ is of the form \be \label{cong11} \ov{\rho}^k_{\mF} = \bmat 1 &*\\ &\ov{\chi}\emat \not\cong 1 \oplus \ov{\chi}. \ee

Let $m_k$ be the largest positive integer $m$ such that $\varphi_{\mF}(c(1,t\mF))\equiv \varphi_{\mF,k}(c(1,t\mF))$ mod $\varpi^m$ for all $t \in \bfT$. Note that this makes sense as $\varpi\in \Oo_k$. 
Using congruence \eqref{cong11} and Theorem in \cite{Urban01} we conclude that there is a lattice $\Lambda$ in the space of $\rho^k_{\mF}$ such that with respect to a certain basis $\{e_1, \e_2\}$ one has \be \label{modvarpimk}\rho^k_{\mF} = \bmat 1 &*\\ &\chi\emat \pmod{\varpi^{m_k}} \ee with $*$ still not split mod $\varpi_k$.

 So by Theorem \ref{Carayol}(1) we get $$\rho^k_{\mF}|_{D_p}\cong_{E_k}\bmat \chi \beta^{-1} \omega^{1-k} \epsilon^{k-1}&* \\ &\beta\emat,$$ where $\beta$ is unramified and maps $\Frob_p$ to  $\varphi_{\mF,k}(c(p,\mF))$. 

We claim that it is possible to change the basis of $\Lambda$ such that in that new basis $$\rho^k_{\mF}|_{D_p}=\bmat \chi \beta^{-1} \omega^{1-k} \epsilon^{k-1}&* \\ &\beta\emat.$$

By ordinarity
there exists a vector $v=ae_1 + be_2$, with $a, b \in E_k$, on which $D_p$ acts
by $\chi \beta^{-1} \omega^{1-k} \epsilon^{k-1}$. Multiply this by a power of $\varpi_k$ such that $\varpi_k^s a, \varpi_k^s b \in \Oo_k$ and $v':= \varpi_k^s v
\not \equiv 0 \mod \varpi_k$. 

Assume that $\varpi_k^s a$  is a
$\varpi_k$-unit. Then we have $\Lambda=\Oo_k v' + \Oo_k e_2$. (If $\varpi_k^s b \in \Oo_k^\times$
then $\Lambda=\Oo_k v' + \Oo_k e_1$.)
As $\det(\rho^k_{\mF})=\chi  \omega^{1-k} \epsilon^{k-1}$ we see that in the basis $\mB'=\{v', e_2\}$ (respectively $\mB'=\{v', e_1\}$) we have \be \label{ordrhofk} \rho^k_{\mF}|_{D_p}=\bmat \chi \beta^{-1} \omega^{1-k} \epsilon^{k-1}&* \\ &\beta\emat. \ee

We know that $\beta \equiv 1 \mod{\varpi_k}$ as $\mF \in \mS$ and $\chi|_{D_p} \not \equiv 1 \mod{\varpi_k}$ by assumption, hence $\beta \not \equiv \chi|_{D_p} \mod{\varpi_k}$. So Lemma \ref{uniqueness of Ti} tells us that $\beta\equiv 1$ mod $\varpi^{m_k}$ by comparing the above to \eqref{modvarpimk}. 

Thus we get that in the basis $\mB'$ of $\Lambda$ we have $$\rho_{\mF}^k|_{D_p} = \bmat \chi & *'\\ & 1\emat \pmod{\varpi^{m_k}=\varpi_k^{e_km_k}},$$ where $e_k$ is the ramification index of $\Oo_k$ over $\Oo$.  Comparing this with \eqref{modvarpimk} we conclude that there exists a matrix $\bmat A&B\\C&D\emat \in \GL_2(\Oo_k/\varpi^{m_k})$ such that \be \label{matrix9} \bmat A&B\\C&D\emat \bmat 1&*\\ & \chi\emat = \bmat \chi &*'\\ & 1 \emat \bmat A&B\\C&D\emat\ee where $\chi$ and $*$ are considered after restrictions to $D_p$.  We first note that $\ov{C}\neq 0$ mod $\varpi_k$. Indeed, if the reduction $\ov{C}$ of $C$ mod $\varpi_k$ were 0 then reducing the equation \eqref{matrix9} mod $\varpi_k$ and comparing the top-left entries we would get $\ov{A}=\ov{\chi}\ov{A}$. Note that if $\ov{C}=0$ then we must have $\ov{A}\neq 0$ as the matrix $\bmat A&B\\ C&D\emat$ is invertible. This contradicts the assumption that $\ov{\chi}|_{D_p}\neq 1$. Hence we must have that $C$ is a unit in $\Oo_k/\varpi^{m_k}$.

Now compare the top-left entries of \eqref{matrix9} to get that $A=A\chi + *'C$, from which we get that $*'=(A/C)(1-\chi)$. Hence the cocycle induced by $*'$ in $H^1(D_p, \Oo/\varpi^{m_k}(\chi))$  is a coboundary. In other words, $$\rho_{\mF}^k|_{D_p} \cong \chi \oplus 1 \pmod{\varpi^{m_k}}$$

Hence the $*$ in \eqref{modvarpimk}  gives rise to an element  $c_k\in H^1_{\Sigma-\{p\}}(\bfQ, \Oo_k/\varpi^{m_k}(\chi^{-1}))$ which is not annihilated by $\varpi^{m_k-1}$.
 We now use Lemma \ref{lower bound 21} for primes $\ell \mid N$ or such that $\tilde\chi(\ell)\ell \not \equiv 1 $ mod $p$ to deduce that $c_k \in H^1_{\emptyset}(\bfQ, \Oo_k/\varpi^{m_k}(\chi^{-1}))$ using the fact that by  assumption (2) in section \ref{The residual representation} this covers all the primes in $\Sigma-\{p\}$.

We now claim that there exists an element of $H^1_{\emptyset}(\bfQ,E/\Oo(\chi^{-1}))$ which is not annihilated by $\varpi^{m_k-1}$. For this first note that for every positive integer $r$ one has  $\Oo_k/\varpi^r=(\Oo/\varpi^r)^s$ where $s=[E_k:E]$.  As the formation of Selmer groups commutes with direct sums we get $$H^1_{\rm f}(\bfQ, \Oo_k/\varpi^{m_k})\cong (H^1_{\rm f}(\bfQ, \Oo/\varpi^{m_k}(\chi^{-1})))^s.$$ 
Since $\varpi^{m_k-1}c_k \neq 0$ we conclude that there must exist an element $c'_k\in H^1_{\emptyset}(\bfQ ,\Oo/\varpi^{m_k}(\chi^{-1}))$ such that $\varpi^{m_k-1}c'_k\neq 0$.

By \eqref{functoriality} we have that $\iota: H^1_{\emptyset}(\bfQ,\Oo/\varpi^r(\chi^{-1}))\rightarrow H^1_{\emptyset}(\bfQ, E/\Oo(\chi^{-1}))[\varpi^r]$  is an isomorphism.
Therefore the elements $c_k$ give rise to an infinite sequence of elements  $c'_k\in H^1_{\emptyset}(\bfQ, E/\Oo(\chi^{-1}))$ for $k \in \mW$ with the property that $\varpi^{m_k-1}c'_k\neq 0$. 

As $m_k \to \infty$  when $k$ approaches 1 $p$-adically this forces $H^1_{\emptyset}(\bfQ, E/\Oo(\chi^{-1}))$ to be infinite. However, one has by Proposition \ref{clgroup} that  $$H^1_{\emptyset}(\bfQ, E/\Oo(\chi^{-1}))\cong \Hom(\Cl(\bfQ(\chi)),E/\Oo(\chi^{-1}))^{\Gal(\bfQ(\chi)/\bfQ)},$$ so we get a contradiction to the finiteness of class groups.
\end{proof}

\subsection{Modularity of reducible deformations} From now on we will assume the non-crossing assumption of Proposition \ref{inject3}, i.e., that there exists an extension $\varphi: \OL\to \ov{\bfQ}_p$ of $\nu_{1,1}$ 
such that
$\varphi_{\mF}\circ \lambda_{\mF}\neq \varphi_{\mF'}\circ \lambda_{\mF'}$ for all $\mF,\mF'\in \mN$ with $\mF'\neq \mF$.
Then by Proposition \ref{inject3} we can identify $\bfT_{1,\fm}$ with its image inside $\prod_{\mF\in \mS} \varphi_{\mF}(\Oo_{L_{\mF}})$ under the map $\phi$. 
 For every $\mF\in \mS$, we have that $\rho^1_{\mF}: G_{\Sigma} \to \GL_2(\varphi_{\mF}(L_{\mF}))$ is irreducible by Theorem \ref{irreducibility}. 
 Then by Proposition 2.1 in \cite{Ribet76} there exists a $G_{\Sigma}$-stable lattice $\Lambda$ in the space of $\rho_{\mF, \Lambda}^1$ such that with respect to that lattice we have 

\be \label{Ribet1} \ov{\rho}_{\mF, \Lambda}^1 = \bmat 1 & * \\ & \ov{\chi}\emat \not\cong 1\oplus \ov{\chi}.\ee

Furthermore by (1) in Theorem \ref{Carayol} we know that $\rho^1_{\mF}|_{D_p}$ has a $D_p$-stable $\varphi_{\mF}(L_{\mF})$-line $L$ on which $D_p$ acts via $\chi \beta^{-1}$ and a quotient on which $D_p$ acts by $\beta$ with $\beta$ an unramified $\varphi_{\mF}(\Oo_{L_{\mF}})$-valued character which reduces to the identity  mod $\varpi$ (because $\mF\in \mS$ and $\ov{\chi}|_{D_p}\neq 1$ by our assumption). 

Arguing as in the proof of Theorem \ref{irreducibility} we conclude that this combined with  \eqref{Ribet1} shows that $\ov{\rho}^1_{\mF, \Lambda}$ splits when restricted to $D_p$. 
In particular, it splits when restricted to $I_p$. 
Hence it follows from Proposition \ref{uniqueness} that for any $\mF, \mF'$ as above we can choose  lattices $\Lambda_{\mF}, \Lambda_{\mF'}$ such that $\ov{\rho}^1_{\mF, \Lambda_{\mF}}= \ov{\rho}^1_{\mF', \Lambda_{\mF'}}$ (note that $\mS\neq \emptyset$). For each $\mF \in \mS$ we make such a choice and set $$\rho_0 = \ov{\rho}^1_{\mF, \Lambda_{\mF}}.$$

For any finite set of primes $\Sigma'$ we define $\bfT^{\Sigma'}$ to be the $\Oo$-subalgebra of $\bfT_{1,\fm}$ generated by the images under the map $\bfT\twoheadrightarrow \bfT_1
\to \bfT_{1, \fm}$ of the operators $T_p$ and $T_{\ell}$ for all primes $\ell \not\in \Sigma'$. In particular, for $\Sigma'\subset \Sigma''$ there is a natural $\Oo$-algebra map $\bfT^{\Sigma''} \hookrightarrow \bfT^{\Sigma'}$ and $\bfT^{\emptyset}=\bfT_{1, \fm}$. Define $J^{\Sigma'}$ as the ideal of  $\bfT^{\Sigma'}$ generated by the image of the set $S^{\Sigma'}=\{T_{\ell}-1-\tilde{\chi}(\ell)\mid \ell \not\in \Sigma', \ell \neq p\}\cup \{T_p-1\}$. Note that $S^{\emptyset}$ is the same as the set $S$ in Lemma \ref{generation of Jk}, hence $J^{\emptyset}=J_{1, \fm}$. The injection $\bfT^{\Sigma''} \hookrightarrow \bfT^{\Sigma'}$ induces an $\Oo$-algebra map $\bfT^{\Sigma''}/J^{\Sigma''} \to \bfT^{\Sigma'}/J^{\Sigma'}$ which is surjective as the structure map $\Oo \twoheadrightarrow \bfT^{\Sigma'}/J^{\Sigma'}$ is surjective by the definition of $S^{\Sigma'}$. Hence, in particular, we have an $\Oo$-algebra surjection \be\label{Osurj} \bfT^{\Sigma}/J^{\Sigma} \twoheadrightarrow \bfT_{1, \fm}/J_{1, \fm},\ee
 where we recall that $\Sigma$ is our fixed finite set of primes containing $p$ and primes dividing $N$.

The following proposition gives us that every reducible deformation  (i.e., every deformation that factors through $R/I$) is modular. 
\begin{prop} \label{RtoT} There exists a surjective $\Oo$-algebra map $\Phi: R \to \bfT^\Sigma$ given by $\tr \rho^{\rm univ}(\Frob_{\ell}) \mapsto (\varphi_\mF(c(\ell, \mF)))_{\mF \in \mS} $ for all $\ell \not\in \Sigma$ (cf. Proposition \ref{genbytraces} that this indeed defines a map) which induces an isomorphism $R/I \xrightarrow{\sim} \bfT^{\Sigma}/J^{\Sigma}$.
\end{prop}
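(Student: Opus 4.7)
The plan is threefold: for each $\mF\in\mS$, produce a deformation of $\rho_0$ coming from $\rho^1_\mF$; collect these into a single surjection $\Phi:R\twoheadrightarrow\bfT^\Sigma$ using the universal property of $R$ and Proposition \ref{inject3}; and finally prove the induced map on reducible quotients is an isomorphism by sandwiching $R/I$ and $\bfT^\Sigma/J^\Sigma$ between themselves and $\Oo/L(0,\tilde\chi)$.

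First, fix $\mF\in\mS$. Theorem \ref{irreducibility} ensures $\rho^1_\mF$ is irreducible, and Ribet's lemma (\cite{Ribet76}, Proposition 2.1) produces a $G_\Sigma$-stable lattice $\Lambda_\mF$ for which $\ov{\rho}^1_{\mF,\Lambda_\mF}$ is a non-split extension of $\ov{\chi}$ by $1$, hence isomorphic to $\rho_0$ by Proposition \ref{uniqueness}. I would then verify that $\rho^1_{\mF,\Lambda_\mF}$ satisfies the three deformation conditions of section \ref{deformationproblem}: (i) is immediate since $\det\rho^1_\mF=\chi$ (by the computation of determinants in Theorem \ref{Carayol} together with $\theta=\tilde\chi\tilde\omega^{-1}$); (ii) follows from Theorem \ref{Carayol}(1), using that $\mF\in\mS$ forces the unramified quotient character $\beta$ of $\rho^1_\mF|_{D_p}$ to satisfy $\beta\equiv 1\pmod\varpi$; and (iii) follows from Theorem \ref{Carayol}(2) combined with $\det\rho^1_\mF=\chi$ for primes $\ell\mid N$, while for primes $\ell\in\Sigma$ with $\ell\nmid Np$ the representation $\rho^1_\mF$ and the character $\chi$ are both unramified at $\ell$, so the condition is automatic.

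By universality, each $\rho^1_{\mF,\Lambda_\mF}$ yields an $\Oo$-algebra map $\pi_\mF:R\to\varphi_\mF(\Oo_{L_\mF})$, which I combine into $\Pi:R\to\prod_{\mF\in\mS}\varphi_\mF(\Oo_{L_\mF})$. By Proposition \ref{genbytraces}, the image of $\Pi$ is topologically generated by the tuples $(\varphi_\mF(c(\ell,\mF)))_{\mF\in\mS}=\phi(T_\ell)$ for $\ell\notin\Sigma$; it also contains $\phi(T_p)$, the image of the element $\psi_2^{\rm univ}(\Frob_p)\in R$ coming from the $p$-ordinary decomposition of $\rho^{\rm univ}|_{D_p}$, because Theorem \ref{Carayol}(1) identifies this universal unramified character with $\beta$ after matching bases. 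Identifying $\bfT^\Sigma$ with $\phi(\bfT^\Sigma)\subset\prod_\mF\varphi_\mF(\Oo_{L_\mF})$ via the injection $\phi$ of Proposition \ref{inject3} (whose hypothesis we assume), and using that $\bfT^\Sigma$ is a finitely generated $\Oo$-module and hence closed, the image of $\Pi$ is precisely the $\Oo$-subalgebra generated by $T_p$ and $\{T_\ell:\ell\notin\Sigma\}$, which is $\bfT^\Sigma$ by definition. This yields the surjection $\Phi:R\twoheadrightarrow\bfT^\Sigma$ of the statement.

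Finally, modulo $J^\Sigma$ we have $T_\ell\equiv 1+\tilde{\chi}(\ell)$ for $\ell\notin\Sigma,\ell\neq p$ and $T_p\equiv 1$ by Lemma \ref{generation of Jk}, so the pseudocharacter $\Phi\circ\tr\rho^{\rm univ}$ becomes the reducible trace $1+\chi$; combined with the fact from the proof of Proposition \ref{bound on R/I} that $\psi_2^{\rm univ}(\Frob_p)\equiv 1\pmod I$ (using Lemma \ref{uniqueness of Ti} and $\ov{\chi}|_{D_p}\neq 1$), this forces $\Phi(I)\subseteq J^\Sigma$ and descends $\Phi$ to a surjection $\bar\Phi:R/I\twoheadrightarrow\bfT^\Sigma/J^\Sigma$. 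Composing with the surjection \eqref{Osurj} and Corollary \ref{surjT1} yields
\[ R/I\twoheadrightarrow\bfT^\Sigma/J^\Sigma\twoheadrightarrow\bfT_{1,\fm}/J_{1,\fm}\twoheadrightarrow\Oo/L(0,\tilde\chi), \]
while Proposition \ref{bound on R/I} together with \eqref{size of class group} bounds $\#R/I$ above by $\#\Oo/L(0,\tilde\chi)$, forcing every arrow in the chain to be an isomorphism. The main obstacle will be the bookkeeping in the third paragraph: matching the ordinary decomposition of the universal deformation on $D_p$ with the decomposition of $\rho^1_{\mF,\Lambda_\mF}|_{D_p}$ from Theorem \ref{Carayol}(1), so that $\psi_2^{\rm univ}(\Frob_p)$ maps exactly to $T_p$; this is where the $p$-distinguished hypothesis $\ov\chi|_{D_p}\neq 1$ together with Lemma \ref{uniqueness of Ti} enters essentially.
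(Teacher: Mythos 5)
Your proposal is correct and follows essentially the same route as the paper: Ribet's lemma plus Theorem \ref{irreducibility} and Proposition \ref{uniqueness} to produce lattices reducing to $\rho_0$, verification of deformation conditions (i)--(iii) via Theorem \ref{Carayol}, surjectivity onto $\bfT^\Sigma$ from Proposition \ref{genbytraces}, and the sandwich $R/I \twoheadrightarrow \bfT^\Sigma/J^\Sigma \twoheadrightarrow \Oo/L(0,\tilde\chi)$ played against the upper bound of Proposition \ref{bound on R/I}. The only cosmetic difference is your treatment of $T_p$: you use the canonically defined $\psi_2^{\rm univ}(\Frob_p)$ (legitimate by Lemma \ref{uniqueness of Ti} and $\ov{\chi}|_{D_p}\neq 1$), whereas the paper takes the Hensel root $U\equiv 1\pmod{\fm_R}$ of the characteristic polynomial of $\rho^{\rm univ}(\sigma)$ for a lift $\sigma$ of $\Frob_p$ with $\ov{\chi}(\sigma)\neq 1$; the two devices are equivalent under the $p$-distinguishedness hypothesis.
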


\begin{proof}
Let $\mF \in \mS$. By the discussion above we know that there exists a lattice $\Lambda$ such that $\ov{\rho}^1_{\mF, \Lambda}$
 is equal to $\rho_0$.

By ordinarity of $\mF$ (see Theorem \ref{Carayol}(1)) we know that $\rho_{\mF, \Lambda}^1|_{D_p}\cong \bmat \chi \phi_{c_p}^{-1}&*\\ &  \phi_{a_p}\emat$ for $a_p=\varphi_{\mF}(c(p, \mF))$. Recall that $a_p \equiv 1 \mod{\varpi}$.  Since $N$ is the conductor of $\tilde\chi$ and the tame level of $\mF$  we see that by Theorem \ref{Carayol}(2) the condition (iii) of our deformation conditions is satisfied. Indeed, if $\ell \mid N$ we have $\rho^1_{\mF, \Lambda}|_{I_{\ell}}=\bmat \psi \\ & 1\emat$, so $\psi=\chi$ (as $\det \rho_{\mF, \Lambda}=\chi$). If $\ell \in \Sigma$, but $\ell \nmid N$, then $\rho_{\mF,\Lambda}^1$ is unramified at $\ell$. Hence $\rho_{\mF, \Lambda}^1$ is a deformation of $\rho_0$. 

So, we get an $\Oo$-algebra map $\Phi: R \to \prod_{\mF \in \mS} \varphi_\mF(\Oo_{L_\mF})$. We claim that ${\rm im}(\Phi) \supset \bfT^\Sigma$. 
Clearly all operators $T_{\ell}$ for primes $\ell \nmid Np$ are in the image of $\Phi$.
For $T_p$ we adapt an argument from the proof of \cite{WakeWangErickson21} Proposition A.2.3:  Since we assume that $\overline \chi|_{D_p} \neq 1$ there exists $\sigma \in D_p$ lifting $\Frob_p \in D_p/I_p\cong G_{\bfF_p}$ such that $\overline \chi(\sigma) \neq 1$. It is clear that it can be done if $\overline{\chi}|_{I_p} =1$. Otherwise take $\sigma'$, any lift of Frobenius. If it happens to satisfy $\ov{\chi}(\sigma')=1$, multiply $\sigma'$ by an element in inertia for which $\overline \chi$ is non-trivial.

By  deformation condition (ii) the characteristic polynomial for $\rho^{\rm univ}(\sigma)$  is $$x^2-(\psi_1(\sigma)+\psi_2(\sigma))x + \chi(\sigma)$$
and this reduces modulo $\fm_R$ to $$(x-1)(x-\overline \chi(\sigma)).$$

Let $U \in R$ be the root of the characteristic polynomial of $\rho^{\rm univ}(\sigma)$ such that $U \equiv 1 \mod{\fm_R}$ (which exists and is unique by Hensel's lemma). We claim that $\Phi(U)=T_p$.  It suffices to check for each $\mF \in \mS$ that $\Phi(U)$ projects to $\varphi_{\mF}(c(p, \mF))$ in $\varphi_\mF(\Oo_{L_\mF})$. Fix such an $\mF$ and write $a_p:=\varphi_{\mF}(c(p, \mF))$. Since $\rho_\mF^1$ is a deformation of $\rho_0$ we know that $U$ maps to a root of the characteristic polynomial of $\rho_\mF^1(\sigma)$. Since $\sigma$ is a lift of $\Frob_p$ we know by Theorem \ref{Carayol}(1) that this characteristic polynomial equals $$(x-a_p)(x-\chi(\sigma)a_p^{-1}),$$ so $U$ must map to $a_p \equiv 1 \mod{p}$, as required.

 The arguments above prove that ${\rm im}(\Phi) \supset \bfT^\Sigma$.  On the other hand, Proposition \ref{genbytraces} implies that this image is contained in $\bfT^{\Sigma}$, so we conclude that it equals $\bfT^\Sigma$. Composing the map $\Phi: R\twoheadrightarrow \bfT^{\Sigma}$ with $\rho^{\rm univ}$ we get a representation into $\GL_2(\bfT^{\Sigma})$ whose trace is reducible modulo $J^{\Sigma}$. So we get 
$\Phi(I) \subseteq J^{\Sigma}$. Hence using Corollary \ref{surjT1} and \eqref{Osurj} we obtain a surjection $$R/I \twoheadrightarrow \bfT^\Sigma/J^{\Sigma} \twoheadrightarrow \bfT_{1,\fm}/J_{1,\fm} \cong  \bfT_1/J_1 \twoheadrightarrow   \Oo/L(0, \chi).$$

By Proposition \ref{bound on R/I} we get that $\# R/I \leq \# C_F^{\chi^{-1}}$. The Proposition now follows from the fact that $\# C_F^{\chi^{-1}}=\# \Oo/L(0, \chi)$ (see \eqref{size of class group}).
\end{proof}

\subsection{The main result} \label{summary} For the reader's convenience we repeat here all our assumptions and also indicate how they are used.

Let $p>2$ be a prime and $N$ a positive integer with $p\nmid N$. Let $\tilde\chi: (\bfZ/Np\bfZ)^{\times} \to \bfC^{\times}$ 
denote a Dirichlet character of order prime to $p$ [this corresponds to type S characters considered in \cite{Wiles90} and so is used in Theorem \ref{Wiles2}; it is also used in
 Proposition \ref{clgroup} and Theorem \ref{principality}] with $\tilde{\chi}(-1)=-1$. Write $\tilde\chi=\tilde\chi_p\tilde\chi_N$ where $\tilde\chi_p$ is a Dirichlet character mod $p$ (i.e., a character of $(\bfZ/p\bfZ)^{\times}$) and $\tilde\chi_N$ is  a character mod $N$. Assume $\tilde\chi_N$ is primitive [as required by \cite{Wiles90}].

Let $\Sigma$ be a finite set of primes containing $p$ and the primes dividing $N$. 
If $\ell\in \Sigma$ is a prime such that $\ell \nmid Np$ then we require that $\ell$ satisfies: 
\begin{enumerate}
\setcounter{enumi}{1}
\item  $\tilde\chi(\ell)\ell \not \equiv 1$ mod $\varpi$;
\item $\tilde\chi(\ell) \not \equiv \ell$ mod $\varpi$.
\end{enumerate}
[Assumption (2) comes in for Propositions \ref{uniqueness}, \ref{bound on R/I}, Theorem \ref{principality} and \ref{irreducibility} via Lemma \ref{lower bound 21}, while assumption (3) is only used for Theorem \ref{principality}).

Write $\chi: G_{\Sigma} \to \Oo^{\times}$ for the Galois character associated to $\tilde\chi$ and $\ov{\chi}: G_{\Sigma} \to \bfF^{\times}$ for its mod $\varpi$ reduction. 
We assume that $\ov{\chi}|_{D_p} \neq 1$.
[This is used e.g. for Propositions \ref {infi}, \ref{bound on R/I} and Theorem \ref{irreducibility}, and to ensure that the Galois representations associated to an ordinary eigenform have a lattice reducing to a $\rho_0$ split at $I_p$.]

Write $F:= \bfQ(\chi)$ for the splitting field of $\chi$ and ${\rm Cl}(F)$ for the class group of $F$. Set $C_F := {\rm Cl}(F)\otimes_{\bfZ} \Oo$. For any character $\psi: \Gal (F/\bfQ) \to \Oo^{\times}$ we write $C_F^{\psi}$ for the $\psi$-eigenspace of $C_F$ under the canonical action of $\Gal(F/\bfQ)$.

Assume that $C_F^{\chi^{-1}}\neq 0$ (which is equivalent to assuming that $\val_p(L(0,\chi))>0$). Consider a continuous homomorphism 
$\rho_0: G_{\Sigma} \to \GL_2(\bfF)$ of the form $$\rho_0 = \bmat 1 & * \\ 0 & \ov{\chi} \emat \not\cong 1 \oplus \ov{\chi}$$ such that $\rho_0|_{D_p}\cong 1\oplus  \ov{\chi}|_{D_p}$. In fact our assumptions force the existence of such a $\rho_0$ (see \eqref{Ribet1} and the discussion  following it).

Put $\bfT_1:= \bfT/\ker \nu_{1,1} \bfT$ to be the weight 1 specialisation of $\bfT$, the cuspidal 
$\Lambda$-adic ordinary Hecke algebra of tame level $N$. Write $\fM \subset \bfT$ for the maximal ideal containing the Eisenstein ideal $J$ and  $\fm \subset \bfT_1$ for the maximal ideal containing its Eisenstein ideal  i.e., the ideal of $\bfT_1$ generated by $\{T_\ell-1-\tilde \chi(\ell) | \ell \neq p\} \cup \{T_p-1\}$.  We define $\bfT^{\Sigma}$ to be the $\Oo$-subalgebra of  the localisation  $\bfT_{1,\fm}$  of $\bfT_1$  at the ideal $\fm$  generated by the images under the map $\bfT\twoheadrightarrow \bfT_1
\to \bfT_{1, \fm}$ of the operators $T_p$ and $T_{\ell}$ for all primes $\ell \not\in \Sigma$.  

Let $L\subset \ov{\bfQ}_p$ be a finite extension of $\bfQ_p$ which contains the values of all $\Oo$-algebra homomorphisms $\lambda_{\mF}: \bfT_{\fM}\to \ov{\bfQ}_p$ (the reason for the subscript $\mF$ is the fact that these homomorphisms arise from newforms $\mF$ that are congruent to a certain Eisenstein series).  Write $\OL$ for the ring of integers of $L$. Suppose there exists $\varphi: \OL\to \ov{\bfQ}_p$, which separates the different $\lambda_{\mF}$s, i.e., such that $\varphi\circ \lambda_{\mF}=\varphi\circ \lambda_{\mF'}$ only if $\lambda_{\mF}=\lambda_{\mF'}$. 
[This is needed for proving that $\bfT_{1,\fm}$ is reduced  in  Proposition \ref{inject3} and showing $R \twoheadrightarrow \bfT^\Sigma$ in Proposition \ref{RtoT}.]

\begin{thm} \label{mainthm}
Assume $\dim_{\bfF}C_F^{\chi^{-1}}\otimes_{\Oo}\bfF=1$  and $C_F^{\chi}=0$.
 Assume further that at least one of the following conditions is satisfied: \begin{itemize}
\item[(i)] $e<p-1$ where $e$ is the ramification index of $p$ in $F$ or
\item[(ii)] $\chi=\omega^s$ for some integer $s$ or \item[(iii)] $\chi_N(p)\neq 1$.\end{itemize}  Then the $\Oo$-algebra map $\Phi: R \to \bfT^\Sigma$ given by $\tr \rho^{\rm univ}(\Frob_{\ell}) \mapsto (\varphi(c(\ell, \mF)) )_{\mF \in \mS} $ for all $\ell \not\in \Sigma$  is an isomorphism. Here $R$ is the ordinary universal deformation ring of $\rho_0$ defined in section \ref{deformationproblem}.
\end{thm}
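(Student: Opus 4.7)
The plan is to derive the isomorphism by applying the commutative algebra criterion of \cite{BergerKlosin11} to the surjection $\Phi: R \twoheadrightarrow \bfT^{\Sigma}$ produced by Proposition \ref{RtoT}. That criterion upgrades such a surjection into an isomorphism once one knows (a) the kernel is controlled by a \emph{principal} reducibility ideal $I \subset R$ with $\Phi(I) \subseteq J^{\Sigma}$; (b) the induced map $R/I \to \bfT^{\Sigma}/J^{\Sigma}$ is an isomorphism of finite $\Oo$-modules; and (c) $\bfT^{\Sigma}$ is a finitely generated $\Oo$-torsion free module.

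I would verify (a) by invoking Theorem \ref{principality}(2): the standing hypotheses of Theorem \ref{mainthm}, namely $\dim_{\bfF} C_F^{\chi^{-1}} \otimes_{\Oo} \bfF = 1$ (our Assumption (1)), $C_F^{\chi} = 0$, and one of the alternatives (i)--(iii), are exactly those needed to conclude principality of $I$. For (b) I would combine the isomorphism $R/I \xrightarrow{\sim} \bfT^{\Sigma}/J^{\Sigma}$ from Proposition \ref{RtoT} with the chain of surjections
\[
R/I \xrightarrow{\sim} \bfT^{\Sigma}/J^{\Sigma} \twoheadrightarrow \bfT_{1,\fm}/J_{1,\fm} \twoheadrightarrow \Oo/L(0,\tilde{\chi})
\]
established in the proof of Proposition \ref{RtoT}, and with the upper bound $\#R/I \leq \#C_F^{\chi^{-1}} = \#\Oo/L(0,\tilde{\chi})$ from Proposition \ref{bound on R/I} and \eqref{size of class group}. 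A cardinality squeeze forces every arrow in the chain to be an isomorphism, giving finiteness and $\#R/I = \#\bfT^{\Sigma}/J^{\Sigma} = \#\Oo/L(0,\tilde{\chi})$. Condition (c) is immediate: $\bfT^{\Sigma}$ is a subalgebra of $\bfT_{1,\fm}$, and by Lemma \ref{inject} the latter is a free $\Oo$-module of finite rank, so $\bfT^{\Sigma}$ inherits finite generation and $\Oo$-torsion freeness.

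The real difficulty, as expected, is not the present deduction but the upstream ingredients it consumes. Principality of $I$ (Theorem \ref{principality}) required a delicate analysis of Selmer groups via the structure of the local units at $p$ in $F$ and class field theory; the very existence of the map $\Phi$ relied on producing lattices reducing to $\rho_0$ inside the Galois representation attached to each weight-one specialisation, which in turn required irreducibility of $\rho_{\mF}^{1}$ (Theorem \ref{irreducibility})---a property not automatic in weight one and obtained by a limiting argument in higher weights---as well as the non-crossing hypothesis ensuring that $\phi$ is injective (Proposition \ref{inject3}). With these in hand, the commutative algebra criterion applies and yields the isomorphism $R \xrightarrow{\sim} \bfT^{\Sigma}$, completing the proof.
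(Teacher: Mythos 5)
Your proposal is correct and follows essentially the same route as the paper: the paper's proof of Theorem \ref{mainthm} likewise applies Theorem 6.9 of \cite{BergerKlosin11} to the commutative square formed by $\Phi: R \twoheadrightarrow \bfT^{\Sigma}$ and the isomorphism $R/I \xrightarrow{\sim} \bfT^{\Sigma}/J^{\Sigma}$ from Proposition \ref{RtoT}, citing Theorem \ref{principality} for the principality of $I$. Your additional verifications (the cardinality squeeze giving the bottom isomorphism, and the $\Oo$-torsion freeness of $\bfT^{\Sigma}$ via Lemma \ref{inject}) are simply the content of the cited propositions spelled out, so nothing is missing or different in substance.
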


The assumption that  $\dim_{\bfF}C_F^{\chi^{-1}}\otimes_{\Oo}\bfF=1$ is needed for Proposition \ref{uniqueness} and via this also Proposition \ref{infi}. Finally, the assumption that $C_F^{\chi}=0$ and assumptions (i)-(iii) are used in Theorem \ref{principality}.

\begin{proof}[Proof of Theorem \ref{mainthm}] The existence of the map $\Phi$ was proved in Proposition \ref{RtoT}. We apply Theorem 6.9 in \cite{BergerKlosin11} to the commutative diagram
$$\xymatrix{R \ar[r]^{\Phi}\ar[d]& \bfT^{\Sigma}\ar[d]\\R/I \ar[r]& \bfT^{\Sigma}/J^{\Sigma}}$$ noting that the top arrow is surjective and the bottom arrow is an isomorphism by Proposition \ref{RtoT} and $I$ is principal by Theorem \ref{principality}. 
 \end{proof}

Let us record a consequence of Theorem \ref{mainthm}. 
\begin{cor} \label{Jprincipal} The Eisenstein ideal $J^{\Sigma}$ is principal. 
\end{cor}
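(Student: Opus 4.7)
The plan is to transport the principality statement from the deformation side to the Hecke side via the $R=T$ isomorphism just established. By Theorem \ref{mainthm}, the map $\Phi:R\to\bfT^{\Sigma}$ is an isomorphism of $\Oo$-algebras, so it suffices to show that $\Phi(I)=J^{\Sigma}$ and that $I$ itself is principal.

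First I would observe that the equality $\Phi(I)=J^{\Sigma}$ is essentially already in hand from the construction of $\Phi$ in Proposition \ref{RtoT}. On the one hand, composing $\rho^{\mathrm{univ}}$ with $\Phi$ produces a representation into $\GL_2(\bfT^{\Sigma})$ whose trace becomes reducible modulo $J^{\Sigma}$, which gives the inclusion $\Phi(I)\subseteq J^{\Sigma}$. On the other hand, the induced quotient map $R/I\to\bfT^{\Sigma}/J^{\Sigma}$ is an isomorphism by Proposition \ref{RtoT}, and since $\Phi$ is itself an isomorphism by Theorem \ref{mainthm}, this forces $\Phi(I)=J^{\Sigma}$.

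Second, I would invoke Theorem \ref{principality}(2) to get that $I\subseteq R$ is principal. The hypotheses of that theorem are exactly the ones imposed in Theorem \ref{mainthm}: we assume $C_F^{\chi}=0$ and at least one of (i) $e<p-1$, (ii) $\chi=\omega^s$, or (iii) $\tilde{\chi}_N(p)\neq 1$. Thus $I=(x)$ for some $x\in R$, and applying the ring isomorphism $\Phi$ yields
\[
J^{\Sigma}=\Phi(I)=\Phi((x))=(\Phi(x)),
\]
so $J^{\Sigma}$ is principal, generated by the image of any generator of $I$.

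There is no real obstacle here; the corollary is a formal consequence of combining the $R=T$ isomorphism of Theorem \ref{mainthm} with the principality result of Theorem \ref{principality}(2), once one observes that $\Phi$ identifies the reducibility ideal with the Eisenstein ideal. The only substantive point worth double-checking is that the conditions ensuring principality of $I$ in the ordinary case (as opposed to the split case) are precisely those already assumed in Theorem \ref{mainthm}, which is indeed so by inspection.
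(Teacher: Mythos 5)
Your argument is correct and is exactly the route the paper intends: Corollary \ref{Jprincipal} is stated as an immediate consequence of Theorem \ref{mainthm}, with the identification $\Phi(I)=J^{\Sigma}$ coming from Proposition \ref{RtoT} (the induced map $R/I\to\bfT^{\Sigma}/J^{\Sigma}$ being an isomorphism forces $\Phi^{-1}(J^{\Sigma})=I$ once $\Phi$ is known to be an isomorphism) and the principality of $I$ coming from Theorem \ref{principality}(2) under the hypotheses already in force. No gaps.
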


\subsection{Examples} \label{Examples}

In this section we will demonstrate  that our non-crossing assumption for the Hida families in  Proposition \ref{inject3} (and therefore also in Theorem \ref{mainthm}) is often satisfied. We will also present an example of a character $\chi$ that is unramified at $p$ and satisfies all the other assumptions (except that we were not able to check the non-crossing assumption). Such unramified characters cannot be handled by the methods of \cite{SkinnerWiles97}. 

As mentioned in the Introduction the related question of the geometry of the eigencurve at classical weight one points has been studied extensively. In particular, Bella{\"\i}che-Dimitrov \cite{BellaicheDimitrov}  prove that the eigencurve is smooth at such points if they are \emph{regular}, i.e. have distinct roots of the Hecke polynomial at $p$. This translates to the form lying in a unique Hida family up to Galois conjugacy. Our $p$-distinguishedness assumption ensures that our forms are regular. Bella{\"\i}che-Dimitrov further prove that the eigencurve is \'etale at such points if there does not exist a real quadratic field $K$ in which $p$ splits and such that the corresponding Galois representation becomes reducible over $K$. Note that our assumption that $\ov{\rho}^{\rm ss}=1 \oplus \chi$ rules out such a real multiplication case, as $\chi$ has to be odd. Our condition of not having Hida families (even Galois conjugate ones) cross at non-classical weight 1 specialisations corresponds to \'etaleness at all weight 1 specialisations of the connected components of the eigenvariety containing the Hida families $\mF \in \mS$. 

As far as we know, the geometry at non-classical points has not been studied. 
We can nevertheless exhibit many cases in which there is a unique Hida family, so that our non-crossing assumption is satisfied.
 In particular, \cite{BellaichePollack} calculate that for many irregular pairs $(p,k)$ (irregular in the sense that $p$ divides the Bernoulli number $B_{k}$) one indeed has $$\dim_\Lambda \bfT_\mathfrak{M}=1,\quad \textup{(unique Hida family)} $$ where $\bfT$ is the universal ordinary Hecke algebra of tame level $N=1$ and $\mathfrak{M}$ is a maximal ideal containing the Eisenstein ideal $J$ corresponding to the $\Lambda$-adic Eisenstein series which specializes at the particular weight $k$ to $$E_{k}=-\frac{B_{k}}{2}+ \sum_{n \geq 1} \sigma_{k-1}(n) q^n.$$
This corresponds to the Hecke algebra we considered for $\chi=\omega^{k-1}$.
Furthermore, by  Corollary 5.15 in \cite{Washingtonbook} we know that $p$-divisibility of $B_k$ implies that of $B_1(\omega^{k-1})=-L(0, \omega^{k-1})$. 

Set $N=1$ and fix a finite set $\Sigma$ satisfying assumptions (2) and (3). Since $p \mid B_k$ there exists a representation $\rho_0: G_\Sigma \to \GL_2(\bfF)$ of the form $$\rho_0=\begin{pmatrix}1 & *\\0& \ov{\chi} \end{pmatrix},$$ which does not split, but splits when restricted to $D_p$ (see section \ref{summary}). In this case (as $\chi$ is a power of $\omega$), the existence of $\rho_0$ can also be deduced from  Theorem 1.3 in \cite{Ribet76}.

We discuss the case $(p,k)=(37, 32)$ in detail to demonstrate that all our assumptions are satisfied for $\chi=\omega^{k-1}$. Indeed, since the class number of $F=\bfQ(\chi)=\bfQ(\zeta_{37})$ is 37, we know that $p \|B_1(\chi)$, $C_F^{\chi^{-1}}$ is a cyclic $\Oo$-module and $C_F^\chi=0$. We also have $\ov{\chi}|_{I_p} \neq 1$ as $\chi$ has order 36 and is ramified at $p$.

Theorem \ref{mainthm} therefore shows that $R=\bfT^\Sigma=\Oo$, where $R$ is the universal deformation ring of $\rho_0$ defined in section \ref{deformationproblem} and $\bfT^\Sigma$ is as in section \ref{summary}.  
This implies that there is a unique characteristic zero ordinary deformation of $\rho_0$  with determinant $\chi$, and this corresponds to a non-classical $p$-adic cuspform of weight 1, as $\chi$ is not quadratic (see Remark \ref{CM1}).

An example of a character $\chi$ unramified at $p$ that satisfies our assumptions is the following: there is an odd order 4 character of conductor 157, which is identified by its Conrey number of 28 (see \cite{lmfdb:157.d}). Using Sage \cite{sagemath} one can check that $L(0, \chi)$  is divisible by a prime above $5$ in $\bfQ(i)$, whereas  $L(0, \chi^{-1})$ is a 5-unit. This example (and others) can be found by using \cite{lmfdb} to search for  totally imaginary cyclic extensions with class number 5.

We could not check in this example whether the non-crossing assumption in Theorem \ref{mainthm} is satisfied as the coefficient field of the specialisation in weight 5 has degree 102 over $\bfQ$, so we could not confirm whether there is a unique Galois conjugate of this cuspform of weight 5 and level 157 congruent to $1+\chi$ for a fixed prime above 5.

\section{$R=T$ theorem in the split case} \label{sect8}
In this section we will treat the split case of the deformation problem for odd quadratic characters. We keep all the assumptions of section \ref{The residual representation}.

Let $\chi=\chi_{F/\bfQ}:\Gal(F/\bfQ)\to \bfZ_p^{\times}$ be the quadratic character associated to an imaginary quadratic extension $F/\bfQ$ (so $N={\rm cond}(\chi)=d_F$). We assume $p>2$ is inert in $F/\bfQ$ (to have $\ov{\chi}|_{D_p} \neq 1$). We note that assumption (1) in the case of a quadratic character implies that $C_F^\chi$ is a cyclic $\Oo$-module. This is actually equivalent to assuming that $C_F$ is a cyclic $\Oo$-module, as $\Gal(F/\bfQ)$ acts  on ${\rm Cl}(F)$ via $\chi$.

As before we will write $\tilde{\chi}:(\bfZ/d_F\bfZ)^{\times}\to \bfC^{\times}$ for the Dirichlet character associated with $\chi$. We will also denote by $\ov{\chi}$ the mod $\varpi$ reduction of $\chi$. 
The Dirichlet Class Number Formula and the functional equation imply that $L(0, \tilde{\chi})=2\frac{h_F}{w_F}$,  where $h_F$ is the class number of $F$ and $w_F:=\# \Oo_F^\times$.
If $p \mid h_F$ then there exists a non-split representation $\rho_0:G_\Sigma \to \GL_2(\bfF)$ of the form
 $$\rho_0=\bmat 1&*\\0&\ov{\chi} \emat,$$ which is split on $I_{\ell}$ for all primes, and also split on $D_p$ since $p \Oo_F$ is a principal ideal and therefore splits completely in the Hilbert class field. By Proposition \ref{uniqueness} this representation is unique up to isomorphism.

Write $S_1(d_F, \tilde{\chi})^{\rm CM}$ for the space of weight 1 classical cusp forms of level $d_F$ and character $\tilde{\chi}$ spanned by the set $\mN'$ of newforms with complex multiplication, i.e. such that for $f \in \mN'$ one has $a_\ell(f)\tilde{\chi}(\ell)=a_\ell(f)$ for all primes $\ell$, where $a_{\ell}(f)$ denotes the $T_{\ell}$-eigenvalue corresponding to $f$.  
Suppose that all the forms $f\in \mN'$ are defined over the extension $E/\bfQ_p$.

We define $\bfT^{\rm class}_1$ as the $\Oo-$subalgebra of $\prod_{f \in \mN'}\Oo$ generated by $(a_{\ell}(f))_{f}$ for all primes $\ell \not\in \Sigma$. 
By \cite{Serre77b} section 7.3 (see also \cite{DummiganSpencer} Proposition 2.4) each $f \in \mN'$ has the form $f=f_\varphi$, where $f_\varphi$ is induced from  a non-trivial non-quadratic character $\varphi: {\rm Cl}(F) \cong {\rm Gal}(H/F) \to \bfC^{\times}$ of finite order (for $H$ the Hilbert class field of $F$), with associated Galois representation $$\rho_{f_\varphi}={\rm ind}^\bfQ_F(\varphi)$$ and $\det \rho_{f_\varphi}=\chi$.
We recall that $a_\ell(f_\varphi)=0$ if $\ell$ is inert in $F/\bfQ$, and  
\be \label{CM} a_\ell(f_\varphi)=\varphi(\mathfrak{l})+\varphi(\mathfrak{l}^c) \text{ if } (\ell)= \mathfrak{l} \mathfrak{l}^c. \ee

We define the Eisenstein ideal $J \subset \bfT^{\rm class}_1$ as the ideal generated by $T_\ell -1 -\chi(\ell)$ for $\ell \notin \Sigma$ and $\mathfrak{m}$ the maximal ideal containing $J$.
We also note (see section 4.7 in \cite{Miyake89})  that the space of classical Eisenstein series of weight 1, level $d_F$ and character $\tilde{\chi}$ is spanned by the Eisenstein series $E_1(\tilde{\chi})$ with constant term $\frac{L(0,\tilde{\chi})}{2}$ at infinity and Hecke eigenvalues $1+\tilde{\chi}(\ell)$ for $ \ell \nmid d_F$.
Let $\mN$ be the subset of $\mN'$ of newforms $f$ congruent to $E_1(\tilde{\chi})$. We note that $(\bfT^{\rm class}_1)_\fm$ is naturally a subring of $\prod_{f \in \mN} \Oo$, which is of full rank as an $\Oo$-module.

Let $R^{\rm split}$ be the deformation ring defined in section \ref{deformationproblem}.
\begin{prop} \label{RsurjT}
We have an $\Oo$-algebra surjection $\Phi: R^{\rm split} \twoheadrightarrow (\bfT^{\rm class}_1)_\fm$ mapping ${\rm tr}(\rho^{\rm univ}(\Frob_\ell))$ to $T_\ell$ for all primes $\ell \not\in \Sigma$.
\end{prop}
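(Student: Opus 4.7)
The plan is to associate, to each CM newform $f = f_\varphi \in \mN$, a split deformation of $\rho_0$ valued in $\Oo$, and then to assemble these into the claimed $\Oo$-algebra map. First I would check that $\rho_f = \Ind_F^\bfQ(\varphi)$ is irreducible: because $F$ is imaginary quadratic, complex conjugation acts on $\Cl(F)$ by inversion, so $\varphi^c = \varphi^{-1}$, and non-quadraticness of $\varphi$ forces $\varphi \neq \varphi^c$. The Eisenstein congruence $f \equiv E_1(\tilde\chi) \pmod\varpi$ pins the residual semisimplification of $\rho_f$ to $1 \oplus \ov\chi$, so Ribet's lemma (\cite{Ribet76}, Proposition 2.1) would supply a $G_\Sigma$-stable $\Oo$-lattice whose reduction is a non-split extension of $\ov\chi$ by $1$.

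The crucial observation is that the $D_p$-restriction is already split of the required shape. Since $p$ is inert in $F/\bfQ$, the unique prime of $F$ above $p$ is $\fp = p\Oo_F = (p)$, so $[\fp] = 1$ in $\Cl(F)$ and hence $\varphi|_{D_\fp}$ is the trivial character. Mackey's formula then yields
\[
\rho_f|_{D_p} \;\cong\; \Ind^{D_p}_{D_\fp}(\mathbf{1}) \;\cong\; 1 \oplus \chi|_{D_p},
\]
and in particular $\ov\rho_f|_{I_p} \cong 1 \oplus \ov\chi|_{I_p}$. Proposition \ref{uniqueness} would then identify the residual representation of $\rho_f$ (with respect to the Ribet lattice) with $\rho_0$. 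I would next verify the split deformation conditions from section \ref{deformationproblem} for $\rho_f$: (i) is the fact $\det \rho_f = \chi$ already recorded just after (\ref{CM}); (ii) in its split form is exactly the displayed decomposition, with $\psi_2 = 1$ unramified and trivial mod $\fm$; for (iii), a second application of Mackey together with the fact that $\varphi$ is unramified at every finite place gives $\rho_f|_{I_\ell} = 1 \oplus \chi|_{I_\ell}$ for each $\ell \in \Sigma$, treating the split, inert, and ramified-in-$F$ cases separately.

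By universality of $R^{\rm split}$, each $f \in \mN$ therefore yields an $\Oo$-algebra map $R^{\rm split} \to \Oo$ sending $\tr \rho^{\rm univ}(\Frob_\ell) \mapsto a_\ell(f)$ for $\ell \notin \Sigma$, and I would assemble these into $\Phi: R^{\rm split} \to \prod_{f \in \mN}\Oo$. The image contains the tuples $(a_\ell(f))_{f \in \mN}$ for every $\ell \notin \Sigma$, which are by definition the generators of $(\bfT_1^{\rm class})_\fm$ as an $\Oo$-subalgebra of $\prod_{f \in \mN}\Oo$; combined with Proposition \ref{genbytraces} (which says such traces topologically generate $R^{\rm split}$) this gives both that $\Phi$ lands in $(\bfT_1^{\rm class})_\fm$ and that it surjects onto it. The step I expect to be the main subtlety is the split structure of $\rho_f|_{D_p}$: one really needs the principality of $(p)$ in $\Oo_F$ to guarantee that $\varphi(\Frob_\fp) = 1$ and hence that $\rho_f$ lands in $R^{\rm split}$ rather than merely in $R$; without this number-theoretic input the argument would only produce a surjection out of the larger ordinary ring $R$.
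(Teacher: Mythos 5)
Your proposal is correct and follows essentially the same route as the paper: verify the split deformation conditions for each CM newform $f_\varphi$, invoke universality of $R^{\rm split}$ to get maps to $\Oo$, and conclude surjectivity onto $(\bfT_1^{\rm class})_\fm$ via Proposition \ref{genbytraces}. The only (immaterial) variation is at $p$: the paper deduces splitness of $\rho_{f_\varphi}|_{D_p}$ from $a_p=0$ and the Hecke polynomial $x^2+\tilde\chi(p)=(x-1)(x+1)$ having distinct roots, whereas you use Mackey plus the principality of $p\Oo_F$ to get $\rho_{f_\varphi}|_{D_p}\cong \Ind_{D_\fp}^{D_p}(1)\cong 1\oplus\chi|_{D_p}$ — two phrasings of the same fact — and you are in fact more explicit than the paper about the Ribet-lattice step identifying the residual representation with $\rho_0$.
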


\begin{proof}
We need to check that $\rho_{f_\varphi}$ with $\ov{\rho}_{f_\varphi}^{\rm ss} \cong 1 \oplus \ov{\chi}$ satisfies the deformation conditions. By the universality of $R^{\rm split}$ we then get surjections $R^{\rm split} \to \Oo$, which will induce $R^{\rm split} \twoheadrightarrow (\bfT^{\rm class}_1)_\fm$ as $R^{\rm split}$ is generated by traces (by  Proposition \ref{genbytraces}).

Condition (i) is clear.  For $\ell \mid d_F$  one can check that deformation condition (iii) is satisfied by Mackey theory. Indeed, for $\rho_{f_\varphi}={\rm ind}^\bfQ_F(\varphi)$ this gives ${\rm ind}^\bfQ_F(\varphi)|_{I_\ell}= {\rm ind}^{I_\ell}_{I_\lambda}(\varphi|_{I_\lambda})$ for $(\ell)=\lambda^2$, and therefore ${\rm ind}^\bfQ_F(\varphi)|_{I_\ell}={\rm ind}^{I_\ell}_{I_\lambda}(1)=1 \oplus \chi$.

For (ii) we note that $a_p(f_\varphi)=0$ as $p$ is inert in $F/\bfQ$. 
This means that the Hecke polynomial of an eigenform $f_\varphi$ is $x^2+\tilde{\chi}(p)=(x-1)(x-\tilde{\chi}(p))=(x-1)(x+1)$, which implies that $\rho_{f_\varphi}|_{D_p}$ is split.

\end{proof}

\begin{cor}
The Eisenstein ideal $J \subset (\bfT^{\rm class}_1)_\fm$ is principal.
\end{cor}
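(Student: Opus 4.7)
The plan is to identify $J$ with the image under $\Phi: R^{\rm split} \twoheadrightarrow (\bfT^{\rm class}_1)_\fm$ (from Proposition \ref{RsurjT}) of the reducibility ideal $I^{\rm split}$, which is principal by Theorem \ref{principality}(1). The cyclicity hypothesis needed for that theorem is automatic here: since $\chi$ is quadratic, $C_F^\chi = C_F^{\chi^{-1}}$, and the latter is cyclic by Assumption (1) of section \ref{The residual representation} (cf.\ Remark \ref{exclusions}(i)). Since the image of a principal ideal under a surjection is principal, the whole task reduces to proving $\Phi(I^{\rm split}) = J$.

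For the inclusion $J \subseteq \Phi(I^{\rm split})$, I would reuse the argument from the proof of Proposition \ref{bound on R/I}: any deformation to $R^{\rm split}/I^{\rm split}$ has diagonal characters forced to be $1$ and $\chi$, so $\tr \rho^{\rm split}(\Frob_\ell) \equiv 1 + \chi(\ell) \pmod{I^{\rm split}}$ for every $\ell \notin \Sigma$. Applying $\Phi$ and using $\Phi(\tr \rho^{\rm split}(\Frob_\ell)) = T_\ell$ then gives $T_\ell - 1 - \chi(\ell) \in \Phi(I^{\rm split})$, whence $J \subseteq \Phi(I^{\rm split})$.

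For the reverse inclusion $\Phi(I^{\rm split}) \subseteq J$, consider the pushforward $\rho := \Phi \circ \rho^{\rm split}: G_\Sigma \to \GL_2((\bfT^{\rm class}_1)_\fm)$. Its trace at $\Frob_\ell$ reduces to $1 + \chi(\ell)$ modulo $J$ for all $\ell \notin \Sigma$ by the very definition of $J$, so by Chebotarev $\tr \rho \equiv 1 + \chi \pmod{J}$ as a pseudocharacter on $G_\Sigma$. Since $1 \not\equiv \ov{\chi}$ (as $\ov{\chi}|_{D_p} \neq 1$), Lemma \ref{uniqueness of Ti} ensures that such a decomposition is unique, so the residual trace is genuinely reducible; the universal property of the reducibility ideal then yields $\Phi(I^{\rm split}) \subseteq J$.

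I do not anticipate a serious obstacle. The only subtle step is pinning down the diagonal characters modulo $I^{\rm split}$ as exactly $1$ and $\chi$, but this has already been handled inside the proof of Proposition \ref{bound on R/I}; everything else is formal manipulation of ideals under a surjection together with the application of Theorem \ref{principality}(1).
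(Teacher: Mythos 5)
Your proposal is correct and follows essentially the same route as the paper: the paper likewise identifies $I^{\rm split}$ with the ideal generated by $\tr\rho^{\rm split}(\Frob_\ell)-1-\chi(\Frob_\ell)$ (using the minimality of the reducibility ideal for one inclusion and the determination of the diagonal characters from the proof of Proposition \ref{bound on R/I} for the other), concludes $\Phi(I^{\rm split})=J$, and then invokes Theorem \ref{principality}(1). Your only cosmetic difference is carrying out the two inclusions after applying $\Phi$ rather than first proving $I^{\rm split}=I_0$ inside $R^{\rm split}$.
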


\begin{proof}
As the reducibility ideal $I^{\rm split} \subset R^{\rm split}$  is the smallest ideal $I$ of $R^{\rm split}$ such that ${\rm tr}(\rho^{\rm split}) \mod{I}$ is the sum of two characters, this means that $I^{\rm split}$ is contained in the ideal $I_0$ generated by ${\rm tr}(\rho^{\rm split}(\Frob_\ell))-1-\chi(\Frob_{\ell})$ for $\ell \notin \Sigma$. It follows from the proof of Proposition \ref{bound on R/I} that ${\rm tr}(\rho^{\rm split}) =1 + \chi \mod{I^{\rm split}}$, so $I^{\rm split}=I_0$.

Under $\Phi: R^{\rm split} \twoheadrightarrow (\bfT^{\rm class}_1)_\fm$ the generators of $I^{\rm split}$ map to the generators of $J$, so $\Phi(I^{\rm split})=J$, and the principality of $J$ follows from that of $I^{\rm split}$ (cf. part (1) of Theorem \ref{principality}).
\end{proof}

\subsection{Proving Eisenstein congruences in weight 1}

\begin{thm} \label{TJCM}
We have $\#((\bfT^{\rm class}_1)_\fm/J) \geq \#C_F$.
\end{thm}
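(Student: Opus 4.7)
The plan is to use the principality of $J$ (established in the previous corollary) together with an explicit parametrization of $\mN$ by characters of $\Cl(F)$ to compute $\#(T/J)$ directly, where $T := (\bfT^{\rm class}_1)_\fm$.

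First, I would parametrize $\mN$. By \eqref{CM}, the Eisenstein congruence $a_\ell(f_\varphi) \equiv 1+\tilde{\chi}(\ell)\pmod{\varpi}$ at split primes $\ell$ amounts to $\varphi(\mathfrak{l})+\varphi(\mathfrak{l})^{-1}\equiv 2\pmod{\varpi}$, which forces $\varphi$ to have $p$-power order. Since $p>2$ and $C_F=C_F^\chi$ is cyclic by hypothesis, $\Cl(F)[p^\infty]\cong \bfZ/p^n\bfZ$ for some $n\geq 1$, and the identification $f_\varphi=f_{\varphi^{-1}}$ yields $|\mN|=(p^n-1)/2$ newforms. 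Since the Fourier coefficients of the $f_\varphi$ lie in $\Oo$, we automatically have $\Oo\supseteq \bfZ_p[\zeta_{p^n}]$.

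Next, I would apply the principality of $J$. As $T$ is a torsion-free finite $\Oo$-algebra, it is a free $\Oo$-module of rank $|\mN|$, embedded diagonally in $\prod_{f\in\mN}\Oo$ via $\lambda:=\oplus_f\lambda_f$. Writing $J=(t_0)$ and noting that each $t_{0,f}:=\lambda_f(t_0)$ is nonzero (a cusp form cannot coincide with the Eisenstein series on every Hecke eigenvalue), the endomorphism ``multiplication by $t_0$'' is diagonalized by $\lambda$ with eigenvalues $t_{0,f}$, so the elementary-divisor formula gives
\[
\#(T/J)=\Bigl|\Oo\Big/\textstyle\prod_{f\in\mN}t_{0,f}\Bigr|.
\]
Since $\lambda_f(T)=\Oo$, one has $(t_{0,f})=\lambda_f(J)$, the ideal generated by $\varphi(\mathfrak{l})+\varphi(\mathfrak{l})^{-1}-2$ over split primes $\mathfrak{l}$ with $\ell\notin\Sigma$. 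Chebotarev in the Hilbert class field of $F$ ensures that these classes $[\mathfrak{l}]$ exhaust $\Cl(F)$, so $(t_{0,f})=\bigl(\varphi(c)+\varphi(c)^{-1}-2:c\in\Cl(F)\bigr)$.

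For the explicit computation, I would exploit the identity $\varphi(c)+\varphi(c)^{-1}-2=(\varphi(c)-1)^2\varphi(c)^{-1}$. For $\varphi$ of exact order $p^k$, a class $c$ with $\varphi(c)$ of order $p^k$ minimizes the valuation, giving $v_\varpi(t_{0,f})=2v_\varpi(\zeta_{p^k}-1)=2e_\Oo/(p^{k-1}(p-1))$, where $e_\Oo=v_\varpi(p)$. Since there are $\phi(p^k)/2=p^{k-1}(p-1)/2$ newforms corresponding to characters of exact order $p^k$, summing over $k=1,\dots,n$ yields
\[
v_\varpi\Bigl(\textstyle\prod_{f\in\mN}t_{0,f}\Bigr)=\sum_{k=1}^{n}\frac{p^{k-1}(p-1)}{2}\cdot\frac{2e_\Oo}{p^{k-1}(p-1)}=n\cdot e_\Oo=v_\varpi(p^n).
\]
Hence $\#(T/J)=|\Oo/p^n\Oo|=\#(\Cl(F)[p^\infty]\otimes_\bfZ\Oo)=\#C_F$, proving the theorem (in fact with equality).

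The principal subtlety is the appeal to principality of $J$: without it, multiplication by a specific element of $J$ cannot be decomposed into the local contributions $(t_{0,f})$, and the direct link between the total depth of Eisenstein congruences and the size of $T/J$ is lost. Verifying that Chebotarev truly reaches every class outside the excluded primes in $\Sigma$ is routine but essential, since otherwise the ideal $(t_{0,f})$ might be strictly larger than the computed one.
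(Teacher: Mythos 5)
Your proof is correct and follows essentially the same route as the paper: the identity $\#(T/J)=\#\bigl(\Oo/\prod_{f}t_{0,f}\bigr)$ that you derive from the principality of $J$ is precisely the congruence-module formula (Proposition 4.3 of Berger--Klosin--Kramer) that the paper invokes, and your valuation computation for $\varphi(\mathfrak{l})+\varphi(\mathfrak{l})^{-1}-2$ via the exact order of $\varphi$, together with the count of $\tfrac{1}{2}\phi(p^k)$ conjugate pairs and the telescoping sum, reproduces the paper's argument. The only difference is that you push through to an equality (which needs the Chebotarev points you flag, both to pin down $\mN$ exactly and to realize a class of full order as a split prime outside $\Sigma$), whereas the stated theorem, and the paper's proof, only require the lower bound, for which those points are not needed.
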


\begin{proof}
We put $\bfT:=(\bfT^{\rm class}_1)_\fm$. Note that all $f \in \mN$ are defined over the completion $E'$ of $\bfQ(\mu_{p^n})^+$ at the prime above $p$,  where $p^n \| h_F$. As both sides of the inequality in the statement increase by the same factor if we extend the field $E$ we can and will assume that $E=E'$. The ramification index $e$ of $E$ over $\bfQ_p$ is $\frac{1}{2} \phi(p^n)= \frac{1}{2} p^{n-1}(p-1)$.

From \cite{BergerKlosinKramer14} Proposition 4.3 and the principality of $J$ we deduce the following (noting the correction made in \cite{BergerKlosin19} Remark 5.13 about the missing factor of $[E:\bfQ_p]$).

\begin{prop}\label{BKK}
For every $\Oo$-algebra morphism $\lambda: \bfT \to \Oo$ write $m_{\lambda}$ for the largest integer such that $1+ \chi(\ell) \equiv \lambda(T_\ell)$ mod $\varpi^{m_{\lambda}}$ for all $\ell \notin \Sigma$. 
Then \be \label{41} \frac{[E:\bfQ_p]}{e}\cdot \sum_{\lambda} m_{\lambda} = \val_p(\#\bfT/J).\ee
\end{prop}

We will show that \be \label{totaldepth} \frac{1}{e}\cdot \sum_{\lambda} m_{\lambda} \geq {\rm val}_p(h_F), \ee which together with Proposition \ref{BKK} implies the theorem, as we will now explain:

Indeed these would give us $$\frac{1}{[E:\bfQ_p]}\val_p(\#\bfT/J)=\frac{1}{e}\cdot \sum m_{\lambda}\geq \val_p(h_F) = n.$$ Hence one gets from this:
$$\val_p(\#\bfT/J)\geq [E:\bfQ_p]n=\val_p(\#C_F),$$ as desired.

Thus it remains to prove \eqref{totaldepth}. Consider a character $\varphi: {\rm Cl}(F) \cong {\rm Gal}(H/F) \to \ov{\bfQ}_p^\times$ of exact order $p^m$ for $1 \leq m \leq n$. We note (as in the proof of \cite{DummiganSpencer} Theorem 2.7) that since the values of $\varphi$ are $p^m$-th roots of unity we have $\varphi(\fq) \equiv 1 \mod{\varpi_m}$ for $\varpi_m$ the prime in $\bfQ(\mu_{p^m})$ above $p$ and $\fq$ any ideal of $\OF$. Note that $\varpi^{p^{n-m}}$ is a uniformizer in the completion of $\bfQ(\mu_{p^m})^+$ at the prime ideal above $p$. 

We deduce that 
$(\varphi+\varphi^c)(\fq) \equiv 2=1+\chi(\fq) \mod{\varpi^{p^{n-m}}}$ for any prime $q$ of $\bfZ$ which splits in $\OF$ as $\fq\ov{\fq}$. By \eqref{CM} this tells us that $m_\lambda\geq p^{n-m}$ for $\lambda$ corresponding to $f_\varphi$.

It remains to count how many such cusp forms $f_\varphi$ congruent to $E_1(\tilde{\chi})$ we have. Since $\varphi$ and $\varphi^{-1}=\varphi^c$ induce to the same cusp form we need to count how many (unordered) pairs $\{ \varphi, \varphi^{-1}\}$  with $\varphi$ exact order $p^m$ exist for each $1 \leq m \leq n$. Since we assume that $C_F={\rm Cl}(F) \otimes_{\bfZ} \Oo$ is cyclic, the $p$-part of ${\rm Cl}(F)$ is a cyclic abelian group $G$.

The order $p^m$ characters lie in a unique subgroup  of the character group of $G$ (which is isomorphic to $G$) of order $p^m$, which has $\phi(p^m)$ generators. We therefore have $\frac{1}{2}\phi(p^m)$ pairs $\{ \varphi, \varphi^{-1}\}$ with $\varphi$ exact order $p^m$.

Hence $$\frac{1}{e}\cdot \sum_{\lambda} m_{\lambda} \geq \frac{1}{e} \sum_{m=1}^n \frac{1}{2} \phi(p^m) \cdot p^{n-m}=\frac{2}{\phi(p^n)} \sum_{m=1}^n \frac{1}{2} \phi(p^n)=n.$$
This gives \eqref{totaldepth} and thus concludes the proof of the theorem.
\end{proof}

\begin{rem}
This bound on the congruence module $T/J$ cannot be proved by the usual methods: For the method used e.g. in  \cite{BergerKlosin19} Proposition 5.2 one needs a modular form with constant term a $p$-unit. However, the Eisenstein part of  $M_1(d_K, \tilde{\chi})$ is spanned by $E_1(\tilde{\chi})$, which has $\frac{1}{2}L(0,\tilde \chi)$ as constant term. Deducing the bound from Wiles's result Theorem \ref{Wiles2} is also difficult, as we only know $\bfT \twoheadrightarrow \bfT_1 \twoheadrightarrow \bfT_1^{\rm class}$ and would need to establish classicality of the specialisation in weight 1. In addition, one would need to show (for the splitting of the associated Galois representation at $p$) that the specialisation is a $p$-stabilisation of a form of level $d_F$.

\end{rem}

We obtain the following $R=T$ theorem in the split case:
\begin{thm} \label{CMresult}
Consider $F/\bfQ$ an imaginary quadratic field and $p>2$ inert in $F/\bfQ$ dividing the class number of $F$. Assume that $C_F$ is a cyclic $\Oo$-module (and assumptions (2) and (3) in section \ref{The residual representation}).
Then the map $\Phi: R^{\rm split} \to (\bfT^{\rm class}_1)_\fm$ in Proposition \ref{RsurjT} is an isomorphism.
\end{thm}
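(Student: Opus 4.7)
The plan is to assemble the ingredients already developed — the surjection $\Phi$ of Proposition \ref{RsurjT}, the principality of $I^{\rm split}$ (Theorem \ref{principality}(1)), the upper bound on $R^{\rm split}/I^{\rm split}$ (Proposition \ref{bound on R/I}), and the lower bound on the Hecke congruence module (Theorem \ref{TJCM}) — and feed them into the commutative algebra criterion of \cite{BergerKlosin11}, Theorem 6.9, exactly as was done in the proof of Theorem \ref{mainthm}.

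First I would match up the two sides of the quotient. Since $\chi$ is the quadratic character associated to $F/\bfQ$, we have $\chi^{-1}=\chi$; moreover the nontrivial element $\sigma\in\Gal(F/\bfQ)$ acts on $\Cl(F)$ by $\fa\mapsto\fa^\sigma=\fa^{-1}$ (because $\fa\fa^\sigma=N_{F/\bfQ}(\fa)$ is principal), so $\Gal(F/\bfQ)$ acts on $\Cl(F)$ through $\chi$ and hence $C_F=C_F^\chi=C_F^{\chi^{-1}}$. The cyclicity hypothesis on $C_F$ therefore gives simultaneously the cyclicity of $C_F^{\chi^{-1}}$ (assumption (1) of section \ref{The residual representation}) and the cyclicity of $C_F^\chi$ (the hypothesis of Theorem \ref{principality}(1)), yielding the principality of $I^{\rm split}$. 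Proposition \ref{bound on R/I} then gives
\[
\#R^{\rm split}/I^{\rm split}\leq \#C_F^{\chi^{-1}}=\#C_F,
\]
while Theorem \ref{TJCM} gives $\#(\bfT^{\rm class}_1)_\fm/J\geq \#C_F$.

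Next I would note that by the corollary following Proposition \ref{RsurjT} we have $\Phi(I^{\rm split})=J$, so $\Phi$ descends to a surjection $R^{\rm split}/I^{\rm split}\twoheadrightarrow (\bfT^{\rm class}_1)_\fm/J$. Combined with the two inequalities above, all three quantities coincide with $\#C_F$ and the induced map on quotients is an isomorphism. Finally, I would apply Theorem 6.9 of \cite{BergerKlosin11} to the commutative square
\[
\xymatrix{R^{\rm split}\ar[r]^{\Phi}\ar[d] & (\bfT^{\rm class}_1)_\fm\ar[d]\\
R^{\rm split}/I^{\rm split}\ar[r]^-{\sim} & (\bfT^{\rm class}_1)_\fm/J}
\]
whose top arrow is surjective, whose bottom arrow is an isomorphism, and whose left-hand kernel is the principal ideal $I^{\rm split}$; this yields that $\Phi$ itself is an isomorphism.

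All the substantive work has already been carried out: the true obstacles were Theorem \ref{TJCM} (lower-bounding the congruence module by directly counting Eisenstein congruences arising from CM inductions $f_\varphi$, using the depth estimate $\varphi(\fq)\equiv 1\bmod\varpi_m$ and the Berger–Klosin–Kramer formula) and Theorem \ref{principality}(1) (principality of $I^{\rm split}$). Once these are in hand, the present statement is essentially a formal numerical match followed by invocation of the commutative algebra criterion, so no further technical difficulty is expected.
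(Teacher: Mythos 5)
Your proposal is correct and follows essentially the same route as the paper: the paper's proof likewise combines the surjectivity of $\Phi$ from Proposition \ref{RsurjT}, the isomorphism on reducible/Eisenstein quotients obtained by playing Proposition \ref{bound on R/I} against Theorem \ref{TJCM}, the principality of $I^{\rm split}$ from Theorem \ref{principality}(1), and Theorem 6.9 of \cite{BergerKlosin11}. Your preliminary observation that $C_F=C_F^{\chi}=C_F^{\chi^{-1}}$ because $\Gal(F/\bfQ)$ acts on $\Cl(F)$ through $\chi$ is exactly the remark the paper makes at the start of its section on the split case.
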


\begin{proof} By Proposition \ref{RsurjT} and the fact that $\Phi(I^{\rm split}) \subset J$  we get the following commutative diagram 
$$\xymatrix{R^{\rm split} \ar[r]^{\Phi}\ar[d] & (\bfT_1^{\rm class})_{\fm}\ar[d]\\R^{\rm split}/I^{\rm split} \ar[r] & (\bfT_1^{\rm class})_{\fm}/J}$$ where the top map is surjective. By combining Proposition \ref{bound on R/I} and Theorem \ref{TJCM} we see that the bottom arrow is an isomorphism. We can then apply Theorem 6.9 in \cite{BergerKlosin11} to conclude that $\Phi$ is also an isomorphism noting that $I^{\rm split}$ is principal by part (1) of Proposition \ref{principality}.
\end{proof}

\begin{rem}
This result complements the work of Castella and Wang-Erickson \cite{CastellaWangErickson21} on Greenberg's conjecture for ordinary cuspidal eigenforms $f$, who prove in the residually irreducible case that $\rho_f$ is split at $p$ if and only if $f$ is CM.
\end{rem}

Our theorem implies the following equivalence:
\begin{cor} \label{lastcorollary}
Under the assumptions of section \ref{The residual representation} let $\rho: G_\Sigma \to \GL_2(\Oo)$ be an  ordinary deformation of $\rho_0$, and assume that $\chi$ is unramified at $p$. Then $\rho$ is modular by a classical weight 1 form if and only if $\rho$ is unramified at $p$  and $\chi$ is quadratic.
\end{cor}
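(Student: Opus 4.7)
The plan is to derive both implications of the corollary directly from Theorem \ref{CMresult}, using the finite-image dichotomy for weight one modular Galois representations recalled in Remark \ref{CM1}.

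For the direction ($\Leftarrow$), suppose $\chi$ is quadratic and $\rho$ is unramified at $p$. I would first check that the hypotheses of Theorem \ref{CMresult} are in force: with $\chi$ quadratic and unramified at $p$, the condition $\ov\chi|_{D_p}\neq 1$ forces $p$ to be inert in $F=\bfQ(\chi)$, and since $\Gal(F/\bfQ)$ acts on $\Cl(F)$ by inversion (equivalently, via $\chi$), we have $C_F=C_F^\chi=C_F^{\chi^{-1}}$; so assumption (1) of section \ref{The residual representation} translates into $p\mid h_F$ together with the $\Oo$-cyclicity of $C_F$. Next I would upgrade $\rho$ from ordinary to split at $p$: since $\rho|_{D_p}$ factors through the unramified quotient $D_p/I_p$, the characteristic polynomial of $\rho(\Frob_p)$ reduces modulo $\fm$ to $(X-1)(X+1)$, whose roots are distinct, so Hensel's Lemma diagonalizes $\rho(\Frob_p)$ and $\rho|_{D_p}$ is split. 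Hence $\rho$ is a split deformation of $\rho_0$, and Theorem \ref{CMresult} identifies it with a classical weight one CM form.

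For the direction ($\Rightarrow$), assume $\rho=\rho_f$ for some classical weight one form $f$. By Deligne--Serre, $\rho$ has finite image; since $\rho_0$ is a non-split extension, any finite-image $\rho$ lifting it must have non-abelian image (a finite abelian subgroup of $\GL_2(\Oo)$ would force $\rho$, and hence $\rho_0$, to be semisimple by Maschke). Combined with residual reducibility, this forces (as recalled in Remark \ref{CM1} following the argument of \cite{DummiganSpencer}) the image to be dihedral and $\rho=\Ind_{G_F}^{G_\bfQ}(\varphi)$ for some imaginary quadratic $F/\bfQ$ with $\chi=\chi_{F/\bfQ}$ and a finite-order character $\varphi$ of $G_F$; in particular $\chi$ is quadratic. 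To show $\rho$ is unramified at $p$, note that since $\chi$ is unramified at $p$, the prime $p$ is inert in $F$ with unique prime $\lambda$ above, $F_\lambda/\bfQ_p$ is unramified, and $I_p=I_\lambda\subset D_\lambda$. Mackey's formula gives $\rho|_{D_p}\cong \Ind_{D_\lambda}^{D_p}(\varphi|_{D_\lambda})$, and the ordinary deformation condition (ii) produces an unramified quotient character $\psi_2$ of $\rho|_{D_p}$. By Frobenius reciprocity this quotient corresponds to a nonzero $D_\lambda$-equivariant map $\varphi|_{D_\lambda}\to\psi_2|_{D_\lambda}$, i.e.\ $\varphi|_{D_\lambda}=\psi_2|_{D_\lambda}$, which is unramified; hence both $\varphi$ and its conjugate $\varphi^c$ are trivial on $I_p=I_\lambda$, so $\rho|_{I_p}=1$.

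The whole argument is a reduction to Theorem \ref{CMresult} together with the dihedral description recalled in Remark \ref{CM1}. The only step of genuine content is the Frobenius reciprocity computation in the forward direction, which extracts unramifiedness of $\rho$ at $p$ from the ordinary condition applied to the induced representation at the inert prime; the rest is routine book-keeping with the hypotheses of section \ref{The residual representation}.
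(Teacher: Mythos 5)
Your proof is correct and follows essentially the same route as the paper: both directions reduce to Theorem \ref{CMresult} together with the dihedral classification recalled in Remark \ref{CM1}/\ref{rem5.5}, and your Hensel's-lemma upgrade from unramified to split at $D_p$ is exactly the paper's first observation. The only (inessential) difference is how unramifiedness at $p$ is extracted in the forward direction: the paper notes that finite image forces $\rho|_{D_p}$ to be semisimple, hence split, and then uses that $\det\rho=\chi$ and $\psi_2$ are unramified, whereas you run Mackey plus Frobenius reciprocity on the induced representation at the inert prime --- both arguments are valid.
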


\begin{proof}
First note that $\rho$ unramified at $p$  is equivalent to $\rho|_{D_p}$ being split under our assumptions. Indeed,
if $\rho$ is unramified then $p$-distinguishedness forces $\ov{\rho}(\Frob_p)$ to have distinct eigenvalues and hence $\rho|_{D_p}$ is split.

We now assume that $\rho|_{D_p}$ is split  and $\chi$ is quadratic. As we assume that $\chi$ is unramified at $p$ and $\chi|_{D_p} \neq 1$ we deduce that $p$ is inert in the imaginary quadratic extension, which is the splitting field of $\chi$. By Theorem \ref{CMresult} we conclude that $\rho$ is modular by a classical weight 1 form.

Conversely, if $\rho$ is modular by a classical weight 1 form then by Remark \ref{rem5.5} we know that $\chi$ is quadratic and that $\rho(G_\Sigma)$ is finite hence, in particular, $\rho$ is split on $D_p$.
\end{proof}

\bibliographystyle{amsalpha}
\bibliography{standard2}

\end{document}